\documentclass[11pt]{article}
\usepackage{amssymb,amsmath,verbatim,amsthm}
\usepackage{booktabs,makecell}
\newtheorem{Theorem}{Theorem}[section]
\newtheorem{Lemma}[Theorem]{Lemma}
\newtheorem{Proposition}[Theorem]{Proposition}

\newtheorem{Example}[Theorem]{Example}

\newtheoremstyle{noparens}
  {}{}
  {\itshape}{}
  {\bfseries}{.}
  { }
  {\thmname{#1}\thmnumber{ #2}\mdseries\thmnote{ #3}}
\theoremstyle{noparens}
\usepackage{enumerate}
\usepackage{color,xcolor}
\usepackage{indentfirst}
\newcommand{\ignore}[1]{}

\begin{document}

\renewcommand{\theequation}{\thesection.\arabic{equation}}

\title{\bf One-factorizations of complete multipartite graphs with distance constraints\footnote{Supported
 by National Natural Science Foundation of China (12571346, 12371326).}}
 \author{
 {\small Yuli  Tan,}  {\small  Junling  Zhou}\footnote{Corresponding Author}\\
 {\small School of Mathematics and Statistics}\\ {\small Beijing Jiaotong University}\\
  {\small Beijing  100044, China}\\
 {\small YL.Tan@bjtu.edu.cn}\\
{\small jlzhou@bjtu.edu.cn}\\
 \and {\small Tuvi Etzion}\\
{\small Department of Computer Science}\\
{\small Technion} \\ %Israel Institute of Technology} \\
 {\small Haifa 3200003, Israel}\\
{\small etzion@cs.technion.ac.il}
}
\date{ }
\maketitle

\begin{abstract}
The present paper considers multipartite graphs from the perspective of design theory and coding theory. A one-factor $F$ of the complete multipartite graph $K_{n\times g}$ (with~$n$ parts of size~$g$) gives rise to a $(g+1)$-ary code ${\cal C}$ of length $n$ and constant weight two. Furthermore, if the one-factor $F$ meets a certain constraint,  then ${\cal C}$ becomes an optimal code with minimum distance three.
 We initiate the study of one-factorizations of complete multipartite graphs subject to distance constraints. The problem of decomposing  $K_{n\times g}$ into the largest subgraphs with minimum distance three is investigated.  It is proved that, for $n\le g$,  the complete multipartite graph $K_{n\times g}$ can be decomposed into $g^2$ copies of the largest subgraphs  with minimum distance three. For even $gn$ with $n>g$, it is proved that the complete multipartite graph $K_{n\times g}$ can be decomposed into $g(n-1)$ one-factors  with minimum distance three, leaving a small gap of $n$ (in terms of $g$) to be resolved
 (If $gn$ is odd when $n>g$, no such decomposition of $K_{n\times g}$ exists).

\medskip

\noindent {\bf Keywords}: one-factorization, near one-factorization, multipartite graph, constant-weight code
\medskip
\end{abstract}
%%%%%%%%%%%%%%%%%%%%%%%%%%%%%%%%%%%%%%%%%%%%%%%%%%%%%%%%%%%%%%%%%%%%%%%%%

\section{Introduction and preliminaries}

One-factors and one-factorizations of graphs arise naturally in tournament applications and they occur as building blocks in many combinatorial designs and structures.
 The relationships to Steiner triple systems and various Latin squares are canonical topics which stirred up research interest in one-factorizations \cite{new1, new2, new, RS, [141]}. Special types of one-factorizations have been studied extensively, such as
cyclic, perfect, indecomposable, and orthogonal  one-factorizations~\cite{OFbook}. This paper will consider one-factorizations of complete multipartite graphs subject to distance constraints. %where distance is one of the most important ingredients in coding theory.

%Two major application areas are combinatorial arrays and tournaments

%\cite{[85],Anderson ([6],[9], [40]}.

%[6] B. A. Anderson, Finite topologies and Hamiltonian paths. Journal of
%Combinatorial Theory 14B (1973), 87-93.
%[9] B. A. Anderson, Some perfect one-factorizations. Congr:essus Numemntium
%17 (1976), 79-91.

%Constant-weight codes (CWCs) are an important and fascinating class of codes. CWCs with a fixed distance possess error detection and error correction capabilities \cite{code-impo1}. They can improve the reliability of communication on digital channels \cite{apply1} and be used to design oligonucleotide sequences for DNA computing \cite{apply2,apply3}.

We consider decomposition of multipartite graphs from the perspective of constant-weight codes. A one-factor  $F$ of a complete multipartite graph $K_{n\times g}$ (with $n$ parts of size $g$) gives rise
 to a $(g+1)$-ary code ${\cal C}$ of length $n$ and constant  weight two. Furthermore, if the one-factor~$F$ meets a certain constraint, %namely, a distance constraint,
 then  ${\cal C}$ becomes an optimal code with minimum distance three.
 % If the code ${\cal C}$ is optimal (with the largest size) and if it has minimum distance three,  then it corresponds to a one-factor of $K_{n\times g}$ (but with distance constraint).
%For example, an optimal $(g+1)$-ary code of length $n$, constant weight two and minimum distance three with $n> g$ gives rise to a one-factor of a complete multipartite graph $K_{n\times g}$ (with $n$ parts of size $g$).
%On the other hand, a one-factor of a complete multipartite graph $K_{n\times g}$ with some constraint gives rise to a $(g+1)$-ary code of length $n$ and constant weight two.
Similarly we will have that a one-factorization of  $K_{n\times g}$ is a partition of  the set of all  words of length $n$ with weight two over an alphabet of size $q=g+1$ into optimal  $q$-ary codes with minimum distance three. %This motivates to study the problem of  one-factorizations of  complete multipartite graphs with distance constraints. %Let us introduce some formal definitions, common notations, and well-known results in graph theory and coding theory, which will be used in the paper.

A {\em  multipartite graph}, or an  {\em $n$-partite graph}, is  a  graph with vertices partitioned into $n$ parts such that no two vertices within the same part are adjacent. If $n=2$, this defines a {\em bipartite graph.}
A {\em complete}  multipartite graph   is a multipartite graph  such that every pair of vertices in two different parts are adjacent.
The complete multipartite graph with $n$ parts of size $g$ is denoted by $K_{n\times g}$.
By this definition, the complete graph $K_n$ is $K_{n\times 1}$.

A graph $G$ is {\em $r$-regular} if every vertex of $G$ has degree $r$.  It is {\em almost $r$-regular} if
%the degree of each vertex is at most $1$.
every vertex of $G$ has degree $r$ or $r-1$.
A {\em one-factor} is a 1-regular subgraph of $G$. %A one-factor is also called a {\em perfect matching}. A  {\em one-factorization} of $G$ is a set of one-factors of $G$ which are pairwise edge-disjoint  and whose union is all of $G$.In other words, a
A {\em{one-factorization}} of~$G$ is a decomposition of the edge set of~$G$ into edge-disjoint
one-factors. A {\em near one-factor} of~$G$ is a subgraph, which  has one isolated vertex and all other vertices have degree $1$.
A set of near one-factors
which covers every edge of $G$ precisely once is called a {\em near one-factorization} of $G$.
For simplicity, we usually denote a graph solely by its edge set.

%Another approach to the study of one-factors is through matchings.

Let $a,b\in \mathbb{Z}^{+}$ such that $a<b$. The closed integer interval $[a,b]$ is defined as:
\begin{equation*}\label{E00}
[a,b]=\{x\in\mathbb{Z}^{+}:a\leq x \leq b\}.
\end{equation*}
Specifically, the interval $[1,n]$ is abbreviated as $[n]$ for $n\in \mathbb{Z}^{+}$.
%Let $x,y,n\in\mathbb{Z}^{+}$. The notation
%$x+y\in[n]\pmod{n}$ used throughout represents $x+y$ is reduced modulo $n$ to lie in $[n]$.
For a complete graph $K_{2n}$, take the set of vertices to be $[2n-1]\cup\{\infty\}$. For $j\in [2n-1]$, define
$${F}_{j}=\big\{\{\infty,j\},\left\{j+1,j-1\right\},\left\{j+2,j-2\right\},\ldots,\left\{j+n-1,j-n+1\right\}\big\},$$
where $j+i$ and $j-i$ ($i\in[n-1]$) are reduced by modulo $2n-1$ to lie in $[2n-1]$.
It is not difficult to see that  $\big \{{F} _{1},{F} _{2}, \ldots, {F} _{2n-1}\big\}$ forms a one-factorization of $K_{2n}$. A near one-factorization of~$K_{2n-1}$ can be constructed from a one-factorizations of $K_{2n}$ by deleting a single vertex and its incident edges from $K_{2n}$.
For the complete multipartite graph $K_{n\times g}$, it is also well-known that there exists a  one-factorization if and only if $gn$ is even \cite{hamilton}.

For a positive integer $q$, denote  by $\mathbb{Z}_{q}$ the additive group of integers modulo $q$.
Let $\mathbb{Z}_{q}^{n}$ be the set of all words of length $n$ over the alphabet $\mathbb{Z}_{q}$.
A word $\mathbf{x}\in \mathbb{Z}_{q}^{n}$ is denoted by $\mathbf{x} = (x_{1},x_{2},\ldots,x_{n})$. The {\em{(Hamming) distance}} between two words $\mathbf{x}$ and $\mathbf{y}$ is defined as:
$$d(\mathbf{x},\mathbf{y})= |\{i\in [n]:x_{i}\neq y_{i}\}|.$$  The {\em{weight}} of a word $\mathbf{x}$ is the number of nonzero entries in $\mathbf{x}$, i.e., $d(\mathbf{x},\mathbf{0})$.
 The {\em support} of $\mathbf{x}$, denoted by $\text{supp}(\mathbf{x})$, is the set of coordinate positions $i\in[n]$ whose entry $x_{i}$ is nonzero.

Let $\mathcal{H}_{q}(n,w)$ denote the set of all words of length $n$ and weight $w$ over  $\mathbb{Z}_{q}$. A {\em $q$-ary constant-weight code} $\mathcal{C}$ of length $n$, weight $w$ and distance $d$, denoted $(n,d,w)_{q}$-code, is a nonempty subset of $\mathcal{H}_{q}(n,w)$ such that $d(\mathbf{x},\mathbf{y})\geq d$ for all distinct $\mathbf{x},\mathbf{y}\in \mathcal{C}$. Every element of $\mathcal{C}$ is called a {\em{codeword}}.
%The number of codewords in an $(n,d,w)_{q}$-code is called the {\em size} of the code.
The maximum size of an $(n,d,w)_{q}$-code is denoted $A_{q}(n,d,w)$. An {\em optimal}~$(n,d,w)_{q}$-code is an $(n,d,w)_{q}$-code having $A_{q}(n,d,w)$ codewords.
The fundamental problem in coding theory is that of determining $A_{q}(n,d,w)$ and the relevant values in our exposition are as follows.  Throughout the paper, we assume that $q=g+1$.
%However, it is not very hard for weight $w=2$.
\clearpage

\begin{Lemma}[\label{bound_A_q_n2w}\cite{bound_d2}]
\mbox{}
\begin{itemize}
\item[$(1)$] $A_{q}(n,2,w)={{n}\choose{w}}g^{w-1}.$
\item[$(2)$] $A_{q}(n,2w,w)=\lfloor\frac{n}{w}\rfloor.$
\end{itemize}
\end{Lemma}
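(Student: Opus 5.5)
For part $(1)$ we prove matching upper and lower bounds. For the upper bound, partition $\mathcal{H}_{q}(n,w)$ into classes according to the support together with the entries in all but one distinguished coordinate. Concretely, fix the support $S$ with $|S|=w$, let $i$ be its largest element, and group the words with support $S$ by their restriction to $S\setminus\{i\}$. Each class has exactly $g=q-1$ words, and any two of them differ only in coordinate $i$, so they are at distance $1$. A code with minimum distance $2$ therefore contains at most one word from each class. There are ${n\choose w}g^{w-1}$ classes, which gives the upper bound.

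For the lower bound we take
$$\mathcal{C}=\Big\{\mathbf{x}\in\mathcal{H}_{q}(n,w):\sum_{j}x_j\equiv 0 \pmod q\Big\}.$$
This code picks exactly one word from each class. Given the $w-1$ nonzero entries on $S\setminus\{i\}$, the entry $x_i$ is forced to equal minus their sum. The real obstacle is that this sum may be $0$, in which case $x_i$ would be $0$ and the word would not have weight $w$. So I would replace the pure checksum with a direct construction. For each support $S$ with largest element $i$, and each choice of nonzero entries on $S\setminus\{i\}$, define $x_i$ as a function of that choice such that changing exactly one entry on $S\setminus\{i\}$ always changes $x_i$. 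One way is to identify the nonzero symbols with $\mathbb{Z}_g$ (via $a\mapsto a-1$) and let $x_i-1$ be the sum of the other $x_j-1$ modulo $g$. Then $x_i$ is always nonzero.

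With this choice, distance at least $2$ holds. Two words with different supports differ in at least two coordinates, since both have weight $w$. Two words with the same support, if they differ in exactly one position of $S\setminus\{i\}$, also differ in position $i$ because of the mod-$g$ sum. Otherwise they already differ in at least two positions, or they agree on $S\setminus\{i\}$ and are then the same word. The case $w=1$ is trivial: the bound gives $n$, achieved by the unit vectors.

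For part $(2)$, two weight-$w$ words are at distance $2w$ only if their supports are disjoint. Indeed, $d(\mathbf{x},\mathbf{y})\le |\mathrm{supp}(\mathbf{x})\cup\mathrm{supp}(\mathbf{y})|$, and this is at most $2w-1$ when the supports meet. A code with distance $2w$ is therefore a family of pairwise disjoint $w$-subsets of $[n]$, so it has at most $\lfloor n/w\rfloor$ words. Conversely, taking words with all entries $1$ on $\lfloor n/w\rfloor$ disjoint blocks of size $w$ gives a code attaining this bound.
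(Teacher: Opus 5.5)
Your proof is correct. The paper gives no proof of this lemma and simply quotes it from \cite{bound_d2}, so your argument is a self-contained replacement for the citation rather than a variant of an argument in the text.

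In part $(1)$, you partition $\mathcal{H}_q(n,w)$ into $\binom{n}{w}g^{w-1}$ classes of $g$ words each, with words in a class pairwise at distance $1$; pigeonhole then gives the upper bound. For the matching construction, your rule $x_i-1\equiv\sum_{j\in S\setminus\{i\}}(x_j-1)\pmod g$ on the nonzero symbols correctly avoids the problem that a plain checksum modulo $q$ can force $x_i=0$ when $w\ge 3$. In part $(2)$, the inequality $d(\mathbf{x},\mathbf{y})\le|\mathrm{supp}(\mathbf{x})\cup\mathrm{supp}(\mathbf{y})|$ reduces the problem to packing pairwise disjoint $w$-subsets of $[n]$.

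Your construction also gives something the citation does not. For $w=2$, your code is exactly the graph $H_g$ in the proof of Theorem~\ref{d2}. More generally, you can replace the rule by $x_i-1\equiv k+\sum_{j\in S\setminus\{i\}}(x_j-1)\pmod g$ for $k\in\mathbb{Z}_g$. Each class then contains exactly one word of each of the resulting $g$ codes, so these optimal distance-$2$ codes partition $\mathcal{H}_q(n,w)$. This extends the decomposition of Theorem~\ref{d2} to arbitrary weight $w$.
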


\begin{Lemma}[\label{n32-bound}\cite{Chee-2007}] %Let $q=g+1$.
%We have that
$$A_{q}(n,3,2)=\min\left\{\left\lfloor\frac{gn}{2}\right\rfloor,{n \choose 2}\right\}=
\begin{cases}
\left\lfloor\frac{gn}{2}\right\rfloor,& \text{ if } n>g,\\
{n \choose 2},& \text{ if } n\le g.\\
\end{cases}
$$
\end{Lemma}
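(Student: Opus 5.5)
We prove the upper bound and then build codes attaining it. Throughout, identify a word of weight two in $\mathcal{H}_q(n,2)$ with the edge of $K_{n\times g}$ joining vertex $(i,x_i)$ to vertex $(j,x_j)$, where $\{i,j\}$ is its support. Here the vertex set is $[n]\times(\mathbb{Z}_q\setminus\{0\})$, and part $i$ is $\{i\}\times(\mathbb{Z}_q\setminus\{0\})$.

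\emph{What distance three means.} Two distinct weight-two words are at distance at most $4$. They are at distance at most $2$ exactly in the following cases:
\begin{itemize}
\item[$\bullet$] they share a nonzero entry in a common support coordinate, so the edges share a vertex;
\item[$\bullet$] they have the same support $\{i,j\}$, so the edges join the same two parts.
\end{itemize}
To see this, suppose the words have equal support and differ in both coordinates; then the distance is $2$. If they share one support coordinate with different entries there, the distance is $1+1+1=3$. With equal entries there, the distance is $2$. With disjoint supports the distance is $4$. Hence an $(n,3,2)_q$-code is precisely a matching in $K_{n\times g}$ that uses at most one edge between each pair of parts.

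\emph{Upper bound.} A matching in $K_{n\times g}$ has at most $\lfloor gn/2\rfloor$ edges, since there are $gn$ vertices. The pair-of-parts condition allows at most $\binom{n}{2}$ edges. Thus $A_q(n,3,2)\le\min\{\lfloor gn/2\rfloor,\binom n2\}$. Moreover, $\lfloor gn/2\rfloor\ge\binom n2$ if and only if $g\ge n-1$, that is, $n\le g+1$. At $n=g+1$ both quantities equal $\binom n2$, so the case split in the statement is consistent.

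\emph{Lower bound.} We must find a matching that uses distinct part-pairs. This is the same as a simple graph $H$ on $[n]$ (the part-pairs used) together with an assignment, at each vertex $i$ of $H$, of distinct symbols from $[g]$ to the edges incident with $i$. Such an assignment exists if and only if $\deg_H(i)\le g$ for all $i$: list the edges at each vertex and label them $1,2,\dots$.
\begin{itemize}
\item[$\bullet$] If $n\le g$, take $H=K_n$. Its degrees are $n-1<g$, which gives $\binom n2$ codewords.
\item[$\bullet$] If $n>g$, we need a simple graph on $n$ vertices with maximum degree at most $g\le n-1$ and $\lfloor gn/2\rfloor$ edges. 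If $gn$ is even, take a $g$-regular graph on $n$ vertices. One choice is the circulant graph on $\mathbb{Z}_n$ with connection set $\{\pm1,\dots,\pm\lfloor g/2\rfloor\}$, together with the perfect matching $\{x,x+n/2\}$ when $g$ is odd (then $n$ is even). If $gn$ is odd, take the circulant graph for $g-1$ and add a near-perfect matching $\{x,x+(n-1)/2\}$-type edges avoiding one vertex. This gives $(g-1)n/2+(n-1)/2=\lfloor gn/2\rfloor$ edges, with all degrees at most $g$.
\end{itemize}

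\emph{Main obstacle.} The delicate step is the odd case: we must check that the added matching edges are distinct from the circulant edges. Their differences must exceed $\lfloor (g-1)/2\rfloor$. Because $n>g$, the difference $(n-1)/2$ (or $n/2$) is large enough. Alternatively, we can invoke the standard existence of a graph with a prescribed degree sequence, with degrees $g$ everywhere except one vertex of degree $g-1$; that degree sum is even.
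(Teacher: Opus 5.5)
Your proof is correct. The paper itself does not prove this lemma; it quotes it from Chee et al. The paper only uses one consequence, in Section 2: an optimal $(n,3,2)_q$-code is an almost $1$-regular subgraph of $K_{n\times g}$ without parallel edges. Your distance analysis rederives exactly that characterization, since for a matching ``at most one edge between each pair of parts'' is the same as ``no parallel edges''. Your upper bound is the natural double count.

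For the lower bound, you reduce to a simple graph $H$ on $[n]$ of maximum degree at most $g$, labelled independently at each vertex. This is a common generalization of the paper's Strategies A and B:
\begin{itemize}
\item Strategy A is the special case where $H$ is a union of $s$ edge-disjoint (near) one-factors of $K_n$. Its property (A) is precisely your distinct-labels-at-each-vertex condition.
\item Strategy B takes $H$ to be a union of difference classes $D_i$, with tail label $a_r$ and head label $b_r$.
\end{itemize}

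What your version buys is generality and elementarity. It covers every parity, including $gn$ odd, which neither strategy reaches. For instance, a union of $g$ near one-factors of $K_n$ has only $g(n-1)/2<(gn-1)/2$ edges when $g>1$. The paper never needs that case, because no decomposition exists there. What the paper's rigid strategies buy is the structure needed to partition all of $K_{n\times g}$ into such codes, which your unstructured labelling does not provide.

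One detail should be made explicit in the odd case. Since $\gcd(\frac{n-1}{2},n)=1$, the edges of difference $\frac{n-1}{2}$ form a single Hamiltonian cycle of odd length $n$. Taking alternate edges gives the required near-perfect matching. These edges have cyclic length $\frac{n-1}{2}>\frac{g-1}{2}$, hence avoid the circulant.
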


Throughout the paper, let all $n$-partite graphs be defined over the vertex set $V=[n]\times [g]$ with the $n$ parts $P_x=\{x\}\times [g], x\in [n]$, unless otherwise stated.
%If an ordered pair $(x,a)$ represents a point in $K_{n\times g}$, $x$ always reduced modulo $n$ in $[n]$ and $a$ always reduced modulo $g$ in $[g]$.
%Let $G=(V,E)$ be such a multipartite graph.
%with $n$ equi-parts each of size $g$. Denote the vertices by $V=[n]\times [g]$ and the $n$ parts by $P_i=\{i\}\times [g], i\in [g]$.
If arithmetic operations are taken in the expression of a vertex in $V$, then note that the result lies in $V$, usually with the operation for the first component reduced modulo $n$ to lie in $[n]$ and the second reduced modulo $g$ to lie in $[g]$, unless otherwise specified.
Each edge  $e\in K_{n\times g}$, consisting of two vertices~$(x,a),(y,b)$, defines a word $\mathbf{c}_e\in {\cal H}_q(n,2)$, whose $x$-th coordinate is $a$, $y$-th coordinate is~$b$, and all other coordinates are zeros.
As a result, an $n$-partite graph $G$ is equivalent to a constant-weight code $\cal C$ which contains certain 2-weight words.
This establishes a one-to-one correspondence between  multipartite graphs $G$ and  $q$-ary codes  ${\cal C}$ of constant weight two. The  multipartite graph $G$ is said to have {\em distance} $d$ if its associated code  ${\cal C}$ has distance $d$. If the complete multipartite graph $K_{n\times g}$ can be decomposed into the largest subgraphs with a fixed distance $d$, we say that it has an {\em optimal decomposition} with respect to distance $d$. In other words, an optimal decomposition of $K_{n\times g}$ with respect to distance $d$ is a partition of $K_{n\times g}$ into its subgraphs, each of size $A_{q}(n,d,2)$ and with distance $d$.
When $d=2$ or $d=4$, this decomposition is very simple.

\begin{Theorem}\label{d2}
There exists an optimal decomposition of $K_{n\times g}$ with respect to distance $d=2$.
\end{Theorem}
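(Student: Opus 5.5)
Using Lemma \ref{bound_A_q_n2w}(1) with $w=2$, an optimal code has $A_q(n,2,2)=\binom{n}{2}g$ codewords. The graph $K_{n\times g}$ has $\binom{n}{2}g^2$ edges, so an optimal decomposition with respect to $d=2$ must consist of exactly $g$ edge-disjoint subgraphs. Each of these subgraphs has $\binom{n}{2}g$ edges and distance at least $2$. I will therefore look for a partition of the edge set into $g$ classes, each of size $\binom{n}{2}g$, such that no two edges in the same class give words at distance $1$.

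First I would determine when two weight-two words $\mathbf{c}_e,\mathbf{c}_{e'}$ are at distance exactly $1$. Distance $1$ forces the two words to have the same support $\{x,y\}$. It also forces them to agree in one of the two coordinates and differ in the other. In graph terms, this means $e=\{(x,a),(y,b)\}$ and $e'=\{(x,a),(y,b')\}$ with $b\ne b'$, or the symmetric situation. In every other case the distance is $0$ (the same edge), $2$, or larger: words with different supports differ in at least two positions. So the distance condition only constrains edges lying between the same pair of parts $P_x,P_y$. Within one such pair, the edges of a class must form a set of pairs $(a,b)\in[g]^2$ in which no two pairs share a first coordinate or a second coordinate. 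In other words, each class restricted to $P_x\cup P_y$ must be a matching of the complete bipartite graph $K_{g,g}$ between $P_x$ and $P_y$.

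The construction is to take, for $k\in[g]$,
$$G_k=\big\{\{(x,a),(y,b)\}: 1\le x<y\le n,\ b-a\equiv k \pmod g\big\}.$$
For fixed $x<y$ and fixed $k$, the edges of $G_k$ between $P_x$ and $P_y$ are the $g$ pairs $(a,a+k)$, $a\in[g]$. These form a perfect matching of $K_{g,g}$, so $G_k$ has distance $2$ and size $\binom{n}{2}g$, which makes it optimal. Every edge $\{(x,a),(y,b)\}$ with $x<y$ lies in exactly one class, namely the one with $k\equiv b-a$. Hence $\{G_1,\dots,G_g\}$ partitions $K_{n\times g}$.

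There is no real obstacle here. The only points needing care are the case analysis showing that distance $1$ occurs only between edges with equal support, and orienting each pair of parts ($x<y$) so that the difference $b-a$ is well defined.
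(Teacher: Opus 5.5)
Your proposal is correct and follows essentially the same route as the paper. Your classes $G_k$, defined by $b-a\equiv k \pmod g$ for $x<y$, are exactly the paper's $H_k=\bigcup_{i<j}F_{i,j}^{k}$. You also spell out the distance-$1$ case analysis, which shows that each class must meet every pair of parts in a matching, while the paper treats that step as straightforward.
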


\proof Let ${\cal C}$ be an optimal $q$-ary code of length $n$ and constant weight two. %where $q=g+1$. %We treat distance $d=2$.
If the code  ${\cal C}$ has distance 2,
then by Lemma \ref{bound_A_q_n2w} (1), the size of ${\cal C}$ is ${n\choose 2}g$.
Since the number of edges of $K_{n\times g}$ is ${n\choose 2}g^{2}$, it follows that $K_{n\times g}$ has to be decomposed into $g$ optimal codes.
It is straightforward that the set of supports of the codewords in ${\cal C}$ contains all pairs of $[n]$
and that any two distinct codewords $\mathbf{x},\mathbf{y}$ with the same support $\{i,j\}$ satisfies that $x_i\ne y_i$ and $x_j\ne y_j$.
For any support $\{i,j\}$ and $k\in [g]$, $F_{i,j}^{k} = \{\{(i,m),(j,m+k)\}:1\leq m \leq g\}$ is a one-factor of
the complete bipartite graph with two parts $P_{i}$ and $P_{j}$, where $m+k$ is reduced modulo $g$ to lie in $[g]$.
For any $k\in [g]$, define $H_k= \bigcup_{1\le i<j\le n}F_{i,j}^{k}$. It is readily checked that $\{H_1,H_2,\ldots,H_g\}$ forms an optimal decomposition of $K_{n\times g}$ with respect to distance $2$.
%Equivalently, the associated multipartite graph $G$ of $\mathcal{C}$ contains precisely one perfect matching of every complete bipartite subgraph $G_{i,j}$ over the vertex set $P_{i}\cup P_{j}$ with two different parts $P_i$ and $P_j$, where $1\le i<j\le n$. There are ${n\choose 2}g^{2}$ edges in $K_{n\times g}$, so we need $g$ disjoint such subgraphs of $K_{n\times g}$. It is well-known that there is a one-factorization of each complete bipartite graph $G_{i,j}$, namely,  $\{F_{i,j}^1,F_{i,j}^2,\ldots,F_{i,j}^g\}$ (see \cite[Corollary 4]{OF}). For any $k\in [g]$, define $H_k= \bigcup_{1\le i<j\le n}F_{i,j}^{k}$. It is readily checked that $\{H_1,H_2,\ldots,H_g\}$ forms an optimal decomposition of $K_{n\times g}$ with respect to distance $2$.
\qed

\begin{Theorem}\label{d4}
There exists an optimal decomposition of $K_{n\times g}$ with respect to distance $d=4$.
\end{Theorem}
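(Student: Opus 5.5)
By Lemma~\ref{bound_A_q_n2w}~(2), an optimal $(n,4,2)_q$-code has exactly $\lfloor n/2\rfloor$ codewords. Two weight-two words are at distance $4$ precisely when their supports are disjoint. Hence a subgraph of $K_{n\times g}$ with distance $4$ is a set of edges whose projections onto the part indices $[n]$ form a matching of $K_n$. Such a subgraph is largest exactly when this projected matching has $\lfloor n/2\rfloor$ edges, i.e.\ when it is a one-factor of $K_n$ ($n$ even) or a near one-factor of $K_n$ ($n$ odd).

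The counting fixes the number of subgraphs. Since $K_{n\times g}$ has ${n\choose 2}g^2$ edges, we need ${n\choose 2}g^2/\lfloor n/2\rfloor$ subgraphs. This equals $(n-1)g^2$ for $n$ even and $ng^2$ for $n$ odd.

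The plan is to lift a (near) one-factorization of $K_n$ by the bipartite one-factorizations already used in the proof of Theorem~\ref{d2}.
\begin{itemize}
\item For $n$ even, take the one-factorization $\{F_1,\ldots,F_{n-1}\}$ of $K_n$ given in the preliminaries, with $\infty$ relabelled as $n$.
\item For $n$ odd, take the near one-factorization of $K_n$ obtained by deleting a vertex from that one-factorization of $K_{n+1}$. It has $n$ near one-factors, each with $(n-1)/2$ edges.
\end{itemize}
Call these factors $M_1,\ldots,M_t$, where $t=n-1$ or $t=n$.

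For each pair $\{i,j\}$, the bipartite graph between $P_i$ and $P_j$ is $K_{g,g}$, whose $g^2$ edges can be written as single edges $\{(i,a),(j,b)\}$ with $a,b\in[g]$. Fix an ordering of each $M_s$, say $M_s=\{e_1,\ldots,e_{\lfloor n/2\rfloor}\}$ with $e_r=\{i_r,j_r\}$. For $s\in[t]$ and $(a,b)\in[g]^2$, define
\[
H_{s,a,b}=\big\{\{(i_r,a),(j_r,b)\}: r\in[\lfloor n/2\rfloor]\big\}.
\]

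Each $H_{s,a,b}$ has $\lfloor n/2\rfloor$ edges with pairwise disjoint supports, so it is an optimal code with distance $4$. To see that the family partitions $K_{n\times g}$, take any edge $\{(i,a),(j,b)\}$. Its support pair $\{i,j\}$ lies in exactly one $M_s$, and once the orientation $(i_r,j_r)$ of that pair is fixed, the edge corresponds to exactly one pair $(a,b)$. The number of subgraphs is $tg^2$, which agrees with the count above.

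I expect no real obstacle here. The only point needing care is the orientation convention within each $M_s$, which makes the correspondence between edges and indices $(s,a,b)$ a bijection. Once that convention is fixed, the partition property is immediate.
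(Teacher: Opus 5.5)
Your proposal is correct and is essentially the paper's proof. Your $H_{s,a,b}$ is exactly the paper's $G^{k}_{(i,j)}$: fix an orientation of each pair in a one-factor (or near one-factor) of $K_n$, then attach a symbol pair from $[g]\times[g]$, which yields $(n-1)g^2$ or $ng^2$ optimal codes. The only quibble is that the reference to the bipartite one-factorizations of Theorem~\ref{d2} is superfluous, since your construction only ever uses single edges $\{(i,a),(j,b)\}$ indexed by $(a,b)$.
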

\proof
Let ${\cal C}$ be an optimal $q$-ary code of length $n$, constant weight two and distance $d=4$ (see Lemma \ref{bound_A_q_n2w} (2)).
It is straightforward that for any two distinct codewords in $\mathcal{C}$, their supports have empty intersection. So the set of supports of $\mathcal{C}$ forms a one-factor of $K_n$ if $n$ is even, a near one-factor of $K_n$ if $n$ is odd.
There are ${n\choose 2}g^{2}$ edges in $K_{n\times g}$, so
we need $(n-1)g^2$ disjoint optimal codes if $n$ is even and $ng^2$ disjoint optimal codes if $n$ is odd. If $n$ is even, then we take a one-factorization of $K_n$ over $[n]$, say $\{F_1,F_2,\ldots,F_{n-1}\}$. For any~$(i,j)\in [g]\times[g]$ and $k\in [n-1]$, define  $G_{(i,j)}^k=\big\{\{(x,i),(y,j)\}:\{x,y\}\in F_k\big\}$. It is immediate that $\big\{G_{(i,j)}^k: (i,j)\in [g]\times [g],\ k\in [n-1]\big\}$ forms an optimal decomposition of $K_{n\times g}$ with respect to distance $4$. If $n$ is odd, the conclusion follows similarly by taking a near one-factorization of~$K_n$ instead. \qed

The rest of the paper is devoted to the existence problem of optimal decompositions of~$K_{n\times g}$ with respect to distance $3$. Section 2 characterizes optimal decompositions of $K_{n\times g}$ in detail and introduces two strategies to deal with general constructions.
Section 3 deals with  the case~$n\le g$ and presents a complete solution for this case.
In Section 4, the case~$n>g$ is considered.
%i.e., the problem of one-factorizations of $K_{n\times g}$ with  distance $d=3$.
We show that generally the complete multipartite graph $K_{n\times g}$ can be decomposed into $g(n-1)$ one-factors  with minimum distance three, leaving a small gap to be resolved. Section 5 is a conclusion with some remarks.

\section{Notations and strategies}
From now on, we concentrate on optimal decompositions of $K_{n\times g}$ with respect to distance~$3$. As a preliminary, this section describes the properties of optimal decompositions, introduces some notations applied throughout this paper, and displays two strategies to construct the optimal codes of the optimal decompositions.

Note that in a binary constant-weight code, the distance between any two codewords is even;
hence, for distance $3$ only $g=q-1>1$ is considered.
%Let ${\cal C}$ be an optimal $q$-ary code of length~$n$ and constant weight two, where $q=g+1$.
%If the code ${\cal C}$ has distance 3, then by Lemma~\ref{n32-bound}, the associated multipartite graph $G$ forms an almost $1$-regular subgraph of $K_{n\times g}$ comprised of exactly $A_{g+1}(n,3,2)=\min\{\left\lfloor\frac{gn}{2}\right\rfloor,{n \choose 2}\}$ edges.
%If $n>g$, then $G$ is a one-factor or a near one-factor of $K_{n\times g}$ according to the parity of $gn$. But the distance of an almost $1$-regular subgraph of $K_{n\times g}$ can be equal to two.
Let $\mathcal{C}$ be an optimal $(n,3,2)_{q}$-code.
It evident from Lemma~\ref{n32-bound} that the associated multipartite graph $G$ of $\mathcal{C}$ forms an almost~$1$-regular subgraph of $K_{n\times g}$ comprised of exactly $A_{q}(n,3,2)=\min\{\left\lfloor\frac{gn}{2}\right\rfloor,{n \choose 2}\}$ edges. However, it is possible that a code obtained from an almost $1$-regular subgraph of $K_{n\times g}$ has distance~$2$.
%Let $G$ be an almost~$1$-regular subgraph of $K_{n\times g}$ comprised of exactly $A_{g+1}(n,3,2)=\min\{\left\lfloor\frac{gn}{2}\right\rfloor,{n \choose 2}\}$ edges.
In fact, the distance between any two disjoint edges whose endpoints lie in the same pair of parts $P_{x}$ and $P_{y}$, $x\neq y$, is $2$.
Therefore, $d=3$ requires that $G$ does not contain `parallel edges',
 where parallel edges represent two vertex-disjoint edges~$e=\{a,b\}$ and~$e'=\{a',b'\}$ with~$a,a'\in P_x$, $b,b'\in P_y$ for some $x,y\in [n]$, $x\ne y$.
For convenience, we use an {\em $\mathrm{AR}$-graph} to represent the largest almost $1$-regular subgraph of $K_{n\times g}$ without parallel edges. (Hence, an AR-graph contains $A_{q}(n,3,2)$ edges). % and comprised of exactly $A_{g+1}(n,3,2)=\min\{\left\lfloor\frac{gn}{2}\right\rfloor,{n \choose 2}\}$ edges. (An $\mathrm{AR}$-graph is also the largest almost $1$-regular subgraph of $K_{n\times g}$ without parallel edges.)
 As a consequence, an optimal decomposition of $K_{n\times g}$ with respect to distance $3$ is a decomposition of $K_{n\times g}$ into edge-disjoint $\mathrm{AR}$-graphs.
 %for which we use the notation $\mathrm{ODAR}(n,g)$.

Note that the number of edges in $K_{n\times g}$ is $g^{2}{n\choose 2}$. If $n\leq g$, an $\mathrm{AR}$-graph has $n\choose 2$ edges and ${n\choose 2}\mid g^{2}{n\choose 2}$; hence, an optimal decomposition of $K_{n\times g}$ with respect to distance $3$ is a decomposition of $K_{n\times g}$ into $g^2$ edge-disjoint $\mathrm{AR}$-graphs, each of size ${n\choose 2}$, for which we use the notation $\mathrm{ODAR}(n,g)$.
If $n>g$ and $gn$ is even, then an $\mathrm{AR}$-graph has $\frac{gn}{2}$ edges and it is a one-factor of $K_{n\times g}$.
Therefore, an optimal decomposition of $K_{n\times g}$ with respect to distance~$3$ becomes a one-factorization of $K_{n\times g}$ with each one-factor not containing parallel edges, for which we denote by $\mathrm{OF}(n,g)$.
If $n>g$ and $gn$ is odd, then an $\mathrm{AR}$-graph is a near one-factor with $\frac{gn-1}{2}$ edges. Since $\frac{gn-1}{2}\nmid g^{2}{n\choose 2}$,~$K_{n\times g}$ cannot be decomposed into near one-factors if $gn$ is odd.
Hence, we always assume that $gn$ is even when~$n>g$.
%For clarity, we state these graph descriptions separately in a lemma.
This is summarized in the following lemma.

\begin{Lemma}\label{d3} %For optimal decompositions of $K_{n\times g}$ with respect to distance $d=3$,
We have the following:
\begin{itemize}
\item If $n\le g$,  an optimal decomposition of $K_{n\times g}$ with respect to distance $3$ is the same as an $\mathrm{ODAR}(n,g)$, a decomposition of $K_{n\times g}$ into $g^2$ copies of $\mathrm{AR}$-graphs, each of size ${n\choose 2}$.
\item If $n>g$ and $gn$ is even, an optimal decomposition of $K_{n\times g}$ with respect to distance $3$ is equivalent to an $\mathrm{OF}(n,g)$, a one-factorization of $K_{n\times g}$
 into $g(n-1)$ one-factors, in which each one-factor does not contain parallel edges and has size $\frac{gn}{2}$.
\item If $n>g$ and $gn$ is odd, there is no optimal decomposition of $K_{n\times g}$ with respect to distance~$3$.
\end{itemize}
\end{Lemma}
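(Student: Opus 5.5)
The plan is to derive all three items of Lemma~\ref{d3} from one observation: distance $3$ codes are exactly the subgraphs with no adjacent-in-code pairs violating two conditions, and the sizes come from Lemma~\ref{n32-bound}. Take an optimal $(n,3,2)_q$-code $\mathcal{C}$ and let $G$ be its associated subgraph of $K_{n\times g}$. I will first show that distance at least $3$ is equivalent to two conditions on $G$: every vertex has degree at most $1$, and $G$ has no parallel edges.

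For two distinct weight-two words $\mathbf{c}_e,\mathbf{c}_{e'}$, the distance is checked by cases on the supports.
\begin{itemize}
\item If the supports are disjoint, the distance is $4$.
\item If the supports share exactly one coordinate $x$, the distance is $3$ when the entries at $x$ differ and $2$ when they agree. Agreeing entries means $e$ and $e'$ share the vertex $(x,a)$.
\item If the supports coincide, say $\{x,y\}$, the distance is $1$ when $e$ and $e'$ share a vertex. When the entries differ in both coordinates, $e$ and $e'$ form a pair of parallel edges and the distance is $2$.
\end{itemize}
Hence distance $\ge 3$ holds exactly when $G$ has maximum degree at most $1$ and has no parallel edges. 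By Lemma~\ref{n32-bound}, an optimal code has $A_q(n,3,2)=\min\{\lfloor gn/2\rfloor,\binom n2\}$ edges. This justifies calling such a $G$ an AR-graph, and an optimal decomposition is precisely a partition of the edge set into AR-graphs.

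Next I count edges. $K_{n\times g}$ has $g^2\binom n2$ edges.
\begin{itemize}
\item If $n\le g$, each AR-graph has $\binom n2$ edges, so an optimal decomposition has exactly $g^2$ parts. This is the definition of ODAR$(n,g)$.
\item If $n>g$ and $gn$ is even, each part has $gn/2$ edges and maximum degree at most $1$ on $gn$ vertices, so it is a one-factor. There are $g^2\binom n2/(gn/2)=g(n-1)$ of them, which gives OF$(n,g)$. Conversely, a one-factorization into one-factors without parallel edges consists of graphs of maximum degree $1$ with no parallel edges and the optimal number of edges, so each part is an AR-graph.
\item If $n>g$ and $gn$ is odd, I must show that $(gn-1)/2$ does not divide $g^2 n(n-1)/2$. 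Set $m=(gn-1)/2$. Since $gn\equiv 1 \pmod{2m}$, we get $g^2n(n-1)/2\cdot 2=g\cdot gn\cdot(n-1)\equiv g(n-1)\pmod{2m}$.
\end{itemize}

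The last bullet is the main obstacle, because a bare divisibility claim needs care. A cleaner route is a degree argument. Each near one-factor misses exactly one vertex, so every vertex $v$ is isolated in some number $k_v$ of the parts. The degree of $v$ in $K_{n\times g}$ is $g(n-1)$, so if there are $N$ parts, $v$ is covered in $N-k_v=g(n-1)$ of them. Summing over vertices gives $N=\sum_v k_v = gn(N-g(n-1))$, so $N(gn-1)=g^2n(n-1)$. Since $\gcd(gn-1,gn)=1$, we get $(gn-1)\mid g(n-1)$. But $g(n-1)<gn-1$ whenever $g\ge 2$, and $g(n-1)>0$ because $n>g\ge 2$. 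This is a contradiction, and $g\ge 2$ is already assumed because distance $3$ forces $q>2$. This finishes the proof of the third item.
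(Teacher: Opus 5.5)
Your proposal is correct and follows the same route as the paper. Like the paper, you characterize distance-$3$ subgraphs as almost $1$-regular graphs with no parallel edges, take their size from Lemma~\ref{n32-bound}, and count against the $g^2\binom{n}{2}$ edges of $K_{n\times g}$.

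What you add is an actual proof of the non-divisibility $\frac{gn-1}{2}\nmid g^2\binom{n}{2}$, which the paper only asserts. Your degree count simply rederives the edge-count identity $N(gn-1)=g^2n(n-1)$, and the step $\gcd(gn-1,gn)=1$ then finishes the argument. You also correctly observe that this needs $g\ge 2$: for $g=1$, a near one-factorization of $K_n$ would itself be an optimal decomposition.
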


 \begin{Example}\label{OF_3_n_2}
{\rm Below we list an $\mathrm{OF}(4,2)$ %an optimal decomposition of $K_{4\times 2}$ with respect to distance $d=3$
in which each row is a one-factor of $K_{4\times 2}$ without parallel edges. An edge $\{(x,a),(y,b)\}$ is simply written as $x_a\ y_b$.}
\end{Example}
%Below we list an $\mathrm{OF}(3,2)$ in which each row corresponds to a one-factor of $K_{3\times 2}$.
%\begin{center}
%\begin{tabular}{l l l l l l l l }
%$1_ 1\text{ }2_ 1$&$1_ 2\text{ }3_ 1$&$2_ 2\text{ }3_ 2$\\
%$1_ 1\text{ }2_ 2$&$2_ 1\text{ }3_ 1$& $1_ 2\text{ }3_ 2$\\
%$1_ 1\text{ }3_ 1$&$2_ 1\text{ }3_ 2$& $1_ 2\text{ }2_ 2$\\
%$1_ 1\text{ }3_ 2$&$1_ 2\text{ }2_ 1$& $2_ 2\text{ }3_ 1$\\
%\end{tabular}
%\end{center}
%The following lists a $\mathrm{TOC}_{3}(4,3,2)$, each row is an optimal $(4,3,2)_{3}$-code with the alphabet $\mathbb{Z}_{3}$.
%The collection $\{{C}_{0},{C}_{1},\ldots,{C}_{5}\}$ is an $\mathrm{OF}(4,2)$, where
\begin{center}
\begin{tabular}{l l l l l l l l }
$1_ 1\text{ }2_ 1$& $1_ 2\text{ }3_ 1$& $2_ 2\text{ }4_ 1$& $3_ 2\text{ }4_ 2$\\
$1_ 1\text{ }2_ 2$& $2_ 1\text{ }3_ 1$& $3_ 2\text{ }4_ 1$& $1_ 2\text{ }4_ 2$\\
$1_ 1\text{ }3_ 1$& $2_ 1\text{ }3_ 2$& $1_ 2\text{ }4_ 1$& $2_ 2\text{ }4_ 2$\\
$1_ 1\text{ }3_ 2$& $2_ 1\text{ }4_ 1$& $3_ 1\text{ }4_ 2$& $1_ 2\text{ }2_ 2$\\
$1_ 1\text{ }4_ 1$& $2_ 1\text{ }4_ 2$& $2_ 2\text{ }3_ 1$& $1_ 2\text{ }3_ 2$\\
$1_ 1\text{ }4_ 2$& $1_ 2\text{ }2_ 1$& $3_ 1\text{ }4_ 1$& $2_ 2\text{ }3_ 2$\\
\end{tabular}
\end{center}
\qed

%For convenience of exposition, we use the following notation.
 For a set ${S}$ of some ordered pairs of $[n]$ and for elements $a,b\in [g]$, denote
\begin{equation}\label{E0}
{S}(a,b)=\big\{\{(x,a),(y,b)\}:(x,y)\in {S}\big\}.
\end{equation}
If ${S}$ is a set of unordered pairs in $[n]$, then we regard every pair as an ordered pair in ascending order and we also adopt the notation (\ref{E0}).
For convenience, relevant notations
and terminology referred to throughout the paper are summarized
in Table 1.

\begin{table}[h!]
\setlength{\abovecaptionskip}{0cm}
\setlength{\belowcaptionskip}{+0.2cm} % µ÷Õû±êÌâÏ·½µÄ¼ä¾à
 \begin{center}
   \caption{Table of definitions and notations}
   \begin{tabular}{c|c|c}
   \toprule[2pt]
     \textbf{Notation} & \textbf{Meaning} & \textbf{Remarks}\\
     \hline
     $[a,b]$ & $\{x\in\mathbb{Z}^{+}:a\leq x \leq b\}$ & Sec.1\\
     $[n]$ & $\{1,2,\ldots,n\}$ & Sec.1\\
     $K_{n\times g}$ & \makecell[c]{Complete multipartite graph with $n$ parts of size $g$, where \\there are no edges between vertices of the same part}& Sec.1\\
     $[n]\times [g]$ & The vertex set of $K_{n\times g}$ &Sec.1\\
     $P_x=\{x\}\times [g]$& A part of $K_{n\times g}, x\in [n]$ &Sec.1\\
     %$(x,a)$& \makecell[c]{a point of $K_{n\times g}$, where $x$ always reduced in $[n]$ and \\$a$ always reduced in $[g]$} &Sec.1\\
      $x_a\ y_b$&
      An edge $\{(x,a),(y,b)\}$& Sec.2\\
     %$K_{n}$ & complete graph & Sec.1\\
     $(n,d,w)_{q}$-code & \makecell[c]{$q$-ary code of length $n$, constant weight $w$, distance $d$}&Sec.1\\
     $A_{q}(n,d,w)$& Maximum size of an $(n,d,w)_{q}$-code &Sec.1\\
     Almost $1$-regular graph&A graph with each vertex having degree at most $1$&Sec.1\\
     $\mathrm{AR}$-graph& \makecell[c]{Almost $1$-regular subgraph of $K_{n\times g}$ without parallel edges\\ and comprised of $A_{q}(n,3,2)$ edges} & Sec.2\\
     $\mathrm{ODAR}(n,g)$&\makecell[c]{Decomposition of $K_{n\times g}$ into  $\mathrm{AR}$-graphs, where $g\geq n$}&Sec.2\\
     $\mathrm{OF}(n,g)$ & \makecell[c]{One-factorization of $K_{n\times g}$, no parallel edges in each \\one-factor, where $n>g$ }&Sec.2\\
     $\delta_{i}$& 0 if $i$ is even;
     1 if $i$ is odd.&Eq.(\ref{E-1})\\
     $S(a,b)$& $\big\{\{(x,a),(y,b)\}:(x,y)\in {S}\big\},S\subset [n]\times[n]$, $(a,b)\in[g]\times[g]$&Eq.(\ref{E0})\\
     $D_{i}$& $\{(x,x+i):x\in [n]\},i\in[n-1]\setminus\{\frac{n}{2}\}$&Eq.(\ref{E1})\\
     ${D}_{\frac{n}{2}}$& $\left\{\left(x,x+\frac{n}{2}\right):x\in\left[\frac{n}{2}\right]\right\},2\mid n$& Eq.(\ref{E2})\\
     ${D}_{n}$ & $\left\{\left(x,x+\frac{n}{2}\right):x\in\left[\frac{n}{2}+1, n\right]\right\},2\mid n$&Eq.(\ref{E3})\\
     $x+y\in[n]\pmod{n}$ & $x+y$ is reduced modulo $n$ to lie in $[n]$& Sec.3, Sec.4\\
     $\mathcal{F}$ & One-factorization or near one-factorization of $K_{n}$ over $[n]$& Sec.3, Sec.4\\
     $\mathcal{G}$ & One-factorization or near one-factorization of $K_{g}$ over $[g]$& Sec.3, Sec.4\\
     \toprule[2pt]
   \end{tabular}
 \end{center}
\end{table}

Employing the graph descriptions in Lemma \ref{d3}, we introduce two general strategies to construct AR-graphs.
Each such graph is an $(n,3,2)_{q}$-code which attains the bound on the largest size. The ideas and variations of the strategies will be used frequently in the remainder of this paper.
%Note that we do not consider the case that $n>g$ and $gn$ is odd in either of the two strategies, since no such optimal decomposition of $K_{n\times g}$ with respect to distance 3 exists.
By Lemma \ref{d3}, if $n>g$, then we only need to consider the case $gn$ is even.

\textbf{Strategy A:}

Let
\begin{equation}\label{s}
s=
\begin{cases}
g,& \text{ if } 2\mid n \text{ and } n>g,\\
n-1,& \text{ if } 2\mid n \text{ and } n\le g,\\
n,& \text{ if } 2\nmid n \text{ and } n\le g.
\end{cases}
\end{equation}
%(We do not consider the case that $2\nmid n$ and $n>g$, for which we may have a slight variation.)
(The case that $2\nmid n$ and $n>g$ is excluded from the present analysis. A slight variation to address this case will be developed later. Note that in this code $g$ must be even.)
Let ${F}_{1},{F}_{2},\ldots,{F}_{s}$ be
 $s$ mutually disjoint one-factors or near one-factors of $K_{n}$ over $[n]$. For any $\{x,y\}\in F_{r}$, $r\in[s]$, we choose two elements $c_{r,x},c_{r,y}\in [g]$ to %form an edge $\{(x,a_{r,x}),(y,a_{r,y})\}$ and $a_{r,x},x\in[n],r\in [s],$
 satisfy the following property:\vspace{-0.2cm}
\begin{itemize}
\item[(A)] For a fixed $z\in[n]$, the set $\{c_{r,z}:r\in [s]\}$ contains $s$ distinct symbols in $[g]$.\vspace{-0.2cm}
\end{itemize}
Now, we construct an $n$-partite graph $${G}=\bigcup_{r\in[s]}{G}_{r},$$ where $${G}_{r}=\big\{\{(x,c_{r,x}),(y,c_{r,y})\}:\{x,y\}\in{F}_{r}\big\}.$$\\[2mm]

%Let ${F}_{1},{F}_{2},\ldots,{F}_{s}$ be $s$ mutually disjoint one-factors or near one-factors of $K_{n}$ over $[n]$. For any $\{x,y\}\in F_{r}$, $r\in[s]$, we choose two elements $a_{r,x},a_{r,y}\in [g]$ to form an edge $\{(x,a_{r,x}),(y,a_{r,y})\}$ and $a_{r,x},x\in[n],r\in [s],$ satisfy the following property:\vspace{-0.2cm}
%\begin{itemize}
%\item[(A)] The set $\{a_{r,x}:r\in [s]\}$, for each fixed $x$, contains $s$ distinct symbols in $[g]$.\vspace{-0.2cm}
%\end{itemize}
%By this, we construct $s$ subgraphs of $K_{n\times g}$ with edge set $${G}_{r}=\big\{\{(x,a_{r,x}),(y,a_{r,y})\}:\{x,y\}\in{F}_{r}\big\}.$$

 %Define an $n$-partite graph with edge set ${A}=\bigcup_{r\in[s]}{A}_{r}$, where $${A}_{r}=\big\{\{(x,a_{r,x}),(y,a_{r,y})\}:\{x,y\}\in{F}_{r},x<y\big\},a_{r,x},a_{r,y}\in [g].$$
  %It is clear that ${G}$ has $A_{g+1}(n,3,2)$ edges (see Lemma \ref{n32-bound}).
%Furthermore, ${G}$ is almost $1$-regular if it possesses the following property:\vspace{-0.2cm}

%Let $(x,a)$ be a point in $K_{n\times g}$.

%(We do not consider the  case that $2\nmid n$ and $n>g$, for which we may have a slight variation.)

\begin{Theorem}\label{SA}
The graph $G$ constructed by Strategy A forms an AR-graph.
\end{Theorem}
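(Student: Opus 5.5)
To show that $G$ is an AR-graph we check three things in turn: $G$ is almost $1$-regular, $G$ has no parallel edges, and $G$ has exactly $A_q(n,3,2)$ edges as given by Lemma~\ref{n32-bound}. Every edge of $G_r$ joins $(x,c_{r,x})$ and $(y,c_{r,y})$ with $\{x,y\}\in F_r$ and $x\ne y$, so it is an edge of $K_{n\times g}$. The $F_r$ are mutually disjoint, so the $G_r$ are mutually disjoint as well (they project onto disjoint sets of pairs of $[n]$).

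\emph{Degrees.} Fix a vertex $(z,a)$. In $G_r$, the only vertex of $P_z$ that can be covered is $(z,c_{r,z})$. It is covered exactly when $z$ is not the isolated vertex of $F_r$, and then only by the unique pair of $F_r$ containing $z$. Hence the degree of $(z,a)$ in $G$ equals the number of $r\in[s]$ with $c_{r,z}=a$ and $z$ covered by $F_r$. Property (A) says that the symbols $c_{r,z}$, $r\in[s]$, are pairwise distinct, so this number is at most $1$. We only need to define $c_{r,z}$ when $z$ is covered by $F_r$; if $z$ is isolated in $F_r$, the value can be chosen arbitrarily among the unused symbols without affecting anything.

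\emph{No parallel edges.} Suppose two edges of $G$ both join $P_x$ and $P_y$. Their projected pair $\{x,y\}$ lies in some $F_r$, and the $F_r$'s are disjoint and each contains $\{x,y\}$ at most once. So both edges come from the same $r$ and are the same edge. Thus $G$ has at most one edge between any two parts, which is stronger than having no parallel edges.

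\emph{Edge count.} This step is a case check against \eqref{s}.
\begin{itemize}
\item If $2\mid n$ and $n>g$: $G$ is the union of $s=g$ one-factors of $K_n$, so it has $gn/2=\lfloor gn/2\rfloor$ edges.
\item If $2\mid n$ and $n\le g$: the $s=n-1$ disjoint one-factors exhaust $K_n$, so $G$ has $(n-1)n/2=\binom n2$ edges.
\item If $2\nmid n$ and $n\le g$: the $s=n$ disjoint near one-factors have $(n-1)/2$ edges each, for $n(n-1)/2=\binom n2$ edges in total.
\end{itemize}
In every case the count matches Lemma~\ref{n32-bound}.

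Property (A) needs $s\le g$, and this holds in each case ($s=g$, or $s\le n\le g$), so the construction is consistent. The only real subtlety is the degree argument, namely that (A) forces distinct symbols at each part; once that is in place, the rest is direct verification.
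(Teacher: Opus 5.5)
Your proof is correct and follows essentially the same route as the paper: disjointness of the $F_r$ rules out parallel edges, each $F_r$ being a (near) matching together with property (A) bounds every vertex degree by one, and the edge count $sn/2$ or $s(n-1)/2$ is checked against Lemma~\ref{n32-bound} case by case. Your extra remarks (at most one edge between any two parts, $s\le g$, and how $c_{r,z}$ is handled when $z$ is isolated in $F_r$) make explicit points the paper leaves implicit.
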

\proof Since ${F}_{1},{F}_{2},\ldots,{F}_{s}$ are mutually disjoint one-factors or near one-factors of $K_{n}$, %the endpoints of any two edges in ${G}=\cup_{r\in[s]}{G}_{r}$ belong to at least three different parts. Hence
the~$n$-partite graph ${G}=\cup_{r\in[s]}{G}_{r}$ does not contain parallel edges.
${G}$ has $\frac{sn}{2}$ edges if $n$ is even, and~$\frac{s(n-1)}{2}$ edges if $n$ is odd, i.e., ${G}$ has $A_{q}(n,3,2)$ edges (see Lemma \ref{n32-bound}).
In each $F_{r}$, $r\in[s]$,
for a fixed $z\in [n]$, there is at most one pair containing $z$;
hence, there exists at most one edge incident to
 $(z,c_{r,z})$.
Since the set $\{c_{r,z}:r\in [s]\}$, for each fixed $z$, contains $s$ distinct symbols in~$[g]$,
it follows that there is at most one edge in $G$ incident to $(z,a)$ for any $a\in[g]$.
This means that ${G}$ is almost $1$-regular.
Hence, $G$ is an AR-graph.
\qed

%In fact, if there exist two elements $a_{r_{1},x},a_{r_{2},x}$ such that $a_{r_{1},x}=a_{r_{2},x}=a$, we have two edges $e_{1}\in G_{r_{1}}$ and $e_{2}\in G_{r_{2}}$ that are both incident to vertex $(x,a)$. This contradicts that ${G}$ is almost $1$-regular, as vertex $(x,a)$ has degree greater than one.

%Obviously,  $\mathcal{A}$ has $\sum_{r=1}^{s}|\mathcal{F}_{r}|$ edges.
%When $n$ is even, if $s=g\leq n-1$, then the size of $\mathcal{A}$ is $\frac{gn}{2}$, and thus $\mathcal{A}$ is a one-factor of $K_{n\times g}$ without parallel edges.
%If $s=n-1< g$, then $\mathcal{A}$ is an almost $1$-regular subgraph of $K_{n\times g}$ without parallel edges, and the size of $\mathcal{A}$ is $\frac{n(n-1)}{2}={n\choose 2}$.
%When $n$ is odd, if $s=n\leq g$, $\mathcal{A}$ is an almost $1$-regular subgraph of $K_{n\times g}$ without parallel edges, and the size of $\mathcal{A}$ is $\frac{(n-1)n}{2}={n\choose 2}$.

%\noindent{\textbf{Remark:}}
%If ${F}_{1},{F}_{2},\ldots,{F}_{s}$ are $s$ mutually disjoint near one-factors of $K_{n}$, there exists an $r$ for which $a_{r,x}$ does not exist, and set $\{a_{r,x}:r\in [s]\}$ is defined only under the conditions for which $a_{r,x}$ exists.

\begin{Example}\label{SA}
{\rm  Let $n=g=4$ and $s=n-1=3$.
Take $F_{1}=\{\{1 ,2\},\{3,4\}\}$, $F_{2}=\{\{1,3\},\{2 ,4\}\}$ and $F_{3}=\{\{1,4\},\{2 ,3\}\}$ as a one-factorization of $K_{4}$.
For any $x\in[4]$, define $c_{r,x}=r$, $r\in[3]$ and hence $\{c_{r,x}:r\in [3]\}=[3]$. Using Strategy A, we obtain the AR-graph
\begin{center}
\begin{tabular}{l l l l l l l l }
%$1_ 2\text{ }2_ 2$& $3_ 2\text{ }4_ 2$&
%$1_ 3\text{ }3_ 3$& $2_ 3\text{ }4_ 3$&
%$1_ 4\text{ }4_ 4$& $2_ 4\text{ }3_ 4$\\
$G=\{1_ 1\text{ }2_ 1,\text{ }3_ 1\text{ }4_ 1,\text{ }1_ 2\text{ }3_ 2,\text{ }2_ 2\text{ }4_ 2,\text{ }1_ 3\text{ }4_ 3,\text{ }2_ 3\text{ }3_ 3\}.$\\
%$G_{1}=\{1_ 1\text{ }2_ 1,3_ 1\text{ }4_ 1\}$,\\
%$G_{2}=\{1_ 2\text{ }3_ 2,2_ 2\text{ }4_ 2\}$,\\
%$G_{3}=\{1_ 3\text{ }4_ 3,2_ 3\text{ }3_ 3\}$.\\
%$G_{1}=$&$1_ 1\text{ }2_ 1$& $3_ 1\text{ }4_ 1$\\
%$G_{2}=$&$1_ 2\text{ }3_ 2$& $2_ 2\text{ }4_ 2$\\
%$G_{3}=$&$1_ 3\text{ }4_ 3$& $2_ 3\text{ }3_ 3$\\
\end{tabular}
\end{center}}
\end{Example}
\qed

%\begin{center}
%\begin{tabular}{l l l l l l l l }
%$1_ 2\text{ }2_ 2$& $3_ 2\text{ }4_ 2$&
%$1_ 3\text{ }3_ 3$& $2_ 3\text{ }4_ 3$&
%$1_ 4\text{ }4_ 4$& $2_ 4\text{ }3_ 4$\\
%$1_ 3\text{ }2_ 3$& $3_ 3\text{ }4_ 3$&
%$1_ 4\text{ }3_ 4$& $2_ 4\text{ }4_ 4$&
%$1_ 1\text{ }4_ 1$& $2_1\text{ }3_ 1$\\
%$1_ 4\text{ }2_ 4$& $3_ 4\text{ }4_ 4$&
%$1_ 1\text{ }3_ 1$& $2_ 1\text{ }4_ 1$&
%$1_ 2\text{ }4_2$& $2_ 2\text{ }3_ 2$\\
%$1_ 1\text{ }2_ 1$& $3_ 1\text{ }4_ 1$&
%$1_ 2\text{ }3_ 2$& $2_ 2\text{ }4_ 2$&
%$1_ 3\text{ }4_ 3$& $2_ 3\text{ }3_ 3$\\
%\end{tabular}
%\end{center}}

\textbf{Strategy B:}

Partition all ordered pairs of $[n]$ into $n$ sets ${D}_{1},{D}_{2},\ldots,{D}_{n}$ as follows:
\begin{equation}\label{E1}
{D}_{i}=\left\{\left(x,x+i\right):x\in \left[n\right]\right\}, i\in[n-1] \text{ and } i\ne\frac{n}{2}\text{ if } 2\mid n,
\end{equation}
\begin{equation}\label{E2}
{D}_{\frac{n}{2}}=\left\{\left(x,x+\frac{n}{2}\right):x\in\left[\frac{n}{2}\right]\right\}, \text{ if } 2\mid n,
\end{equation}
\begin{equation}\label{E3}
{D}_{n}=\left\{\left(x,x+\frac{n}{2}\right):x\in\left[\frac{n}{2}+1, n\right]\right\},\text{ if } 2\mid n,
\end{equation}
where $x+i$ is reduced modulo $n$ to lie in $[n]$.
Note that
$(x,y)\in {D}_{i}$ implies $(y,x)\in {D}_{j}$, where $i+j=n$, except $i=\frac{n}{2}$, where $(x,y)\in {D}_{\frac{n}{2}}$ implies $(y,x)\in {D}_{n}$.
 We say that a pair $\{i,j\}\subset[n]$ is {\em forbidden} if $(x,y)\in {D}_{i}$ implies $(y,x)\in {D}_{j}$. Accordingly, we have $\frac{n-1}{2}$ forbidden pairs for odd~$n$
  and $\frac{n}{2}$ forbidden pairs for even $n$. %Furthermore, On $D_{i}$, for any two elements $a,b\in [g]$, we have $D_{i}(a,b)$ is a subgraph of $K_{n\times g}$.

Let
\begin{equation}\label{t}
t=
\begin{cases}
{n\over 2},& \text{ if } 2\mid n \text{ and } n\le g,\\
{n-1\over 2},& \text{ if } 2\nmid n \text{ and } n\le g,\\
{g\over 2},& \text{ if } 2\nmid n,\ 2\mid g, \text{ and } n>g.
\end{cases}
\end{equation}
(The case that $2\mid n$, $2\nmid g$ and $n>g$ is not considered here.)
%Since there exist $\frac{n-1}{2}$ forbidden pairs for odd $n$ and $\frac{n}{2}$ forbidden pairs for even $n$,
It is evident that $t$ is not greater than the number of forbidden pairs.
%(The case that $2\mid n$, $2\nmid g$ and $n>g$ is excluded from the present analysis.)
%We choose $t$ distinct sets $D_{i_{1}},D_{i_{2}}, \ldots,D_{i_{t}}$ satisfying the following property:\vspace{-0.2cm}
Let $D_{i_{1}},D_{i_{2}}, \ldots,D_{i_{t}}$ be $t$ distinct sets satisfying the following property:\vspace{-0.2cm}
\begin{itemize}
%\item[(B1)] the set $\{i_{1},i_{2}, \ldots, i_{t}\}$ contains $t$ distinct elements in $[n]$;
\item[(B)]  $|P\cap\{i_{1},i_{2}, \ldots, i_{t}\}|\leq 1$ for any forbidden pair $P$. %; the  equality holds if $n\le g$.
\end{itemize}
%Let $\{a_{r},b_{r}:r\in[t]\}$ contain $2t$ distinct elements from $[g]$.
%Take a one-factor or near one-factor $G$ of $K_{g}$ over $[g]$,  %say $\left\{\{a_{i},b_{i}\}:k\in\left[\frac{g-\delta_{i}}{2}\right]\right\}$,
%where
%\begin{equation}\label{E-1}
%\delta_{i}=
%\begin{cases}
%0,& \text{ if } 2\mid i,\\
%1,& \text{ if } 2\nmid i.\\
%\end{cases}
%\end{equation}
Take out a subset $\{\{a_r,b_r\}:r\in[t]\}$ from any one-factor or near one-factor of $K_{g}$ over $[g]$, where $a_{r}<b_{r}$.
%Choose $t$ distinct edges $\{a_r,b_{r}\}$, $r\in[t]$ from $G$ and let $a_r<b_r$.
Then
construct an $n$-partite graph $${G}=\bigcup_{r\in[t]}{D}_{i_{r}}(a_{r},b_{r}).$$
%If $n$ is odd, the elements of $D_{i_{r}}$ is $n$. If $n$ is even, the elements of $D_{i_{r}}$ is $n$ if $i_{r} \in [n-1]\setminus \{\frac{n}{2}\}$, $\frac{n}{2}$ if $i_{r}=\frac{n}{2},n$.

%Recall the notation $\mathcal{D}_{i}$ from (\ref{E1}) to (\ref{E3}).
%Now we use the notations  in  (\ref{E0}) and  (\ref{E1})--(\ref{E3}).
%It is not difficult to check that ${G}$ has $A_{g+1}(n,3,2)$ edges.
 %(Here we only consider the three cases involved in (\ref{t}).)
 %For $2\nmid n$ and $n>g$, $t$ is not defined.
 %In order to ensure that ${G}$ does not contain parallel edges, by the fact on $D_i$'s mentioned above, ${G}$ should have the following

%For $2\nmid n$ and $n>g$, $t$ is not defined.
\begin{Theorem}\label{SB}
The graph $G$ constructed by Strategy B forms an AR-graph.
%Strategy A gives one of the largest AR-graphs.
\end{Theorem}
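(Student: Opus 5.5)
The plan is to check directly the three defining properties of an AR-graph for $G=\bigcup_{r\in[t]}D_{i_r}(a_r,b_r)$: it has no parallel edges, it is almost $1$-regular, and it has exactly $A_q(n,3,2)$ edges (Lemma \ref{n32-bound}). Along the way I will confirm that the construction is well defined. Since $t\le\lfloor g/2\rfloor$ in every case of (\ref{t}) (using $n\le g$ in the first two cases), a one-factor or near one-factor of $K_g$ contains $t$ edges. Hence the $2t$ symbols $a_1,b_1,\ldots,a_t,b_t$ are pairwise distinct.

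\textbf{No parallel edges.} Each ordered pair $(x,y)$ in $D_{i_r}$ produces exactly one edge $\{(x,a_r),(y,b_r)\}$ between $P_x$ and $P_y$. Suppose two edges of $G$ join $P_x$ and $P_y$. Since $D_1,\ldots,D_n$ partition the ordered pairs, either both come from the same ordered pair, or one comes from $(x,y)\in D_i$ and the other from $(y,x)\in D_j$.

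\begin{itemize}
\item In the first case they lie in the same $D_{i_r}$ and are the same edge.
\item In the second case $\{i,j\}$ is a forbidden pair, and both $i$ and $j$ lie among $i_1,\ldots,i_t$, contradicting (B).
\item The only possibility of $i=j$, namely $i=n/2$, is exactly what the splitting into $D_{n/2}$ and $D_n$ in (\ref{E2})--(\ref{E3}) prevents.
\end{itemize}

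So any two parts are joined by at most one edge, and in particular there are no parallel edges.

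\textbf{Almost $1$-regular.} Fix a vertex $(z,a)$. Because the symbols $a_r,b_r$ are pairwise distinct, $a$ occurs at most once among them. If $a=a_r$, the edges of $G$ at $(z,a)$ are those of $D_{i_r}(a_r,b_r)$ coming from pairs in $D_{i_r}$ with first coordinate $z$. There is at most one such pair: exactly one, $(z,z+i_r)$, in the general case, and at most one for $D_{n/2}$ and $D_n$. Symmetrically, if $a=b_r$, the edges come from pairs with second coordinate $z$, and again there is at most one. Thus every vertex has degree at most $1$.

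\textbf{Edge count.} We have $|D_i|=n$ for $i\notin\{n/2,n\}$ and $|D_{n/2}|=|D_n|=n/2$ when $n$ is even.

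\begin{itemize}
\item If $n$ is even and $n\le g$, then $t=n/2$ equals the number of forbidden pairs. Condition (B) then forces exactly one index from each forbidden pair. This gives $(n/2-1)n+n/2=\binom n2$ edges.
\item If $n$ is odd, every $D_i$ with $i\in[n-1]$ has size $n$. So $G$ has $tn$ edges, which is $\binom n2$ when $n\le g$ and $gn/2$ when $n>g$ and $g$ is even.
\end{itemize}

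In each case this equals $A_q(n,3,2)$ by Lemma \ref{n32-bound}, so $G$ is an AR-graph.

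The only delicate step is the no-parallel-edge argument, specifically the observation that a pair of parts can be hit twice only via a forbidden pair or via the self-paired difference $n/2$. Everything else is bookkeeping.
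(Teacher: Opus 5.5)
Your proposal is correct and follows the same route as the paper. The paper gets no parallel edges from the partition of ordered pairs together with property (B), gets almost $1$-regularity from the distinctness of the $2t$ symbols and the fact that each $D_{i_r}$ has at most one pair with a given first or second coordinate, and gets the edge count from the sizes $|D_i|$ and (B). Your extra checks make explicit what the paper leaves implicit: that $t\le\lfloor g/2\rfloor$, so the $t$ symbol pairs exist, and that $t=n/2$ forces exactly one index from each forbidden pair, including $\{n/2,n\}$.
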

\proof
Since $D_{i_{1}},D_{i_{2}}, \ldots,D_{i_{t}}$ are $t$ distinct sets satisfying the property (B), any ordered pair $(x,y)\in[n]\times[n]$ with $x\neq y$ occurs at most once in $\cup_{r\in [t]}D_{i_{r}}$; and $(x,y)\in \cup_{r\in [t]}D_{i_{r}}$ implies $(y,x)\notin \cup_{r\in [t]}D_{i_{r}}$.
It follows that the $n$-partite graph ${G}=\bigcup_{r\in[t]}{D}_{i_{r}}(a_{r},b_{r})$ does not contain parallel edges.
For any coordinate position $x\in[n]$, there exists at most one ordered pair in $D_{i_{r}}$ such that its first component is $x$; so is for its second component.
Since $\{\{a_{r},b_{r}\}:r\in[t]\}$ is a subset of a one-factor or a near one-factor, it follows that $\{a_{r},b_{r}:r\in[t]\}$ contains~$2t$ distinct elements in~$[g]$.
This means that $G$ is almost $1$-regular.
If $n$ is odd, then $|D_{i}|=n$ for any $i\in[n-1]$ and the number of edges in $G$ equals $\sum_{r\in[t]}|D_{i_{r}}|=tn=A_{q}(n,3,2)$. If $n$ is even and $n\leq g$, then $|D_{i}|=n$ if $i\in[n-1]\setminus\{\frac{n}{2}\}$ and $|D_{i}|=\frac{n}{2}$ if $i\in\{\frac{n}{2},n\}$, which is a forbidden pair. By property~(B), the number of edges in $G$ equals $\sum_{r\in[t]}|D_{i_{r}}|=(\frac{n}{2}-1)n+\frac{n}{2}= A_{q}(n,3,2)$.
It follows that $G$ is an AR-graph.\qed

%$D_{i}$ has $n$ ordered pairs when $i\in[n-1]\setminus\{\frac{n}{2}\}$ if $n$ is even, $D_{i}$ has $\frac{n}{2}$ ordered pairs when $i\in\{\frac{n}{2},n\}$. Since there exist $\frac{n-1}{2}$ forbidden pairs for odd $n$ and $\frac{n}{2}$ forbidden pairs for even $n$, it is evident that $t$ is not greater than the number of forbidden pairs and $G$ has $\Sigma_{r\in [t]}|D_{i_{r}}|=A_{g+1}(n,3,2)$ edges (see Lemma \ref{n32-bound}).

%If $2\mid n $ and $n\le g$, we choose $t=\frac{n}{2}$. In order to satisfy property (B), the set $\{i_{1},i_{2},\ldots,i_{t}\}$ is either $[\frac{n}{2}]$ or $[\frac{n}{2}+1,n]$. If $2\nmid n $ and $n\le g$, we choose $t=\frac{n-1}{2}$. In order to satisfy property (B), the set $\{i_{1},i_{2},\ldots,i_{t}\}$ is either $[\frac{n-1}{2}]$ or $[\frac{n+1}{2},n-1]$. If $2\nmid n $, $2\mid g$ and $n> g$, we choose $t=\frac{g}{2}$. The size of each $D_{i_{r}}$ is $n$ since $2\nmid n $.
%Hence, the number of edges of $G$ is $\Sigma_{r\in [t]}|D_{i_{r}}|=A_{g+1}(n,3,2)$ (see Lemma \ref{n32-bound}).
%For any edge $\{(x,a),(y,b)\}\in G$, we have $(x,y)\in \bigcup_{r\in [t]}D_{i_{r}}$ or $(y,x)\in \bigcup_{r\in [t]}D_{i_{r}}$. property (B) ensures that %the elements of the set $\bigcup_{r\in [t]}D_{i_{r}}$ are represented as unordered pairs consisting of distinct elements. Therefore,
%the graph $G$ does not contain parallel edges. %${G}$ is almost $1$-regular with $A_{g+1}(n,3,2)$ edges and does not contain parallel edges, then

\begin{Example}\label{SB}
{\rm Let $n=g=4$ and $t=\frac{n}{2}=2$.
By (\ref{E1})-(\ref{E3}),
$D_{1}=\{(1,2),(2,3),(3,4),(4,1)\}$, $D_{2}=\{(1,3),(2,4)\}$, $D_{3}=\{(1,4),(2,1),(3,2),(4,3)\}$, and $D_{4}=\{(3,1),(4,2)\}$.
Let $(a_1,b_1)=(1,2)$, $(a_{2},b_2)=(3,4)$.
Using Strategy B, we obtain the AR-graph
$$D_{1}(1,2)\cup D_{2}(3,4) = \{1_ 1\text{ }2_ 2, \text{ }2_ 1\text{ }3_ 2, \text{ }3_ 1\text{ }4_ 2, \text{ }4_ 1\text{ }1_ 2,\text{ }1_ 3\text{ }3_ 4, \text{ }2_ 3\text{ }4_ 4\}.$$}
\qed
\end{Example}

\section{Solution to $\mathrm{ODAR}(n,g)$s}

In this section, the complete solution to existence of $\mathrm{ODAR}(n,g)$s will be given. %for which  we will provide a complete solution.
In order to construct a series of AR-graphs over the vertex set $[n]\times [g]$, we take arithmetic operations frequently. For integers $x,y$, we use $x+y\in[n]\pmod{n}$ to denote that the sum~$x+y$ is reduced modulo $n$ to lie in $[n]$. To alleviate the notation, we even write this as $x+y$ if the context is clear. Similarly, for integers $i,j$, we use the notation $i+j\in[g]\pmod{g}$ or simply~$i+j$ if no confusion arises. %Prior to presenting the next theorem, we introduce a notation.
In the sequel let
\begin{equation}\label{E-1}
\delta_{i}=
\begin{cases}
0,& \text{ if } 2\mid i,\\
1,& \text{ if } 2\nmid i.\\
\end{cases}
\end{equation}

\begin{Theorem}\label{ODAR_even_n}
There exists an $\mathrm{ODAR}(n,g)$ for any positive integer $g$ and even $n$ with $n\le g$.
\end{Theorem}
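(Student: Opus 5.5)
The first step is to reduce the statement to a colouring problem. Since an AR-graph here has exactly ${n\choose 2}$ edges, is almost $1$-regular and has no parallel edges, it must contain \emph{exactly one} edge between every two parts $P_x,P_y$. So an $\mathrm{ODAR}(n,g)$ is equivalent to the following data. For every pair $\{x,y\}$ of parts with $x<y$, we need a bijection between $[g]\times[g]$ and the $g^2$ graphs, sending $(a,b)$ to the graph containing the edge $\{(x,a),(y,b)\}$. These bijections must be chosen so that in each graph every vertex $(z,c)$ is used at most once. I plan to index the $g^2$ graphs by $(u,v)\in[g]\times[g]$ and build each one with Strategy A, taking $s=n-1$ and a fixed one-factorization $F_1,\ldots,F_{n-1}$ of $K_n$ (which exists because $n$ is even). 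By Theorem \ref{SA}, each $G_{(u,v)}$ is then an AR-graph as soon as property (A) holds. What remains is to choose the symbols $c_{r,x},c_{r,y}$ as functions of $(u,v)$ so that two things hold: the graphs are edge-disjoint, and together they cover every edge.

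The natural first choice is the following. For $\{x,y\}\in F_r$ with $x<y$, put the edge $\{(x,u+r),(y,u+v+r)\}$ into $G_{(u,v)}$, working in $[g]\pmod g$. Coverage is then automatic: for a fixed pair $\{x,y\}$, the map $(u,v)\mapsto(u+r,\,u+v+r)$ is a bijection of $[g]^2$. So the $g^2$ graphs partition the edges between $P_x$ and $P_y$, and hence partition all of $K_{n\times g}$.

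The main obstacle is property (A). At a part $z$, the symbol used in $F_r$ is $u+r$ if $z$ is the smaller endpoint of its pair in $F_r$, and $u+v+r$ if it is the larger one. These collide exactly when $v\equiv r-r'$ for some $r\neq r'$ in which $z$ plays opposite roles. Since $n-1<g$, the $n-1$ values $r$ are distinct mod $g$, so collisions arise only from these role switches. To handle them I would separate the graphs by $v$. For $v=0$ the rule $c_{r,z}=u+r$ is independent of orientation, so (A) holds trivially; this gives the $g$ "diagonal" graphs. For $v\neq 0$ I would switch to Strategy B, as in Theorem \ref{SB}, with $t=n/2$ and the sets $D_1,\ldots,D_{n/2}$. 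These are oriented cyclically, so every part is the first coordinate exactly once and the second coordinate exactly once in each $D_i$, and the orientation issue disappears. The labels then come from pairs $\{a_r,b_r\}$ taken from the one-factors of a (near) one-factorization $\mathcal G$ of $K_g$, rotated by $u$.

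The hard bookkeeping is to check that, as $(u,v)$ and the choice of one-factor of $\mathcal G$ vary, each ordered pair $(x,y)$ receives every off-diagonal label pair $(a,b)$ with $a\neq b$ exactly once. The diagonal labels $(a,a)$ are covered exactly once by the $v=0$ Strategy A graphs. If the counting of off-diagonal graphs does not match $g^2-g$ directly, I would fall back on rotating the Strategy B labels through $[g]\pmod g$. That is, I would take $D_{i_r}(a_r+u,b_r+u)$, and let the choice of $\{i_1,\ldots,i_t\}$ alternate between $[n/2]$ and $[n/2+1,n]$; property (B) holds for both of these sets. This lets me cover both orientations of each pair of parts.
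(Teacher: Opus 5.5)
Your reduction is correct: an AR-graph here meets every pair of parts in exactly one edge. Your diagonal family is also correct. The graphs with $v=0$ and $c_{r,z}=u+r$ are exactly the paper's $A_u=\bigcup_{r\in[n-1]}F_r(u+r,u+r)$, and they use up all edges $\{(x,a),(y,a)\}$.

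The gap is the off-diagonal part, which is where the theorem actually lives. You never say which label pair sits on which $D_r$ in which of the remaining $g(g-1)$ graphs. You also postpone, as ``the hard bookkeeping'', the check that every edge with $a\neq b$ is used exactly once.

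Your fallback does not close this. Translating labels, $D_{i_r}(a_r+u,b_r+u)$ with $u\in[g]$, only puts on $D_{i_r}$ label pairs of difference $b_r-a_r$. Passing to the other half $[n/2+1,n]$ only gives the negative difference, because $D_r(a,b)=D_{r'}(b,a)$ for a forbidden pair $\{r,r'\}$. So with a fixed assignment of pairs to indices, $D_{i_r}$ receives at most $2g$ of the $g(g-1)$ ordered label pairs it needs, which is too few once $g\ge 4$. If you also vary the one-factor of $\mathcal G$, coverage requires that, for every $r$, the differences of the pairs placed on $D_r$ run through each nonzero residue mod $g$ exactly once. Your sketch neither states nor secures this condition. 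Translation is not the right degree of freedom.

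The missing idea has two parts. First, since $D_r(a,b)=D_{r'}(b,a)$ for forbidden pairs, the edges with distinct labels are partitioned exactly by the sets $D_r(a,b)$ with $r\in[n]$ and $a<b$. So you need every pair $\{a_{i,w},b_{i,w}\}$ (in increasing order) of every $G_i$ to land on every $D_r$, $r\in[n]$, exactly once. This is why both halves $[n/2]$ and $[n/2+1,n]$ are needed.

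Second, this is achieved by cyclically shifting the \emph{positions} of the pairs, not their labels. For $i\in[g-1+\delta_g]$ and $j\in[\frac{g-\delta_g}{2}]$ the paper takes
\[
B^1_{i,j}=\bigcup_{r\in[\frac n2]}D_r(a_{i,j+r},b_{i,j+r}),\qquad B^2_{i,j}=\bigcup_{r\in[\frac n2+1,n]}D_r(a_{i,j+r},b_{i,j+r}),
\]
with $j+r$ reduced modulo $\frac{g-\delta_g}{2}$. This works for three reasons:
\begin{itemize}
\item Because $n$ is even and $n\le g$, one has $n\le g-\delta_g$. So the $n/2$ pairs used in one graph are distinct, and Strategy B applies.
\item For fixed $i,r,w$ exactly one $j$ satisfies $j+r\equiv w$, which gives coverage.
\item The count is $(g-1+\delta_g)\cdot\frac{g-\delta_g}{2}\cdot 2=g(g-1)$, so coverage plus the edge count gives an exact decomposition.
\end{itemize}
For odd $g$ with the patterned near one-factorization $G_u=\{\{u+w,u-w\}\}$, your translation by $u$ merely permutes the $G_i$. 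It can therefore stand in for the index $i$, but the shift $j$ is still indispensable.
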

\proof Let $\mathcal{G}=\{{G}_{1},{G}_{2},\ldots,{G}_{g-1+\delta_g}\}$ be a one-factorization if $g$ is even or a near one-factorization if $g$ is odd, of the complete graph $K_{g}$ over $[g]$.
%List the pairs in ${G}_i$ by  %$${G}_{i}=\left\{\{a_{i,1},b_{i,1}\},\{a_{i,2},b_{i,2}\},\ldots,\big\{a_{i,\frac{g-\delta_g}{2}},b_{i,\frac{g-\delta_g}{2}}\big\}\right\},\ \text{where} \ a_{i,w}<b_{i,w}, w\in[\frac{g-\delta_g}{2}].$$
Assign labels to each element in ${G}_i$ by
\begin{align}\label{GGG}{G}_{i}=\left\{\{a_{i,w},b_{i,w}\}:w\in\left[\frac{g-\delta_g}{2}\right],a_{i,w}<b_{i,w} \right\}.\end{align}
%Assume w.l.o.g. that $a_{i,w}<b_{i,w}$ for $i\in[g-1+\delta_g]$ and $w\in[\frac{g-\delta_g}{2}]$.
%We distinguish between two cases depending on the parity of $n$. %we discuss two cases.

%\textbf{Case 1:} $n$ is even.

Let $\mathcal{F} = \{{F}_{1},{F}_{2},\ldots,{F}_{n-1}\}$ be a
one-factorization of the complete graph $K_{n}$ over $[n]$.
Since~$n$ is even and $n\le g$, we apply Strategy A and take $s=n-1$ by (\ref{s}).
Fix $i\in[g]$.
For any $\{x,y\}\in F_{r}$, $r\in[n-1]$, let $c_{r,x}=c_{r,y}=i+r\in [g]\pmod{g}$.
Since $n\le g$, for any fixed $z\in[n]$, the set $\{c_{r,z}:r\in[n-1]\}$ contains~$n-1$ distinct elements in $[g]$.
%where $i+r\in[g]\pmod{g}$. %is reduced modulo $g$ to lie in $[g]$.
So property (A) is satisfied.
Therefore, we get an AR-graph
$${A}_{i} = \bigcup_{r\in[n-1]}{F}_{r}(i+r,i+r),$$
where $i+r\in[g]\pmod{g}$. %$i+r$ modulo $g$ is reduced to lie in $[g]$.
%Obviously, these $g$ AR-graphs $A_{i}$, $i\in[g]$, are edge-disjoint graphs.
When $i$ runs over $[g]$, we obtain $g$ AR-graphs $A_{i}$, $i\in[g]$. Now, we run out of all edges of the form $\{(x,a),(y,a)\}$, $x,y\in[n]$, $x\neq y$, $a\in[g]$ as $\mathcal{F}$ is a one-factorization of $K_{n}$.

Since an $\mathrm{ODAR}(n,g)$ consists of $g^{2}$ subgraphs, we still need $g(g-1)$ pairwise disjoint AR-graphs by Lemma \ref{d3}.
Since $n$ is even and $n\le g$, apply Strategy B with $t={n\over 2}$ by (\ref{t}).
Clearly, $D_{1},D_{2}, \ldots,D_{{n\over 2}}$ are ${n\over 2}$ distinct sets satisfying property (B).
For a fixed $i\in [g-1+\delta_g]$, since ${G}_{i}$ is a one-factor or near one-factor of $K_{g}$, for any $j\in[\frac{g-\delta_{g}}{2}]$, we construct an AR-graph
$${B}_{i,j}^{1}=\bigcup_{r \in [\frac{n}{2}]}{D}_{r}(a_{i,j+r},b_{i,j+r}),$$
where $j+r\in[\frac{g-\delta_{g}}{2}]\pmod{\frac{g-\delta_{g}}{2}}$. %is reduced modulo $\frac{g-\delta_{g}}{2}$ to lie in  $[\frac{g-\delta_{g}}{2}]$.
Similarly, $D_{{{n}\over 2}+1},D_{{{n}\over 2}+2}, \ldots,D_{n}$ are ${n\over 2}$ distinct sets satisfying property (B).
Then, for any $i\in [g-1+\delta_g]$ and $j\in[\frac{g-\delta_{g}}{2}]$, we also construct an AR-graph
$${B}_{i,j}^{2}=\bigcup_{r\in[\frac{n}{2}+1,n]}{D}_{r}(a_{i,j+r},b_{i,j+r}),$$
where $j+r\in[\frac{g-\delta_{g}}{2}]\pmod{\frac{g-\delta_{g}}{2}}$. %is reduced modulo $\frac{g-\delta_{g}}{2}$ to lie in  $[\frac{g-\delta_{g}}{2}]$.
%Because $n\le g$, we have $\frac{g-\delta_{g}}{2}\geq\frac{n}{2}$. Thus the sets $\{a_{i,j+r},b_{i,j+r}:r\in [\frac{n}{2}]\}$ and $\{a_{i,j+r},b_{i,j+r}:r\in[\frac{n}{2}+1, n]\}$ contain $n$ distinct elements in $[g]$, respectively. %Now that Property (B) is satisfied,
%Hence, both ${B}_{i,j}^{1}$ and ${B}_{i,j}^{2}$ form  AR-graphs. %the largest almost $1$-regular subgraph of $K_{n\times g}$ without parallel edges.
Thus we obtain $(g-1+\delta_{g})\cdot \frac{g-\delta_{g}}{2}\cdot 2=g(g-1)$ AR-graphs~$B_{i,j}^{k}$,~$ i\in [g-1+\delta_{g}]$, $j\in [\frac{g-\delta_{g}}{2}]$, $k\in[2]$.

%Note that an AR-graph contains exactly $n\choose 2$ edges, $K_{n\times g}$ contains exactly $g^{2}{n\choose 2}$ edges and we construct $g^{2}$ AR-graphs.
Note that we have obtained $g^{2}$ AR-graphs, by Lemma \ref{d3}, we only need to prove that every edge $\{(x,a),(y,b)\}$ in $K_{n\times g}$ occurs at least once in the collection $\{{A}_{i}:i\in[g]\}\bigcup \{{B}_{i,j}^{k}:i \in [g-1+\delta_{g}], j\in [\frac{g-\delta_{g}}{2}], k\in[2]\}$. Thus, these $g^{2}$ AR-graphs are edge-disjoint and form an $\mathrm{ODAR}(n,g)$. \vspace{-0.2cm}
%Next, we only need to prove that every edge $\{(x,a),(y,b)\}$ in $K_{n\times g}$ occurs exactly once in the collection $\{{A}_{i}:i\in[g]\}\bigcup \{{B}_{i,j}^{k}:i \in [g-1+\delta_{g}], j\in [\frac{g-\delta_{g}}{2}], k\in[2]\}$ and hence an $\mathrm{ODAR}(n,g)$ is constructed are required.\vspace{-0.2cm}
\begin{itemize}
 \item [$(1)$] When $a=b$, there exists an $r\in[n-1]$ such that $\{x,y\}\in {F}_{r}$ since $\mathcal{F}$ is a one-factorization of $K_{n}$.
 It follows that $\{(x,a),(y,b)\}\in {F}_{r}(a,a)\subseteq{A}_{i}$, where $i\in[g]$ and~$i+r\equiv a\pmod{g}$.\vspace{-0.2cm}
 \item [$(2)$] When $a< b$, there exists an $i\in[g-1+\delta_{g}]$ and $w\in[\frac{g-\delta_{g}}{2}]$ such that $\{a,b\}=\{a_{i,w},b_{i,w}\}\in {G}_{i}$ since $\mathcal{G}$ is a one-factorization or near one-factorization of $K_{g}$.
     There exists an $r\in[n]$ such that $(x,y)\in {D}_{r}$.
     If $r\in [\frac{n}{2}]$, then $\{(x,a),(y,b)\}\in {D}_{r}(a_{i,w},b_{i,w})\subseteq{B}_{i,j}^{1}$, where $j\in[\frac{g-\delta_{g}}{2}]$ and $j+r\equiv w\pmod{\frac{g-\delta_{g}}{2}}$.
     If $r\in[\frac{n}{2}+1, n]$, then $\{(x,a),(y,b)\}\in {D}_{r}(a_{i,w},b_{i,w})\subseteq{B}_{i,j}^{2}$, where $j\in[\frac{g-\delta_{g}}{2}]$ and $j+r\equiv w\pmod{\frac{g-\delta_{g}}{2}}$.
\end{itemize}\vspace{-0.2cm}
Then we obtained an $\mathrm{ODAR}(n,g)$.
\qed

\begin{Theorem}\label{ODAR_odd_n}
There exists an $\mathrm{ODAR}(n,g)$ for any positive integer $g$ and odd $n$ with $n\le g$.
\end{Theorem}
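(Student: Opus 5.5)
The plan is to follow the proof of Theorem~\ref{ODAR_even_n} closely. Strategy~A will handle the edges whose two endpoints carry the same symbol, and Strategy~B the edges whose endpoints carry different symbols. This time we use $s=n$ and $t=\frac{n-1}{2}$, as prescribed by (\ref{s}) and (\ref{t}) for odd $n\le g$.

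First, let $\mathcal{F}=\{F_1,\ldots,F_n\}$ be a near one-factorization of $K_n$ over $[n]$, obtained from the one-factorization of $K_{n+1}$ in Section~1 by deleting a vertex. Also let $\mathcal{G}=\{G_1,\ldots,G_{g-1+\delta_g}\}$ be a (near) one-factorization of $K_g$, labelled as in (\ref{GGG}). For each $i\in[g]$ and each $\{x,y\}\in F_r$, put $c_{r,x}=c_{r,y}=i+r\in[g]\pmod g$. Since $n\le g$, the $n$ values $i+r$ with $r\in[n]$ are distinct modulo $g$ (when $n=g$ they run over all of $[g]$), so property~(A) holds. Theorem~\ref{SA} then shows that
$$A_i=\bigcup_{r\in[n]}F_r(i+r,i+r)$$
is an AR-graph. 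A vertex that is isolated in $F_r$ simply receives no edge, which causes no trouble. Because $\mathcal{F}$ covers every pair of $[n]$, the $g$ graphs $A_i$ together use every edge $\{(x,a),(y,a)\}$.

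Second, for $n$ odd every $D_r$ with $r\in[n-1]$ has $n$ ordered pairs, and together these sets partition all ordered pairs of distinct elements of $[n]$. The forbidden pairs are $\{r,n-r\}$. Hence both $\{D_1,\ldots,D_{\frac{n-1}{2}}\}$ and $\{D_{\frac{n+1}{2}},\ldots,D_{n-1}\}$ satisfy property~(B), each with $t=\frac{n-1}{2}$ sets. Set $m=\frac{g-\delta_g}{2}$. For $i\in[g-1+\delta_g]$ and $j\in[m]$, define
$$B^1_{i,j}=\bigcup_{r\in[\frac{n-1}{2}]}D_r(a_{i,j+r},b_{i,j+r}),\qquad B^2_{i,j}=\bigcup_{r\in[\frac{n+1}{2},n-1]}D_r(a_{i,j+r},b_{i,j+r}),$$
with $j+r$ reduced modulo $m$ into $[m]$. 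Because $n$ is odd and $n\le g$, we have $\frac{n-1}{2}\le\lfloor g/2\rfloor=m$. Each family of indices $r$ consists of $\frac{n-1}{2}$ consecutive integers, so the indices $j+r$ are distinct modulo $m$. Therefore the chosen pairs $\{a_{i,j+r},b_{i,j+r}\}$ are distinct edges of the same $G_i$, and Theorem~\ref{SB} shows that each $B^k_{i,j}$ is an AR-graph. This produces $(g-1+\delta_g)\cdot m\cdot 2=g(g-1)$ AR-graphs, so $g^2$ in total.

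Finally, we use the counting argument from the even case. Each AR-graph has $\binom n2$ edges and $K_{n\times g}$ has $g^2\binom n2$ edges, so by Lemma~\ref{d3} it suffices to show that every edge is covered. Edges with $a=b$ were handled above. For an edge $\{(x,a),(y,b)\}$ with $a<b$:
\begin{itemize}
\item choose $i$ and $w$ with $\{a,b\}=\{a_{i,w},b_{i,w}\}$;
\item choose $r\in[n-1]$ with $(x,y)\in D_r$;
\item take $k=1$ if $r\le\frac{n-1}{2}$ and $k=2$ otherwise, and take $j\equiv w-r\pmod m$.
\end{itemize}
Then the edge lies in $D_r(a_{i,w},b_{i,w})\subseteq B^k_{i,j}$; the case $a>b$ reduces to this by swapping the roles of $x$ and $y$. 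I expect the only delicate step to be checking the inequalities $n\le g$ for property~(A) and $\frac{n-1}{2}\le m$ for distinctness modulo $m$. These are exactly where the hypothesis $n\le g$ with $n$ odd is used, including the boundary case $n=g$ with $g$ odd, where $m=\frac{g-1}{2}=t$ and each $B^k_{i,j}$ uses all pairs of $G_i$.
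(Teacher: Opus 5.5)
Your proposal is correct and is essentially the paper's own proof. It uses the same near one-factorization with Strategy A ($s=n$, $c_{r,x}=c_{r,y}=i+r$) to get the $g$ graphs $A_i$, and the same two Strategy B families $B^1_{i,j}$, $B^2_{i,j}$ built from $D_1,\dots,D_{\frac{n-1}{2}}$ and $D_{\frac{n+1}{2}},\dots,D_{n-1}$ to get the remaining $g(g-1)$; a coverage-plus-counting argument then finishes. Your explicit check that $\frac{n-1}{2}\le\lfloor g/2\rfloor$, which makes the indices $j+r$ distinct modulo $m$, fills in a detail that the paper leaves implicit under ``similar to the proof of Theorem~\ref{ODAR_even_n}''.
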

\proof As in the previous proof, let $\mathcal{G}=\{{G}_{1},{G}_{2},\ldots,{G}_{g-1+\delta_g}\}$ be a one-factorization or a near one-factorization  of $K_{g}$ over $[g]$, where ${G}_i$ is the same as (\ref{GGG}).

Let $\mathcal{F}=\{{F}_{1},{F}_{2},\ldots,{F}_{n}\}$ be a near one-factorization of the complete graph $K_{n}$ over $[n]$.
Since $n$ is odd and $n\le g$, we apply Strategy A with $s=n$ by (\ref{s}).
Fix $i\in[g]$.
For any $\{x,y\}\in F_{r}$, $r\in[n]$, let $c_{r,x}=c_{r,y}=i+r\in [g]\pmod{g}$.
Since $n\le g$, %the set $\{i+r:r\in[ n]\}$ contains $n$ distinct elements in $[g]$, where $i+r\in[g]\pmod{g}$. %is reduced modulo $g$ in $[g]$.
property (A) is satisfied.
Then, we construct an AR-graph
%Applying Strategy A with $s=n$ yields that every
$${A}_{i} = \bigcup_{r\in[ n]}{F}_{r}(i+r,i+r).$$
Therefore, we obtain $g$ AR-graphs $A_{i}$, $i\in[g]$.

Next, we construct further $g(g-1)$ edge-disjoint AR-graphs. Apply Strategy B with $t={n-1\over 2}$, as given by~(\ref{t}).
%with $t={n-1\over 2}$.
%Let $\mathcal{G}_{i}=\{\{a_{i,1},b_{i,1}\},$ $\{a_{i,2},b_{i,2}\},\ldots,\{a_{i,\frac{g-\delta_g}{2}},b_{i,\frac{g-\delta_g}{2}}\}\}$, where $a_{i,w}<b_{i,w}$, $1\leq w \leq \frac{g-\delta_g}{2}$.
%Evidently, $D_{1},D_{2}, \ldots,D_{{{n-1}\over 2}}$ and the collection $D_{{{n+1}\over 2}},D_{{{n+3}\over 2}}, \ldots,D_{n-1}$ are ${{n-1}\over 2}$ distinct sets satisfying property (B), respectively.
Evidently, $D_{1},D_{2}, \ldots,D_{{{n-1}\over 2}}$ are ${{n-1}\over 2}$ distinct sets satisfying property~(B).
For a fixed $i\in [g-1+\delta_g]$, since ${G}_{i}$ is a one-factor or near one-factor of $K_{g}$, for any $j\in[\frac{g-\delta_{g}}{2}]$, we construct an AR-graph
$${B}_{i,j}^{1}=\bigcup_{r \in [\frac{n-1}{2}]}{D}_{r}(a_{i,j+r},b_{i,j+r}),$$
where $j+r\in[\frac{g-\delta_{g}}{2}]\pmod{\frac{g-\delta_{g}}{2}}$.
And $D_{{{n+1}\over 2}},D_{{{n+3}\over 2}}, \ldots,D_{n-1}$ are also ${{n-1}\over 2}$ distinct sets satisfying property (B).
Thus, for any $i\in [g-1+\delta_g]$ and $j\in[\frac{g-\delta_{g}}{2}]$ , we construct an AR-graph
$${B}_{i,j}^{2}=\bigcup_{r\in[\frac{n+1}{2},n-1]}{D}_{r}(a_{i,j+r},b_{i,j+r}),$$
where $j+r\in[\frac{g-\delta_{g}}{2}]\pmod{\frac{g-\delta_{g}}{2}}$. %is reduced modulo $\frac{g-\delta_{g}}{2}$ to lie in  $[\frac{g-\delta_{g}}{2}]$.

Similar to the proof of Theorem \ref{ODAR_even_n}, we may show that the collection $\{{A}_{i}:i\in[g]\}\bigcup \{{B}_{i,j}^{k}:i \in [g-1+\delta_{g}], j\in[\frac{g-\delta_{g}}{2}], k\in[2]\}$ is an $\mathrm{ODAR}(n,g)$.
\qed

\begin{Example}\label{E3.1}
{\rm For $n=g=4$, take $F_1=\{\{1 ,2\},\{3,4\}\}$, $F_{2}=\{\{1,3\},\{2 ,4\}\}$, $F_{3}=\{\{1,4\},$ $\{2,3\}\}$ as a one-factorization of $K_{4}$.
 Let $s=n-1=3$.
For any $x\in [4]$, take $c_{r,x}=i+r\in[4]\pmod{4}$, $r\in[3]$ and hence $\{c_{r,x}:r\in[3]\}$ contains $3$ distinct elements in $[4]$. %the set
Then using Strategy A, we obtain the AR-graph
\begin{equation*}
\begin{split}
        A_{1} &=F_{1}(2,2)\cup F_{2}(3,3)\cup F_{3}(4,4) \\
        &=\{1_ 2\text{ }2_ 2,\text{ }3_ 2\text{ }4_ 2, \text{ }1_ 3\text{ }3_ 3,\text{ }2_ 3\text{ }4_ 3,\text{ }1_ 4\text{ }4_ 4,\text{ }2_ 4\text{ }3_ 4\},\\
\end{split}
\end{equation*}
where $F_{1}(2,2)=\{1_ 2\text{ }2_ 2,\text{ }3_ 2\text{ }4_ 2\}$, $F_{2}(3,3)=\{1_ 3\text{ }3_ 3,\text{ }2_ 3\text{ }4_ 3\}$ and $F_{3}(4,4)=\{1_ 4\text{ }4_ 4,\text{ }2_ 4\text{ }3_ 4\}$. Similarly, we obtain the following AR-graphs.
\begin{center}
\begin{tabular}{l l l l l l l l }
%$1_ 2\text{ }2_ 2$& $3_ 2\text{ }4_ 2$&
%$1_ 3\text{ }3_ 3$& $2_ 3\text{ }4_ 3$&
%$1_ 4\text{ }4_ 4$& $2_ 4\text{ }3_ 4$\\
$A_{2}=\{1_ 3\text{ }2_ 3,\text{ }3_ 3\text{ }4_ 3,\text{ }1_ 4\text{ }3_ 4,\text{ }2_ 4\text{ }4_ 4,\text{ }1_ 1\text{ }4_ 1,\text{ }2_ 1\text{ }3_ 1\}$,\\
%$1_ 3\text{ }2_ 3$& $3_ 3\text{ }4_ 3$&
%$1_ 4\text{ }3_ 4$& $2_ 4\text{ }4_ 4$&
%$1_ 1\text{ }4_ 1$& $2_1\text{ }3_ 1$\\
$A_{3}=\{1_ 4\text{ }2_ 4,\text{ }3_ 4\text{ }4_ 4,\text{ }1_ 1\text{ }3_ 1,\text{ }2_ 1\text{ }4_ 1,\text{ }1_ 2\text{ }4_ 2,\text{ }2_2\text{ }3_ 2\}$,\\
%$1_ 4\text{ }2_ 4$& $3_ 4\text{ }4_ 4$&
%$1_ 1\text{ }3_ 1$& $2_ 1\text{ }4_ 1$&
%$1_ 2\text{ }4_2$& $2_ 2\text{ }3_ 2$\\
$A_{4}=\{1_ 1\text{ }2_ 1,\text{ }3_ 1\text{ }4_ 1,\text{ }1_ 2\text{ }3_ 2,\text{ }2_ 2\text{ }4_ 2,\text{ }1_ 3\text{ }4_ 3,\text{ }2_ 3\text{ }3_ 3\}$.\\
%$1_ 1\text{ }2_ 1$& $3_ 1\text{ }4_ 1$&
%$1_ 2\text{ }3_ 2$& $2_ 2\text{ }4_ 2$&
%$1_ 3\text{ }4_ 3$& $2_ 3\text{ }3_ 3$\\
\end{tabular}
\end{center}

Let $t=\frac{n}{2}=2$,
$D_{1}=\{(1,2),(2,3),(3,4),(4,1)\}$, $D_{2}=\{(1,3),(2,4)\}$, $D_{3}=\{(1,4),(2,1),$ $(3,2),(4,3)\}$, and $D_{4}=\{(3,1),(4,2)\}$.
Take $G_i=F_{i}$, $i\in[3]$.
Here, in $G_{1}$, $a_{1,1}=1$, $b_{1,1}=2$, $a_{1,2}=3$ and $b_{1,2}=4$. Analogous notations apply to $G_2$ and $G_{3}$.
Then using Strategy B, we obtain the AR-graph
\begin{equation*}\begin{split}
B_{1,1}^{1}&=D_{1}(a_{1,2},b_{1,2})\cup D_{2}(a_{1,1},b_{1,1})\\
&=\{1_ 3\text{ }2_ 4, \text{ }2_ 3\text{ }3_ 4, \text{ }3_ 3\text{ }4_ 4, \text{ }4_ 3\text{ }1_ 4,\text{ }1_ 1\text{ }3_ 2, \text{ } 2_ 1\text{ }4_ 2\},\\
\end{split}
\end{equation*}
where $D_{1}(a_{1,2},b_{1,2})=D_{1}(3,4)=\{1_ 3\text{ }2_ 4, \text{ }2_ 3\text{ }3_ 4, \text{ }3_ 3\text{ }4_ 4, \text{ }4_ 3\text{ }1_ 4\}$ and $D_{2}(a_{1,1},b_{1,1})=D_{2}(1,2)=\{1_ 1\text{ }3_ 2, \text{ } 2_ 1\text{ }4_ 2\}$. Likewise,  we obtain the following AR-graphs.

$$B_{1,2}^{1}=D_{1}(1,2)\cup D_{2}(3,4) = \{1_ 1\text{ }2_ 2, \text{ }2_ 1\text{ }3_ 2, \text{ }3_ 1\text{ }4_ 2, \text{ }4_ 1\text{ }1_ 2,\text{ }1_ 3\text{ }3_ 4,\text{ } 2_ 3\text{ }4_ 4\},$$
$$B_{2,1}^{1}=D_{1}(2,4)\cup D_{2}(1,3)=\{1_ 2\text{ }2_ 4, \text{ }2_ 2\text{ }3_ 4, \text{ }3_ 2\text{ }4_ 4, \text{ }4_ 2\text{ }1_ 4,\text{ }1_ 1\text{ }3_ 3, \text{ }2_ 1\text{ }4_ 3\},$$
$$B_{2,2}^{1}=D_{1}(1,3)\cup D_{2}(2,4)=\{1_ 1\text{ }2_ 3, \text{ }2_ 1\text{ }3_ 3, \text{ }3_ 1\text{ }4_ 3, \text{ }4_ 1\text{ }1_ 3,\text{ }1_ 2\text{ }3_ 4, \text{ }2_ 2\text{ }4_ 4\},$$
$$B_{3,1}^{1}=D_{1}(2,3)\cup D_{2}(1,4)=\{1_ 2\text{ }2_ 3, \text{ }2_ 2\text{ }3_ 3, \text{ }3_ 2\text{ }4_ 3, \text{ }4_ 2\text{ }1_ 3,\text{ }1_ 1\text{ }3_ 4, \text{ }2_ 1\text{ }4_ 4\},$$
$$B_{3,2}^{1}=D_{1}(1,4)\cup D_{2}(2,3)=\{1_ 1\text{ }2_ 4, \text{ }2_ 1\text{ }3_ 4, \text{ }3_ 1\text{ }4_ 4, \text{ }4_ 1\text{ }1_ 4,\text{ }1_ 2\text{ }3_ 3, \text{ }2_ 2\text{ }4_ 3\},$$
$$B_{1,1}^{2}=D_{3}(3,4)\cup D_{4}(1,2)=\{1_ 3\text{ }4_ 4,\text{ } 2_ 3\text{ }1_ 4, \text{ }3_ 3\text{ }2_ 4, \text{ }4_ 3\text{ }3_ 4,\text{ }3_ 1\text{ }1_ 2, \text{ } 4_ 1\text{ }2_ 2\},$$
$$B_{1,2}^{2}=D_{3}(1,2)\cup D_{4}(3,4)=\{1_ 1\text{ }4_ 2,\text{ } 2_ 1\text{ }1_ 2, \text{ }3_ 1\text{ }2_ 2, \text{ }4_ 1\text{ }3_ 2,\text{ }3_ 3\text{ }1_ 4, \text{ } 4_ 3\text{ }2_ 4\},$$
$$B_{2,1}^{2}=D_{3}(2,4)\cup D_{4}(1,3)=\{1_ 2\text{ }4_ 4, \text{ }2_ 2\text{ }1_ 4, \text{ }3_ 2\text{ }2_ 4,\text{ } 4_ 2\text{ }3_ 4,\text{ }3_ 1\text{ }1_ 3, \text{ }4_ 1\text{ }2_ 3\},$$
$$B_{2,2}^{2}=D_{3}(1,3)\cup D_{4}(2,4)=\{1_ 1\text{ }4_ 3, \text{ }2_ 1\text{ }1_ 3, \text{ }3_ 1\text{ }2_ 3, \text{ }4_ 1\text{ }3_ 3,\text{ }3_ 2\text{ }1_ 4, \text{ } 4_ 2\text{ }2_ 4\},$$
$$B_{3,1}^{2}=D_{3}(2,3)\cup D_{4}(1,4)=\{1_ 2\text{ }4_ 3, \text{ }2_ 2\text{ }1_ 3, \text{ }3_ 2\text{ }2_ 3, \text{ }4_ 2\text{ }3_ 3,\text{ }3_ 1\text{ }1_ 4, \text{ } 4_ 1\text{ }2_ 4\},$$
$$B_{3,2}^{2}=D_{3}(1,4)\cup D_{4}(2,3)=\{1_ 1\text{ }4_ 4, \text{ }2_ 1\text{ }1_ 4, \text{ }3_ 1\text{ }2_ 4, \text{ }4_ 1\text{ }3_ 4,\text{ }3_ 2\text{ }1_ 3, \text{ }4_ 2\text{ }2_ 3\}.$$
%are 12 disjoint AR-graphs.
%Below we list 12 largest AR-graphs of $K_{4\times 4}$ by Strategy B.
 %$$D_{1}(1,2)\cup D_{2}(3,4);D_{1}(3,4)\cup D_{2}(1,2);D_{3}(1,2)\cup D_{4}(3,4);D_{3}(3,4)\cup D_{4}(1,2);$$
%$$D_{1}(1,3)\cup D_{2}(2,4);D_{1}(2,4)\cup D_{2}(1,3);D_{3}(1,3)\cup D_{4}(2,4);D_{3}(2,4)\cup D_{4}(1,3);$$
%$$D_{1}(1,4)\cup D_{2}(2,3);D_{1}(2,3)\cup D_{2}(1,4);D_{3}(1,4)\cup D_{4}(2,3);D_{3}(2,3)\cup D_{4}(1,4).$$

%Now, we construct the largest AR-graphs of $K_{4\times 4}$ by Strategy B.

Then, the above 4 AR-graphs by Strategy A and 12 AR-graphs by Strategy B together form an $\mathrm{ODAR}(4,4)$.
}\qed
\end{Example}

\section{Constructions for OF$(n,g)$s}

In view of Theorems \ref{ODAR_even_n} and \ref{ODAR_odd_n}, we are left to deal with optimal decompositions of $K_{n\times g}$ with $n>g$ and $gn$ even, i.e., one-factorizations of  $K_{n\times g}$ without parallel edges in each one-factor. We will display several direct constructions, a doubling construction, and a product construction.

%we begin with the constructions of ODAR$_3(n,g)$s with $n\le g$, for which  we will provide a complete solution.

\subsection{Constructions for $n=g+1$}

\begin{Lemma}\label{OF_n_odd}
%There exists a $\mathrm{TOC}_{n}(n,3,2)$ for any odd $n$.
There exists an $\mathrm{OF}(g+1,g)$ for any even $g$.
\end{Lemma}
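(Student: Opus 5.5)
The plan is to exploit the rigid structure forced when $n=g+1$ with $g$ even. An $\mathrm{OF}(g+1,g)$ consists of $g(n-1)=g^2$ one-factors, each with $\frac{gn}{2}=\binom{n}{2}$ edges. Since no one-factor may contain parallel edges, each one-factor must use every pair of parts $\{P_x,P_y\}$ exactly once. Because $n$ is odd and $g$ is even, Strategy B is available with $t=\frac{g}{2}=\frac{n-1}{2}$ by (\ref{t}). Thus $t$ equals the number of forbidden pairs $\{r,n-r\}$, $r\in[\frac{g}{2}]$, and Strategy B picks exactly one $D_i$ from each forbidden pair. I will split the edges of $K_{n\times g}$ into the edges $\{(x,a),(y,b)\}$ with $a\ne b$, covered by $g(g-1)$ one-factors built by Strategy B, and the edges with $a=b$, covered by $g$ further one-factors built by a cyclic variant of Strategy A.

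For the $a\neq b$ part, take a one-factorization $\mathcal{G}=\{G_1,\dots,G_{g-1}\}$ of $K_g$, labelled as in (\ref{GGG}). For fixed $k$, all edges with symbol pair in $G_k$ are the sets $D_i(a_{k,w},b_{k,w})$ with $i\in[n-1]=[g]$ and $w\in[\frac g2]$. Note that $D_i(a,b)$ and $D_{n-i}(a,b)$ are different edge sets, since the lower symbol $a$ sits on the first coordinate. Following the proofs of Theorems \ref{ODAR_even_n} and \ref{ODAR_odd_n}, for $j\in[\frac g2]$ I set
$$B^1_{k,j}=\bigcup_{r\in[\frac g2]}D_r(a_{k,j+r},b_{k,j+r})\quad\text{and}\quad B^2_{k,j}=\bigcup_{r\in[\frac g2]}D_{n-r}(a_{k,j+r},b_{k,j+r}),$$
with $j+r$ reduced into $[\frac g2]$. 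Property (B) holds, and the symbols used within one graph are distinct because they come from distinct edges of the one-factor $G_k$. Hence Theorem \ref{SB} shows that each of these is an AR-graph, that is, a one-factor without parallel edges. Exactly as in case (2) of Theorem \ref{ODAR_even_n}, these $2\cdot\frac g2\cdot(g-1)=g(g-1)$ graphs partition all edges with $a\ne b$.

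For the $a=b$ part I need $g$ one-factors $A_1,\dots,A_g$ partitioning the edges $\{(x,a),(y,a)\}$. I would use the cyclic near one-factorization $F_r=\{\{r+u,r-u\}\}$ of $K_n$ over $\mathbb{Z}_n$. The edge $\{x,y\}$ lies in $F_r$ with $2r\equiv x+y$, and $F_r$ misses vertex $r$. The natural candidate is
$$A_i=\big\{\{(x,a),(y,a)\}: x+y\equiv 2(a+i)\pmod n\big\}.$$
Each symbol $a$ then uses a full near one-factor, and the $A_i$ clearly partition the monochromatic edges as $i$ ranges over $n$ residues. Within $A_i$, however, the vertices $(a+i,a)$, $a\in[g]$, are left unmatched. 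These are $g$ vertices lying in the $g$ distinct parts other than $P_i$. The count also shows that there are $n=g+1$ such classes rather than $g$.

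The main obstacle is therefore reconciling these two parts. Strategy A in its literal form fails for odd $n>g$, as the paper itself notes, so some exchange between the two families is needed. What I would try first is to merge the $n$ graphs $A_i$ into only $g$ one-factors. Concretely, I would drop one residue class, say $i_0$, and distribute its edges among the other classes. Simultaneously, in each $A_i$ the $g$ unmatched vertices $(a+i,a)$ would be matched among themselves by a one-factor of $K_g$ on the symbols. This pairs the vertices $(a+i,a)$ and $(a'+i,a')$ with $a\ne a'$, using edges borrowed from the $B$-family. The borrowed edges must then be returned to the $B$-graphs by swapping them against edges of $A_{i_0}$ in a consistent cyclic pattern, with $i$ and $k$ tied together through $\mathcal{G}$. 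I would check on the $\mathrm{OF}(3,2)$ and $\mathrm{OF}(5,4)$ cases first, to guess the swap rule, before proving that it preserves both the one-factor property and the absence of parallel edges.
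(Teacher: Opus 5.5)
\textbf{There is a genuine gap: the monochromatic half is never constructed.} Your treatment of the edges with $a\neq b$ is correct. The graphs $B^1_{k,j},B^2_{k,j}$ are Strategy-B one-factors that partition the bichromatic edges. They are essentially the paper's $C^1_{i,k},C^2_{i,k}$, which use $D_{n-2w}$ and $D_{2w}$, i.e.\ they split the forbidden pairs by parity rather than into $[\frac{g}{2}]$ and $[\frac{g}{2}+1,g]$. The monochromatic part, however, ends as a search plan, and the bookkeeping you aim for cannot work. Because $n$ is odd, a one-factor contains at most $\frac{n-1}{2}$ edges inside each symbol class $\{(x,a):x\in[n]\}$. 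Hence it contains at most $\frac{g(n-1)}{2}$ monochromatic edges, whereas there are $g\binom{n}{2}=n\cdot\frac{g(n-1)}{2}$ of them. So at least $n=g+1$ one-factors must carry monochromatic edges, and you cannot keep all $g(g-1)$ graphs $B^l_{k,j}$ intact. The edges of a dropped class $A_{i_0}$ also cannot be spread over the other $A_i$, which are already saturated. For pushing them into the $B$-graphs instead, you give no rule.

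\textbf{The missing idea is to keep all $n$ classes and sacrifice one $B$-graph instead.} In $A_i$, symbol $a$ uses the near one-factor $F_{a+i}$, so the only pairs of parts still unused are those of $F_i$. The unmatched vertex of part $i+w$ has symbol $w$, so the completion is forced: join $(i+w,w)$ to $(i-w,n-w)$ for $w\in[\frac{g}{2}]$. This uses each pair of $F_i$ exactly once, so the completed $A_i$ is a one-factor without parallel edges; this is the paper's $E_r$. Over all $i\in[n]$, the completion edges form exactly $\bigcup_{w\in[g/2]}D_{n-2w}(w,n-w)$. That is a single Strategy-B one-factor built on the symbol one-factor $\{\{w,n-w\}:w\in[\frac{g}{2}]\}$ of $K_g$, namely the paper's $C^1_{1,\frac{n-1}{2}}$. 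Omitting it gives $(g+1)+g(g-1)-1=g^2$ one-factors. For this you must take $G_1=\{\{w,n-w\}\}$ and use the parity split. With your split and $g\ge 4$, the index set $\{n-2w\}=\{1,3,\dots,g-1\}$ is neither $[\frac{g}{2}]$ nor $[\frac{g}{2}+1,g]$, so the forced graph is not among your $B$-graphs.
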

\proof Let $g$ be even and $n=g+1$. For $j\in [n]$, define $${F}_{j}=\bigg\{\big\{j+w,j-w\big\}:w\in\left[\frac{n-1}{2}\right]\bigg\},$$
where $j+w,j-w\in[n]\pmod{n}$. Obviously, ${\cal F}=\{{F}_{1},{F}_{2},\ldots,{F}_{n}\}$ is a near one-factorization of $K_{n}$, and in ${F}_{j}$, vertex $j$ is isolated. On the other hand, there exists a one-factorization of $K_{g}$, say
${\cal G}=\{{G}_{1},{G}_{2},\ldots,{G}_{n-2}\}$.
We assign labels to each element in ${G}_i$ by
$${G}_{i}=\bigg\{\big\{a_{i,w},b_{i,w}\big\}:w\in \left[\frac{n-1}{2}\right],a_{i,w}<b_{i,w}\bigg\},\ i\in [n-2]=[g-1],$$ and particularly $${G}_{1}=\bigg\{\big\{w,n-w\big\}: w\in\left[\frac{n-1}{2}\right]\bigg\}.$$
%Assume w.l.o.g. that $a_{i,w}<b_{i,w}$ for $i\in[n-2]$ and $w\in[\frac{n-1}{2}]$.
Clearly $F_n=G_1$.

Note that
each one-factor of $K_{n\times g}$ has $\frac{gn}{2}=\frac{(n-1)n}{2}={n\choose 2}$ edges and a one-factorization of $K_{n\times g}$ consists of $g^{2}=(n-1)^{2}$  one-factors by Lemma \ref{d3}.
First, we construct $(n-1)(n-2)$ edge-disjoint one-factors without parallel edges.
For $i\in [n-2]$, $k\in [\frac{n-1}{2}]$, define two $n$-partite graphs with edge sets
$${C}_{i,k}^{1}=\bigg\{\big\{(j+w,a_{i,w+k}),(j-w,b_{i,w+k})\big\}:w\in \left[\frac{n-1}{2}\right],j\in [n]\bigg\}$$
and
$${C}_{i,k}^{2}=\bigg\{\big\{(j+w,b_{i,w+k}),(j-w,a_{i,w+k})\big\}:w\in \left[\frac{n-1}{2}\right],j\in [n]\bigg\},$$
where %the first components $j+w$ and $j-w$ are reduced by modulo $n$ to lie in $[n]$ and
the subscript $w+k\in [\frac{n-1}{2}]\pmod{\frac{n-1}{2}}$.
%$w+k$ is reduced by modulo $\frac{n-1}{2}$ to lie in $[\frac{n-1}{2}]$.
Fix $i\in [n-2]$, $k\in [\frac{n-1}{2}]$ and $l\in [2]$.
Since ${\cal F}$ is a near
one-factorization of $K_{n}$, then ${C}_{i,k}^{l}$ has $n\choose 2$ edges and does not contain parallel edges.
For any vertex $(x,a)\in [n]\times [g]$, there exists a $w\in[\frac{n-1}{2}]$ such that $a_{i,w+k}=a$ or $b_{i,w+k}=a$ since ${G}_{i}$ is a one-factor of $K_{g}$.
The set $\{j+w:j\in[n]\}=\{j-w:j\in[n]\}=[n]$;
hence, there exists a $j\in [n]$ such that $j+w=x$ or $j-w=x$.
Then, there exists a unique edge in ${C}_{i,k}^{l}$ that contains  $(x,a)$.
Therefore, ${C}_{i,k}^{l}$ is a one-factor of $K_{n\times g}$ without parallel edges.
Thus far,
for all edges $\{(x,a),(y,b)\}$ in $K_{n\times g}$ with $a\neq b$, there exist $w\in[\frac{n-1}{2}]$ and $j\in[n]$ such that $\{x,y\} =\{j+w,j-w\}$  and then we have some $i\in[n-2]$ and $k\in[\frac{n-1}{2}]$ such that $(a,b) =(a_{i,w+k},b_{i,w+k})$ if $a<b$ or $(a,b) =(b_{i,w+k},a_{i,w+k})$ if $a>b$, due to the near one-factorization ${\cal F}$ and the one-factorization ${\cal G}$.
There exist $g(g-1){n\choose 2}$ edges of the form $\{(x,a),(y,b)\}$ in $K_{n\times g}$ with $a\neq b$.
%It then follows that all such edges can be found in the one-factors constructed above.
It then follows that all such edges can be found exactly once in the one-factors constructed above.

Next, we construct another $n$ one-factors of $K_{n\times g}$ without parallel edges.
For $r \in[n]$, define
\begin{align*}
{E}_{r}&=\bigg\{\big\{(r+j+w,j),(r+j-w,j)\big\}:w\in\left[\frac{n-1}{2}\right],j \in[g]=[n-1]\bigg\}\\
&\quad \ \ \bigcup
\bigg\{\big\{(r+w,w),(r-w,n-w)\big\}:w\in\left[\frac{n-1}{2}\right]\bigg\}.
\end{align*}
%where all the first components lie in $[n]$ by modulo $n$.
Fix $r \in[n]$. Note that we also used the near one-factorization ${\cal F}$ as above. Thus, ${E}_{r}$ has $n\choose 2$ edges and does not contain parallel edges. For any vertex $(x,a)\in [n]\times [g]$, $\big\{\{r+a+w,r+a-w\}:w\in[\frac{n-1}{2}]\big\}$
 is a near one-factor of $K_{n}$, and vertex $r+a$ is isolated.
Hence, there exists a unique edge in the set $\big\{\{(r+a+w,a),(r+a-w,a)\}:w\in[\frac{n-1}{2}]\big\}$ that contains the vertex $(x,a)$ if $x\not\equiv r+a\pmod n$.
For any vertex $(r+a,a)$,
there exists a unique edge in $\big\{\{(r+w,w),(r-w,n-w)\}:w\in[\frac{n-1}{2}]\big\}$ that contains it. It follows that ${E}_{r}$ is a one-factor of $K_{n\times g}$ without parallel edges.

It is straightforward to show that for all edges $\{(x,a),(y,a)\}$ in $K_{n\times g}$, each can be found in the set $\big\{\{(r+w+a,a),(r-w+a,a)\}:w\in\left[\frac{n-1}{2}\right],r\in[n]\big\}$. Finally, we should remove a one-factor to delete all repeated edges we used.
In fact,  $$\left\{\big\{(r+w,w),(r-w,n-w)\big\}:w\in\left[\frac{n-1}{2}\right],r\in[n]\right\}={C}_{1,\frac{n-1}{2}}^{1}.$$ Therefore, the collection $$\left\{{C}_{i,k}^{l}:i\in [n-2], k\in\left[\frac{n-1}{2}\right],l\in[2],(i,k,l)\neq\big(1,\frac{n-1}{2},1\big)\right\}\bigcup\big\{{E}_{r}:r\in [n]\big\}$$ is an $\mathrm{OF}(g+1,g)$.
\qed

For the case when $g$ is odd and $n=g+1$, we also provide a direct construction. Prior to the construction, we require two notions.
An {\em orthogonal array} $\mathrm{OA}(t,n,q)$ is a $q^{t}\times n$ array $A$ over an alphabet $Q$ with $q$ symbols such that each ordered $t$-tuple of $Q$ appears in exactly one row in each projection of $t$ columns from $A$.
The following results \cite[pp. 317--331]{code-impo1} are derived from $\mathrm{MDS}$ codes.
\begin{Theorem}[\label{OA_B4}{\cite{code-impo1}}]
If $q$ is a prime power and $t$ is a positive integer with $t\leq q-1$, then there exists an $\mathrm{OA}(t,n,q)$ for any integer $n$ with $t\leq n\leq q+1$. Moreover, if $q$ is an even prime power and $t\in \{3,q-1\}$, then there exists an $\mathrm{OA}(t,n,q)$ for any $n$ with $t\leq n\leq q+2$.
\end{Theorem}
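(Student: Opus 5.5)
Since this result is quoted from \cite{code-impo1}, I would simply reprove it through the standard equivalence between orthogonal arrays of index one and MDS codes. Let $Q=\mathbb{F}_q$. A linear $[n,t]_q$ code $\mathcal{M}$ with generator matrix $M$ (size $t\times n$) is MDS exactly when every $t$ columns of $M$ are linearly independent. If so, list all $q^t$ codewords $\mathbf{u}M$, $\mathbf{u}\in Q^t$, as the rows of a $q^t\times n$ array. Projecting onto any $t$ columns gives the map $\mathbf{u}\mapsto \mathbf{u}M'$, where $M'$ is an invertible $t\times t$ submatrix. This map is a bijection $Q^t\to Q^t$, so every $t$-tuple appears exactly once, and the array is an $\mathrm{OA}(t,n,q)$. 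It therefore suffices to build $t\times n$ matrices over $\mathbb{F}_q$ in which every $t$ columns are independent. Deleting columns preserves this property, so for each admissible $t$ I only need one matrix of the maximal length ($q+1$ or $q+2$); all shorter lengths $n\ge t$ then follow by puncturing.

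For the first assertion I would use the doubly extended Reed--Solomon construction. Take the columns $(1,x,x^2,\ldots,x^{t-1})^T$ for $x\in\mathbb{F}_q$, together with the column $(0,\ldots,0,1)^T$ (the point at infinity), giving $q+1$ columns. Any $t$ of the finite columns form a Vandermonde matrix with distinct nodes, so they are independent. If the infinity column is among the chosen $t$, expand the determinant along it: what remains is a $(t-1)\times(t-1)$ Vandermonde matrix in the remaining nodes, which is nonzero. The hypothesis $t\le q-1$ (indeed $t\le q+1$ already suffices) guarantees $t\le q+1=n_{\max}$, so the whole range $t\le n\le q+1$ is covered.

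For $q$ even and $t=3$, I would append to the $q+1$ columns above (with $t=3$: $(1,x,x^2)^T$ and $(0,0,1)^T$) the extra column $(0,1,0)^T$. Geometrically the $q+1$ points form a conic in $\mathrm{PG}(2,q)$, and the new point is its nucleus. I would verify the needed independence directly with determinants:
\begin{itemize}
\item the columns $(1,x,x^2),(1,y,y^2),(0,1,0)$ have determinant $\pm(y^2-x^2)=\pm(x+y)^2$, which is nonzero for $x\ne y$ precisely because the characteristic is $2$;
\item the columns $(1,x,x^2),(0,0,1),(0,1,0)$ have determinant $\pm1$.
\end{itemize}
This produces a $[q+2,3]$ MDS code and hence an $\mathrm{OA}(3,n,q)$ for $3\le n\le q+2$.

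For $t=q-1$, I would take the dual of this $[q+2,3]$ MDS code. The dual of an MDS code is MDS, by the standard fact that a parity-check matrix of an MDS code has every $n-k$ columns independent (equivalently, $d^\perp=k+1$). The dual is therefore a $[q+2,q-1]$ MDS code, and puncturing again gives every $n$ with $q-1\le n\le q+2$.

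The main obstacle is the $q+2$ case, specifically the nucleus determinant computation, since that is where evenness of $q$ is essential. The duality step is standard, and I would cite it rather than reprove it.
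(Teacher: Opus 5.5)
Your proof is correct and follows the MDS-code derivation that the paper takes from its reference; the paper gives no argument of its own beyond saying the result comes from MDS codes. That derivation uses the equivalence between index-one orthogonal arrays and linear MDS codes, doubly extended Reed--Solomon codes up to length $q+1$, the conic-plus-nucleus $[q+2,3]$ code for even $q$ (where your check $y^2-x^2=(x+y)^2\neq 0$ in characteristic $2$ is the key step), its $[q+2,q-1]$ dual, and puncturing to reach every required length, including $q=2$.
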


A {\em{Latin square}} of order $n$, denoted by $\mathrm{LS}(n)$, is an $n\times n$ array in which each cell contains a single symbol from an $n$-set~$X$, such that each symbol of $X$ occurs exactly once in each row and exactly once in each column. Suppose that $L=(a_{i,j})$ and $L^{'} = (b_{i,j})$ are two $\mathrm{LS}(n)$s on~$X$.~$L$ and $L^{'}$ are {\em{orthogonal}} if every element of $X\times X$ occurs exactly once among the $n^{2}$ pairs $(a_{i,j},b_{i,j})$, $i,j\in X$.

\begin{Theorem}[\label{DLS1}\cite{OLS}]
There exists a pair of orthogonal Latin squares of order $n$ for any positive integer $n \neq 2,6$.
\end{Theorem}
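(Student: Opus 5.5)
This statement is the classical theorem of Bose, Shrikhande and Parker, and the paper only cites it. If I had to prove it, I would split the argument by the $2$-adic valuation of $n$. The direct-product (MacNeish) construction handles every $n\not\equiv 2\pmod 4$ quickly. The case $n\equiv 2\pmod 4$, $n\ge 10$, carries essentially all of the difficulty.

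\emph{Odd $n$.} Over $\mathbb{Z}_n$, put $L(i,j)=i+j$ and $L'(i,j)=i+2j$. Both are Latin squares because $1$ and $2$ are units modulo $n$. They are orthogonal because the map $(i,j)\mapsto(i+j,i+2j)$ has determinant $1$, hence is a bijection of $\mathbb{Z}_n^2$. The case $n=1$ is trivial.

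\emph{Prime powers $q\ge 3$.} Over $\mathbb{F}_q$, take $L_\alpha(i,j)=\alpha i+j$ for $\alpha\ne 0$. Any two distinct nonzero $\alpha,\beta$ give orthogonal squares. Equivalently, I could read this off an $\mathrm{OA}(2,4,q)$ from Theorem~\ref{OA_B4}.

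\emph{Direct products.} If $(L_1,L_1')$ is an orthogonal pair of order $m_1$ and $(L_2,L_2')$ one of order $m_2$, then the componentwise Kronecker product $\big((L_1(i_1,j_1),L_2(i_2,j_2))\big)$, paired with its primed analogue, is an orthogonal pair of order $m_1m_2$. This is checked coordinate by coordinate. Now write $n=2^e\prod p_k^{e_k}$. When $e=0$ or $e\ge 2$, every prime-power factor is at least $3$, so the product construction yields an orthogonal pair of order $n$. This settles all $n\not\equiv 2\pmod 4$.

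\emph{The main obstacle: $n=2m$ with $m$ odd, $n\ge 10$.} This is the refutation of Euler's conjecture, and no product argument can work, because $2$ itself fails. My plan is to use the ``Latin squares with holes'' / pairwise balanced design method.

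I would first prove a recursive lemma. Suppose there is a transversal design $\mathrm{TD}(k,m)$, equivalently $k-2$ mutually orthogonal Latin squares of order $m$. Suppose also that, for some $0\le u\le m$, there are orthogonal pairs of orders $m$, $m+1$ and $u$ (or $u=0,1$). Truncate one group of the transversal design to size $u$ and fill each block with an orthogonal pair of the appropriate size. With $k=5$, this yields an orthogonal pair of order $4m+u$.

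Concretely, I would take $m$ a prime power with $m\ge 4$ (not $m=4$ when $4$ causes trouble), so that three mutually orthogonal Latin squares of order $m$ exist by the finite field construction above. Choosing $u$ with $4m+u\equiv 2\pmod 4$ and $0\le u\le m$ then covers all sufficiently large $n\equiv 2\pmod 4$, say $n\ge 62$ or so. The density of prime powers makes the windows $[4m,5m]$ overlap.

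The remaining finitely many orders $10,14,18,\dots$ below this threshold I would treat by hand. For $n=3v+1$ with $v\equiv 3\pmod 6$, I would use Parker's construction: take a Steiner triple system on $v$ points, use orthogonal idempotent squares on its blocks, and add a common transversal; this gives $n=10,22,\dots$. For the other small orders I would use explicit orthogonal pairs built from quasi-difference matrices over $\mathbb{Z}_{n-k}$ with $k$ adjoined infinite points (e.g.\ $\mathbb{Z}_{11}\cup\{\infty_1,\infty_2,\infty_3\}$ for $n=14$), one verified by direct check for each order. Closing this finite list is where I expect the real work to lie.
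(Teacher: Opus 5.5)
The paper quotes this theorem from Bose--Shrikhande--Parker without proof, so the only question is whether your outline holds up. Your treatment of $n\not\equiv 2\pmod 4$ (odd $n$, prime powers, MacNeish products) is correct. The recursion for $n\equiv 2\pmod 4$, however, is mis-set-up. Write $N(n)$ for the maximum number of MOLS of order $n$.

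First, the hypotheses of your lemma are garbled. In the truncated $\mathrm{TD}(5,m)$ the blocks have sizes $4$ and $5$, and they need \emph{idempotent} orthogonal pairs; these exist because $N(4),N(5)\ge 3$. Only the pairwise disjoint groups, of sizes $m$ and $u$, may carry arbitrary pairs. Your condition at order $m+1$ conflates this with Wilson's inequality $N(wt+u)\ge\min\{N(w),N(w+1),N(t)-1,N(u)\}$, where what appears is the weight $w=4$ and $w+1=5$. Read literally, your condition would demand an order $m+1\equiv 2\pmod 4$ whenever $m\equiv 1\pmod 4$.

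More seriously, $4m\equiv 0\pmod 4$, so $4m+u\equiv 2\pmod 4$ forces $u\equiv 2\pmod 4$. Hence the escape $u\in\{0,1\}$ never occurs, and $u=2,6$ are impossible. So $u\ge 10$ is itself an order of the hard case, and $m\ge u\ge 10$. The argument is therefore a strong induction, not a reduction to known orders. With $m$ a prime power, among $n\equiv 2\pmod 4$ it reaches only $n=54,62$, $74\le n\le 94$ and $n\ge 102$. The base set is $10,14,\dots,50,58,66,70,98$, not ``the orders below $62$''.

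Second, those base cases carry the entire content of the theorem (they are the counterexamples to Euler), and you construct none of them. The one concrete recipe you give fails. There is no pair of orthogonal idempotent Latin squares of order $3$, because the idempotent square of order $3$ with a fixed diagonal is unique. So the triples of a Steiner triple system cannot be filled that way. Moreover, $v\equiv 3\pmod 6$ gives $3v+1=10,28,46,\dots$, never $22$. As written, then, the proposal proves the statement only for $n\not\equiv 2\pmod 4$.

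A clean repair is to use Wilson's construction with weight $3$ instead of $4$:
\begin{itemize}
\item Truncate one group of a $\mathrm{TD}(5,t)$ to $u\le t$ points, inflate the four full groups by weight $3$ using $\mathrm{TD}(4,3)$ and $\mathrm{TD}(4,4)$, and add a $\mathrm{TD}(4,u)$. This gives $N(3t+u)\ge 2$ whenever $N(t)\ge 3$ and $N(u)\ge 2$.
\item Take $t$ odd with $3\nmid t$ or $9\mid t$, so that $N(t)\ge 3$ by MacNeish. Then $u=n-3t$ is odd, which removes the self-reference.
\item Such a $t$ lies in $[n/4,(n-1)/3]$ for every $n\equiv 2\pmod 4$ with $n\ge 18$.
\end{itemize}
This leaves only $n=10$ and $n=14$, which must be done explicitly, e.g.\ by Parker's quasi-difference matrix over $\mathbb{Z}_7$ with three infinite points for $n=10$.
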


\begin{Lemma}\label{OF_n_even}
%There exists a $\mathrm{TOC}_{q+1}(q+1,3,2)$ for any odd prime power $q$.
There exists an $\mathrm{OF}(q+1,q)$ for any odd prime power $q$.
\end{Lemma}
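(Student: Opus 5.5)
The plan is to give a direct algebraic construction over the field $\mathbb{F}_q$, rather than assembling the one-factors from Strategies A and B. The whole design rests on the following reformulation of the problem.

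\textbf{Reformulation.} Let $n=q+1$ and $g=q$. Then $gn$ is even and a one-factor has $\frac{q(q+1)}{2}={n\choose 2}$ edges. A one-factor with no parallel edges has at most one edge between any two parts. Since its edge count equals the number of pairs of parts, it has \emph{exactly} one edge between every two parts $P_x,P_y$. So such a one-factor is encoded by a function $f$ on ordered pairs $(x,y)$ of distinct parts with values in the symbol set: the edge between $P_x$ and $P_y$ is $\{(x,f(x,y)),(y,f(y,x))\}$. The one-factor condition says that for each fixed $x$, the map $y\mapsto f(x,y)$ is a bijection from the $q$ other parts onto the $q$ symbols. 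An $\mathrm{OF}(q+1,q)$ is then a family of $q^2$ such functions $f_{\rho}$ (Lemma~\ref{d3}) such that every edge is used exactly once. This holds iff, for each unordered pair $\{x,y\}$, the map $\rho\mapsto (f_\rho(x,y),f_\rho(y,x))$ is a bijection onto the $q^2$ ordered pairs of symbols. This is an orthogonal-array type condition in the spirit of Theorem~\ref{OA_B4}, but I will realize it explicitly.

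\textbf{Construction.} Identify the symbol set $[g]$ with $\mathbb{F}_q$ and the index set of parts $[n]$ with $\mathbb{F}_q\cup\{\infty\}$. Index the $q^2$ one-factors by $\rho=(\alpha,\beta)\in\mathbb{F}_q^2$ and define:
\begin{itemize}
\item $f_\rho(x,y)=\alpha+\beta x+(y-x)$ for finite $x\neq y$;
\item $f_\rho(x,\infty)=\alpha+\beta x$ for finite $x$;
\item $f_\rho(\infty,y)=\beta+y$ for finite $y$.
\end{itemize}

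\textbf{Verification.} The steps, in order, are as follows.
\begin{enumerate}
\item \emph{Each $f_\rho$ is a one-factor.} For finite $x$, the values $y-x$ over finite $y\neq x$ run through $\mathbb{F}_q^{*}$, and $y=\infty$ contributes the missing shift $0$. Hence $y\mapsto f_\rho(x,y)$ is a bijection onto $\mathbb{F}_q$. For $x=\infty$, the map $y\mapsto \beta+y$ is clearly a bijection.
\item \emph{Pairs of finite parts.} For finite $x\neq y$, the pair $(f_\rho(x,y),f_\rho(y,x))$ equals $(\alpha+\beta x,\alpha+\beta y)$ translated by the constant vector $(y-x,x-y)$. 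The linear map $(\alpha,\beta)\mapsto(\alpha+\beta x,\alpha+\beta y)$ has a Vandermonde determinant $y-x\neq 0$, so it is bijective on $\mathbb{F}_q^2$.
\item \emph{Pairs involving $\infty$.} For finite $x$, the pair is $(\alpha+\beta x,\beta+x)$. The second coordinate determines $\beta$, and then the first determines $\alpha$, so this map is also a bijection.
\item \emph{Conclusion.} Together these show that every edge of $K_{(q+1)\times q}$ lies in exactly one of the $q^2$ one-factors, and none of them contains parallel edges. This is an $\mathrm{OF}(q+1,q)$.
\end{enumerate}

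\textbf{Main obstacle.} The key difficulty is finding a single formula that meets both requirements at once. Within one factor, the values at a fixed part must be distinct; across factors, each pair of parts must see every pair of symbols exactly once. A naive choice, such as taking columns of an $\mathrm{OA}(2,q+1,q)$ from Theorem~\ref{OA_B4} as the values $f_\rho(x,\cdot)$, fails the first requirement. The additive shift $y-x$, with $\infty$ filling the zero shift, is what reconciles them. The remaining technical care is in translating back to the paper's labels: mapping $\mathbb{F}_q$ to $[g]$ and $\mathbb{F}_q\cup\{\infty\}$ to $[n]$ by fixed bijections. If one prefers to follow the paper's OA and Latin square toolkit more literally, the same structure can be phrased as follows: the part $\infty$ together with the $q$ finite parts gives a pair of orthogonal Latin squares of order $q$ (Theorem~\ref{DLS1}), namely $(\alpha,\beta)\mapsto\alpha+\beta x$ and $(\alpha,\beta)\mapsto\beta+x$.
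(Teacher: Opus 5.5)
Your proof is correct, and it is organized differently from the paper's. The paper's route has four ingredients:
\begin{itemize}
\item an abstract $\mathrm{OA}(2,q+1,q)$ from Theorem~\ref{OA_B4}, whose rows are split into blocks $M_1,\dots,M_q$ by the first column;
\item a one-factorization $F_1,\dots,F_q$ of $K_{q+1}$ with $\{r,q+1\}\in F_r$;
\item a Latin square $L$;
\item factors $C_{i,j}$ that give both ends of each pair of $F_r$ the symbols of row $j+r-1$ of $M_i$, and $l_{r,i}$ on part $q+1$.
\end{itemize}
It then checks property (A) of Strategy A, shows every edge is covered at least once, and finishes by counting.

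You instead state the exact characterization: one edge per pair of parts, a bijective symbol map at each part, and a bijection $\rho\mapsto(f_\rho(x,y),f_\rho(y,x))$ for each pair of parts. You then give a closed formula over $\mathbb{F}_q$ and verify it with three one-line bijections, the key one being the Vandermonde determinant $y-x\neq 0$.

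Underneath, your formula is in fact a concrete instance of the paper's scheme. With $\gamma=\alpha+x+y$ and $\delta=\beta-2$ one has $f_\rho(x,y)=\gamma+\delta x$ and $f_\rho(y,x)=\gamma+\delta y$. Likewise $f_\rho(x,\infty)=\gamma+\delta x$ with $\gamma=\alpha+2x$. Now take the one-factor $\{\{x,y\}:x+y=s,\ x\neq y\}\cup\{\{s/2,\infty\}\}$ of $K_{q+1}$. On it, your factor $(\alpha,\beta)$ uses the single row $(\gamma,\delta)=(\alpha+s,\beta-2)$ of the Reed--Solomon $\mathrm{OA}(2,q+1,q)$, whose row $(\gamma,\delta)\in\mathbb{F}_q^2$ is $\delta$ followed by $(\gamma+\delta z)_{z\in\mathbb{F}_q}$. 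It puts $\beta+s/2$ on $\infty$.

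That is exactly the paper's rotation of OA rows across one-factors, using the linear OA, this one-factorization, and the Latin square $(s,\delta)\mapsto\delta+2+s/2$. Your shift $y-x$ is that rotation written in other coordinates. So your remark that OA columns ``fail'' applies only to the unrotated use.

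What your version buys:
\begin{itemize}
\item it is self-contained, with no cited OA existence and no separate Latin square or one-factorization;
\item the verification is exact rather than ``at least once plus counting'';
\item it works for even prime powers as well, though those are already covered by Lemma~\ref{OF_n_odd}.
\end{itemize}
The paper's version buys modularity within Strategy A and works with any $\mathrm{OA}(2,q+1,q)$, linear or not.

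One correction: your final sentence is inaccurate. The map $(\alpha,\beta)\mapsto\beta+x$ is constant in $\alpha$, and $\alpha+\beta x$ is not Latin when $x=0$. These are two orthogonal columns of an orthogonal array, not a pair of orthogonal Latin squares, and Theorem~\ref{DLS1} plays no role. Simply drop that sentence; the proof does not need it.
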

\proof For an odd prime power $q$, let ${M}$ be an $\mathrm{OA}(2,q+1,q)$ over $[q]$, which exists by Theorem~\ref{OA_B4}.
Further assume that the symbols in the first coordinate of the first $q$ rows of $M$ are ones, the symbols in the first coordinate of the next $q$ rows of ${M}$ are twos, and so on, until the symbols in the first coordinate of the last $q$ rows of $M$ are $q$'s. Thus, $M$ has the form
$${M}=\left(\begin{matrix}
M_{1} \\ M_{2} \\ \vdots \\ M_{q}\\
\end{matrix}\right),\ \text {where}\
M_{i}=\left(\begin{matrix}
i & m_{1,1}^{(i)} & m_{1,2}^{(i)} & \cdots & m_{1,q}^{(i)}\\
i & m_{2,1}^{(i)} & m_{2,2}^{(i)} & \cdots & m_{2,q}^{(i)}\\
\vdots & \vdots & \vdots & & \vdots\\
i & m_{q,1}^{(i)} & m_{q,2}^{(i)} & \cdots & m_{q,q}^{(i)}\\
\end{matrix}\right). $$
Then all columns of $M_{i}$ except the first column form permutations of $1,2,\ldots,q$.

Let $L=(l_{i,j})$ be a Latin square over $[q]$.
Since $q+1$ is even, we take a one-factorization of $K_{q+1}$ with vertex set $[q+1]$ and denote the one-factors by ${F}_{1},{F}_{2},\ldots,{F}_{q}$. Assume that $\{r,q+1\}\subseteq {F}_{r}$, $r\in [q]$.
%Fix $i,j$ with $i,j\in[q]$.
%Apply Strategy A with $s=q$. For any $\{x,y\}\in F_{r}$ with $q+1\not\in\{x,y\}$, $r\in[q]$, we choose $c_{r,x}^{i,j}=m_{j+r-1,x}^{(i)}$ and $c_{r,y}^{i,j}=m_{j+r-1,x}^{(i)}$.
%For any $\{x,y\}\in F_{r}$, $r\in[n-1]$, we choose $c_{r,x}^{i}=c_{r,y}^{i}=i+r\in [g]$.
%Since $n\le g$, the set $\{i+r:r\in[n-1]\}$ contains $n-1$ distinct nonzero elements in $[g]$, where $i+r$ is reduced modulo $g$ in $[g]$, the property (A) is satisfied.
Apply Strategy A with $s=q$ by (\ref{s}).
Then for any $i,j \in [q]$, define a sub-graph of $K_{n\times g}$ by
$${C}_{i,j}=\bigcup_{r\in [q]}{A}_{i,j}^{r},i,j \in [q],$$
where
$${A}_{i,j}^{r}=\left\{\big\{(x,m_{j+r-1,x}^{(i)}),(y,m_{j+r-1,y}^{(i)})\big\}:q+1\notin \{x,y\}\in {F}_{r}\right\}\bigcup \left\{\big\{(r,m_{j+r-1,r}^{(i)}),(q+1,l_{r,i})\big\}\right\},
$$
and $j+r-1\in [q]\pmod{q}$.
Since $\{m_{j+r-1,z}^{(i)}:r\in [q]\}=\{l_{r,i}:r\in [q]\}=[q]$, property (A) is satisfied.
Hence, ${C}_{i,j}$ is a one-factors of $K_{(q+1)\times q}$ without parallel edges.

Note that we have obtained $q^{2}$ one-factors of $K_{q+1}$, by Lemma \ref{d3},
we only need prove that every edge $\{(x,a),(y,b)\}$ of $K_{(q+1)\times q}$ occurs at least once in the collection $\{{C}_{i,j}:i,j\in [q]\}$. Thus, these $q^{2}$ one-factors are edge-disjoint and form an  $\mathrm{OF}(q+1,q)$.
\vspace{-0.2cm}
\begin{itemize}\vspace{-0.2cm}
\item[(1)] If $\{x,y\}\subseteq[q]$ and $\{x,y\}\in {F}_{r}$, %For any $1\leq a,b \leq q$,
there exist $i,j\in[q]$ such that $m_{j+r-1,x}^{(i)} =a$ and $m_{j+r-1,y}^{(i)}=~b$ since $M$ is an $\mathrm{OA}(2,q+1,q)$ over $[q]$.
    So $\{(x,a),(y,b)\}$ occurs at least once in ${A}_{i,j}^{r}\subseteq{C}_{i,j}$.\vspace{-0.2cm}
\item[(2)] If $\{(x,a),(y,b)\} = \{(r,a),(q+1,b)\}$, there exists an $i\in[q]$ such that $l_{r,i} = b$ as $L$ is a Latin square over $[q]$. And there exists a $j\in[q]$ such that $m_{j+r-1,r}^{(i)}= a$ as the $r$-th column of $M_{i}$ is a permutation of $1,2,\ldots,q$. So $\{(r,a),(q+1,b)\}$ occurs at least once in ${A}_{i,j}^{r}\subseteq{C}_{i,j}$.\vspace{-0.2cm}
\end{itemize}\vspace{-0.2cm}
%So the collection $\{{C}_{i,j}:i,j\in [q]\}$ is an $\mathrm{OF}(q+1,q)$.
\qed

\subsection{Doubling construction}

In order to determine the existence of $\mathrm{OF}(n,g)$s recursively, we present a doubling construction.
\begin{Lemma}\label{OF_double}
Let $n,g$ be positive integers satisfying $ng$ is even and $n>g$.
If there exists an $\mathrm{OF}(n,g)$, then there exists an $\mathrm{OF}(2n,g)$.
\end{Lemma}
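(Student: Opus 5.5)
The plan is to split $K_{2n\times g}$ into two copies of $K_{n\times g}$ together with the bipartite "cross" part between them. Concretely, write the parts of $K_{2n\times g}$ as $P_x$ and $P_{x+n}$ for $x\in[n]$. Let $H_1$ be the subgraph induced on $\bigcup_{x\in[n]}P_x$ and $H_2$ the one induced on $\bigcup_{x\in[n]}P_{x+n}$; both are copies of $K_{n\times g}$. The remaining edges form $K_{n\times g,\,n\times g}$, that is, all edges $\{(x,a),(y+n,b)\}$ with $x,y\in[n]$ and $a,b\in[g]$. Since $ng$ is even, $2ng$ is even, and an $\mathrm{OF}(2n,g)$ must consist of $g(2n-1)=g(n-1)+gn$ one-factors. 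So the target is the $g(n-1)$ one-factors coming from the given $\mathrm{OF}(n,g)$ together with $gn$ one-factors covering the cross edges.

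\textbf{Step 1 (the two halves).} Take the given $\mathrm{OF}(n,g)$ $\{O_1,\ldots,O_{g(n-1)}\}$ on $H_1$ and its translate $O_k'=\{\{(x+n,a),(y+n,b)\}:\{(x,a),(y,b)\}\in O_k\}$ on $H_2$. Then $O_k\cup O_k'$ is a one-factor of $K_{2n\times g}$. It has no parallel edges, because any two of its edges either lie within one half (no parallel edges there by hypothesis) or lie in different halves and so meet four distinct parts. This gives $g(n-1)$ one-factors and uses up every edge inside the halves.

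\textbf{Step 2 (the cross edges).} Index the cross one-factors by $(k,c)\in\mathbb{Z}_n\times\mathbb{Z}_g$ and set
$$M_{k,c}=\big\{\{(x,a),(x+k+n,a+c)\}:x\in[n],a\in[g]\big\},$$
with $x+k$ reduced modulo $n$ into $[n]$ and $a+c$ reduced modulo $g$. Each $M_{k,c}$ is a perfect matching, since $(x,a)\mapsto(x+k,a+c)$ is a bijection between the two halves. The $gn$ matchings $M_{k,c}$ partition the cross edges: an edge $\{(x,a),(y+n,b)\}$ lies exactly in $M_{y-x,\,b-a}$. For parallel edges, two edges of $M_{k,c}$ that share the part pair $\{P_x,P_{x+k+n}\}$ come from the same $x$, so they are $\{(x,a),(x+k+n,a+c)\}$ and $\{(x,a'),(x+k+n,a'+c)\}$ with $a\ne a'$.

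\textbf{The main obstacle} is exactly this: these two edges are parallel, so $M_{k,c}$ fails whenever $g\ge2$. Each matching must use every part pair $\{P_x,P_{y+n}\}$ at most once, yet it needs $ng$ edges while only $n^2$ part pairs are available. Since $n>g$, we have $n^2\ge ng$, so this is not ruled out by counting. What is needed is an $n\times n$ array whose cells $(x,y)$ carry labels $(a,b)$ arranged so that each row contains each $a$ exactly once and each column contains each $b$ exactly once, with the whole family of $gn$ such partial assignments covering all of $[n]^2\times[g]^2$ exactly once.

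I would try the following first. Fix a Latin square $L$ of order $n$ and use only $g$ of its symbol classes per matching. Define
$$M_{k,c}=\big\{\{(x,a),(x+\phi(a,k)+n,\,a+c)\}\big\},\qquad \phi(a,k)=k+a \pmod n .$$
Then, for fixed $x$, distinct $a$ go to distinct parts $y$, since $a\mapsto k+a$ is injective on $[g]$ when $g<n$. For fixed $y$, the partner $(x,a)$ with $x=y-k-a$ determines each $b=a+c$ once, so each matching is a one-factor. The edge $\{(x,a),(y+n,b)\}$ is recovered uniquely from $k=y-x-a$ and $c=b-a$. Checking the absence of parallel edges then reduces to showing that for fixed $x$ the map $a\mapsto x+k+a$ is injective, which holds because $g<n$. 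So this modified cyclic shift resolves the obstacle, and together with Step 1 it yields the $\mathrm{OF}(2n,g)$.
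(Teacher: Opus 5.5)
Your proof is correct. Step 1 is the same as the paper's. The difference is in how you cover the cross edges, where your labeling is simpler than the paper's.

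\textbf{What the paper does with the cross edges.} It uses the same cyclic one-factors $F_r=\{\{x,x+r+n\}:x\in[n]\}$ of the bipartite part. Like you, it takes $g$ consecutive ones $F_{k+1},\ldots,F_{k+g}$, which are distinct because $n>g$. It labels them with a pair of orthogonal Latin squares of order $g$:
$$C_{i,k}=\bigcup_{j\in[g]}F_{k+j}\big(m^{(1)}_{i,j},m^{(2)}_{i,j}\big).$$
Such pairs do not exist for $g=2,6$. So the paper needs a separate Case~2 for those values, built from the matchings $\bigcup_a F_{i+a}(a,a)$ and a one-factorization of $K_g$.

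\textbf{What you do instead.} Your matching $M_{k,c}$ is exactly $\bigcup_{a\in[g]}F_{k+a}(a,a+c)$. This is the paper's Case~1 formula with the arrays $m^{(1)}_{c,a}=a$ and $m^{(2)}_{c,a}=a+c$. These two arrays are not both Latin squares. However, each row of each array is a permutation of $[g]$, and together they cover every ordered pair $(a,b)$ exactly once. That is all the paper's argument actually uses:
\begin{itemize}
\item property (A) for each one-factor;
\item the coverage step for the edge $\{(x,a),(y,b)\}$.
\end{itemize}
Orthogonality of genuine Latin squares is more than is needed.

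\textbf{What each route buys.} Your version drops the appeal to the existence theorem for orthogonal Latin squares and removes the $g\in\{2,6\}$ case split. It gives a single self-contained proof for all $g<n$. The paper itself uses your labeling, $\bigcup_a I_{j+a}(a,a+i)$, in Step~2 of Proposition~\ref{OF_g_odd_1}. The only thing the paper's formulation adds is the observation that any orthogonal pair of row-Latin arrays would do.

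\textbf{Presentation.} In a final write-up, drop the failed first definition of $M_{k,c}$ and the loose remark about using symbol classes of an arbitrary Latin square of order $n$. State the cyclic construction directly with its three checks: it is a perfect matching, it has no parallel edges because $a\mapsto x+k+a$ is injective on $[g]$ when $g<n$, and the edge $\{(x,a),(y+n,b)\}$ is recovered uniquely via $k\equiv y-x-a$ and $c\equiv b-a$.
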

\proof Suppose that $\{A_{i}:i\in[g(n-1)]\}$ is the given $\mathrm{OF}(n,g)$, where each  $A_{i}$ is a one-factor of~$K_{n\times g}$ without parallel edges.
Firstly, consider all edges in $\{\{(x,a),(y,b)\}\in K_{2n\times g}:\{x,y\}\subseteq~[n] \text{ or } \{x,y\}\subseteq [n+1,2n]\}$. For any $i\in [g(n-1)]$, define
$$B_{i} = \big\{\{(x,a),(y,b)\},\{(x+n,a),(y+n,b)\}:\{(x,a),(y,b)\}\in A_{i}\big\}.$$
Since $A_{i}$ is a one-factor of $K_{n\times g}$ without parallel edges, which implies that $B_{i}$ is a one-factor of~$K_{2n\times g}$ without parallel edges.
For each edge $\{(x,a),(y,b)\}$ with $\{x,y\}\subseteq [n] $ or  $\{x,y\}\subseteq [n+1,2n]\}$ in $K_{2n\times g}$,
there exists an $i\in [g(n-1)]$, such that $\{(x,a),(y,b)\}\in {B}_{i}$ as $\{{A}_{i}:i\in [g(n-1)]\}$ is a one-factorization of $K_{n\times g}$.
Simple counting yields that we obtain $g(n-1)$ edge-disjoint one-factors of $K_{n\times 2g}$. %and all such edges of the form $\{(x,a),(y,b)\in K_{2n\times g}:\{x,y\}\subseteq [n] \text{ or } \{x,y\}\subseteq [n+1,2n]\}$, each can be found exactly once in the set $\{B_{i}:i\in g(n-1)\}$.
To complete an $\mathrm{OF}(2n,g)$, we still need $gn$ one-factors of~$K_{2n\times g}$ by Lemma~\ref{d3}.

Secondly, consider all edges of the form $\{(x,a),(y,b)\}$ with $\{x,y\}\not \subseteq [n]$ and $\{x-n,y-n\}\not\subseteq~[n]$ in $K_{2n\times g}$. For $r\in [n]$, define
$$F_{r}=\big\{\{x,x+r+n\}:x\in [n]\big\},$$
where $x+r+n\in[n+1,2n]\pmod{n}$.
%$x+r+n$ is reduced modulo $n$ to lie in $[n+1,2n]$.
Then $F_{r}$, $r\in [n]$, are  edge-disjoint one-factors of $K_{2n}$. %We consider two cases.
We distinguish two cases
depending on the value of $g$.

\textbf{Case 1:}
$g\neq 2,6$.

Let $M_{1}=(m_{i,j}^{(1)})$ and $M_{2}=(m_{i,j}^{(2)})$ be a pair of orthogonal Latin squares over $[g]$, which exists by Theorem \ref{DLS1}.
For any $i\in [g]$, $k\in[n]$, define
$$C_{i,k}=\bigcup_{j\in[g]}F_{k+j}\left(m_{i,j}^{(1)},m_{i,j}^{(2)}\right),$$
where $k+j\in[n]\pmod{n}$. %is reduced modulo $n$ to an element in $[n]$.
%Fix $i,k$ with $1\leq i  \leq g$ and $0\leq k  \leq n-1$.
Since $n>g$, $F_{k+1}, F_{k+2},\ldots, $ $F_{k+g}$ are pairwise disjoint one-factors of $K_{2n}$.
Furthermore, since the sets $\{m_{i,j}^{(1)}:j\in [g]\}=\{m_{i,j}^{(2)}:j\in[g]\}=[g]$,  property (A) is satisfied. Thus each ${C}_{i,k}$ is a one-factor of $K_{2n\times g}$ without parallel edges by Strategy A.

For any edge $\{(x,a),(y,b)\}$ with $\{x,y\}\not \subseteq [n]$ and $\{x-n,y-n\}\not\subseteq [n]$ in $K_{2n\times g}$, we let $r\in [n]$ such that $y-x\equiv r\pmod{n}$.
Then $\{x,y\}\in {F}_{r}$. For $a,b \in [g]$, there exist~$i$ and $j$ such that $m_{i,j}^{(1)}=a$ and $m_{i,j}^{(2)}=b$ as $M_{1}$ and $M_{2}$ are a pair of orthogonal Latin squares over $[g]$. Thus, the edge $\{(x,a),(y,b)\}\in {F}_{r}(m_{i,j}^{(1)},m_{i,j}^{(2)}) \subseteq {C}_{i,k}$, where $k\in [n]$ and $k+j\equiv r\pmod{n}$.
It follows that these ${C}_{i,k}$, $i\in [g]$, $k\in [n]$, are $gn$ edge-disjoint  one-factors of $K_{2n\times g}$ without parallel edges.
Therefore,  $$\{{B}_{i}:i\in[g(n-1)]\}\bigcup\{{C}_{i,k}:i\in [g],k\in [n]\}$$ forms an $\mathrm{OF}(2n,g)$.

%Next, we prove that $\{B_{i}:i\in [g(n-1)]\}\bigcup\{{C}_{i,k}:i\in [g],k\in[n]\}$ is a one-factorization of $K_{2n\times g}$. Let $\{(x,a),(y,b)\}$ be an edge in $K_{2n\times g}$ and let $x<y$.\vspace{-0.2cm}
%\begin{itemize}
%\item[(1)] If $\{x,y\}\subseteq [n]$ or $\{x-n,y-n\}\subseteq [n]$, we can find $i\in [g(n-1)]$, such that $\{(x,a),(y,b)\}\in {B}_{i}$ as $\{{A}_{i}:i\in [g(n-1)]\}$ is a one-factorization of $K_{n\times g}$. \vspace{-0.2cm}
%\item[(2)] If $\{x,y\}\not \subseteq [n]$ and $\{x-n,y-n\}\not\subseteq [n]$, letting $r\in [n]$ and $y-x\equiv r\pmod{n}$. Then we have $\{x,y\}\in {F}_{r}$. For $a,b \in [g]$, we can find $i$ and $j$ such that $m_{i,j}^{(1)}=a$ and $m_{i,j}^{(2)}=b$ as $M_{1}$ and $M_{2}$ are a pair of orthogonal Latin squares over $[g]$. Thus, the edge $\{(x,a),(y,b)\}\in {F}_{r}(m_{i,j}^{(1)},m_{i,j}^{(2)}) \subseteq {C}_{i,k}$, where $k\in [n]$ and $k+j\equiv r\pmod{n}$.
%\end{itemize}\vspace{-0.2cm}

\textbf{Case 2:} $g=2,6$.

We apply Strategy A and let $s=g$ by (\ref{s}). We have that each
$$E_{i}=\bigcup_{a\in [g]}{F}_{i+a}(a,a),\ i\in[n],$$
is a one-factor of $K_{2n\times g}$ without parallel edges,
where $i+a\in[n]\pmod{n}$.
%$i+a$ is reduced modulo $n$ to lie in $[n]$.
Again note that, for any $i\in[n]$, the one-factors  $F_{i+1},F_{i+2},\ldots,F_{i+g}$ are pairwise disjoint as $n>g$.

Since $g=2,6$, we take a one-factorization of $K_{g}$, say
$\mathcal{G}=\{G_{1},G_{2},\ldots,G_{g-1}\}$. Assign labels to each element in ${G}_i$ by
$${G}_{i}=\left\{\{a_{i,w},b_{i,w}\}:w\in\left[\frac{g}{2}\right], a_{i,w}<b_{i,w}\right\}.$$
%Assume w.l.o.g. that $a_{i,w}<b_{i,w}$ for $i\in[g-1]$ and $w\in[\frac{g}{2}]$.
%where ${G}_{i}=\big\{\{a_{i,1},b_{i,1}\},\{a_{i,2},b_{i,2}\},\ldots,\{a_{i,\frac{g}{2}},b_{i,\frac{g}{2}}\}\big\}$ and $a_{i,w}<b_{i,w}$($i\in[g-1],w\in[{g\over 2}]$).
%Fix $i$ with $1 \leq i\leq g-1$ and $j$ with $0\leq j \leq n-1$.
%For any $j\in [n]$,
%the edge sets $F_{j+1}, F_{i+2},\ldots,\ldots,F_{j+g}$ are pairwise disjoint one-factors of $K_{2n}$ since $n>g$, where $j+w$ with $w\in[g]$ is reduced modulo $n$ to lie in $[n]$.
Fix $i,j$ with $i\in [g-1]$ and $j\in [n]$ and apply Strategy A with $s=g$.
Since $n>g$, $F_{j+1}, F_{j+2},\ldots, $ $F_{j+g}$ are pairwise disjoint one-factors of $K_{2n}$.
Define a graph with edge set
$${I}_{i,j} = \left(\bigcup_{w\in [\frac{g}{2}]}{F}_{j+w}\big(a_{i,w},b_{i,w}\big)\right)\bigcup \left(\bigcup_{w\in [\frac{g}{2}]}{F}_{j+w+\frac{g}{2}}\big(b_{i,w},a_{i,w}\big)
\right),$$
where $j+w,j+w+\frac{g}{2}\in[n]\pmod{n}$. %by taking modulo $n$.
Note that one of the two vertices of each edge in~$F_{r}$, $r\in [n]$, is contained in $[n]$ and the other in $[n+1,2n]$.
For any $i\in [g-1]$, the set $\{a_{i,w},b_{i,w}:w \in [\frac{g}{2}]\}=[g]$, so property (A) is satisfied.
Hence, each ${I}_{i,j}$
is a one-factor of $K_{2n\times g}$ without parallel edges.

For any edge $\{(x,a),(y,b)\}$ with $\{x,y\}\not \subseteq [n]$ and $\{x-n,y-n\}\not\subseteq [n]$ in $K_{2n\times g}$,
we let $r\in [n]$ such that $y-x\equiv r\pmod{n}$. Then $\{x,y\}\in {F}_{r}$.
If $a=b$, the edge $\{(x,a),(y,b)\}\in {F}_{r}(a,a)\subseteq {E}_{i}$, where $i\in [n]$ and $i+a\equiv r\pmod{n}$.
If $a\neq b$, there exist $i\in [g-1]$ and $w\in [\frac{g}{2}]$ such that $\{a_{i,w},b_{i,w}\}=\{a,b\}$ since $\mathcal{G}$ is a one-factorization of $K_{g}$.
Then  $\{(x,a),(y,b)\} \in {F}_{r}(a_{i,w},b_{i,w})\subseteq{I}_{i,j}$, where $j\in [n]$ and $j+w\equiv r\pmod{n}$ if $a<b$, or $\{(x,a),(y,b)\} \in {F}_{r}(b_{i,w},a_{i,w})\subseteq{I}_{i,j}$ if $a>b$, where $j\in [n]$ and $j+w+\frac{g}{2}\equiv r\pmod{n}$. It follows that the collection $\{E_{i}:i\in [n]\}\bigcup\{{I}_{i,j}:i\in [g-1],j\in [n]\}$ contains $gn$ edge-disjoint one-factors of $K_{2n\times g}$ without parallel edges. Thus, $$\big\{{B}_{i}:i\in  [g(n-1)]\big\}\bigcup\big\{E_{i}:i\in [n]\big\}\bigcup\big\{{I}_{i,j}:i\in [g-1],j\in [n]\big\}$$ is a one-factorization of $K_{2n\times g}$.
So, an $\mathrm{OF}(2n,g)$ is produced and the proof is complete.
\qed

%Next, we prove that  $$\big\{{B}_{i}:i\in  [g(n-1)]\big\}\bigcup\big\{E_{i}:i\in [n]\big\}\bigcup\big\{{I}_{i,j}:i\in [g-1],j\in [n]\big\}$$ is a one-factorization of $K_{2n\times g}$. Let $\{(x,a),(y,b)\}$ be an edge in $K_{2n\times g}$ with $x<y$.
%We only need to cheek that the case with $\{x,y\}\not \subseteq [n]$ and $\{x-n,y-n\}\not\subseteq [n]$.
%Let $r\in [n]$ and $y-x\equiv r\pmod{n}$. Then we have $\{x,y\}\in {F}_{r}$. If $a=b$, the edge $\{(x,a),(y,b)\}\in {F}_{r}(a,a)\subseteq {E}_{i}$, where $i\in [n]$ and $i+a\equiv r\pmod{n}$. If $a\neq b$, we can find $i\in [g-1]$ and $w\in [\frac{g}{2}]$ such that $\{a_{i,w},b_{i,w}\}=\{a,b\}$ since $\{{G}_{1},{G}_{2},\ldots,{G}_{g-1}\}$ is a one-factorization of $K_{g}$. Then  $\{(x,a),(y,b)\} \in {F}_{r}(a_{i,w},b_{i,w})\subseteq{I}_{i,j}$, where $j\in [n]$ and $j+w\equiv r\pmod{n}$ if $a<b$, or $\{(x,a),(y,b)\} \in {F}_{r}(b_{i,w},a_{i,w})\subseteq{I}_{i,j}$ if $a>b$, where $j\in [n]$ and $j+w+\frac{g}{2}\equiv r\pmod{n}$.

\subsection{Product construction}
%In this subsection, we present a product construction.

\begin{Lemma}\label{OF-mul-construction}
Let $u,v,g$ be positive integers, where $gu$ and $gv$ are even and $u,v> g$. If there exist an $\mathrm{OF}(u,g)$ and an $\mathrm{OF}(v,g)$, then there exists an $\mathrm{OF}(uv,g)$.
\end{Lemma}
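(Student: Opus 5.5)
We would build the $\mathrm{OF}(uv,g)$ directly by a blow-up (product) argument. Identify the $uv$ parts of $K_{uv\times g}$ with the pairs $(x,y)\in[u]\times\mathbb{Z}_v$, so that a vertex is written $((x,y),a)$ with $a\in[g]$. Every edge of $K_{uv\times g}$ is of one of two kinds. An \emph{inner} edge joins parts $(x,y)$ and $(x,y')$ with the same first coordinate. A \emph{cross} edge joins parts $(x,y)$ and $(x',y')$ with $x\neq x'$. We will handle the two kinds separately. By Lemma~\ref{d3}, the target count of one-factors is $g(uv-1)=g(v-1)+g(u-1)v$, and this splits in exactly that way.

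\emph{Inner edges.} Let $\{A_i:i\in[g(v-1)]\}$ be the given $\mathrm{OF}(v,g)$. For each $i$ put
$$B_i=\big\{\{((x,y),a),((x,y'),b)\}: x\in[u],\ \{(y,a),(y',b)\}\in A_i\big\},$$
which is $u$ disjoint copies of $A_i$. Each $B_i$ is a one-factor of $K_{uv\times g}$. It has no parallel edges, since two of its edges between the same pair of parts come from one copy of $A_i$. The $B_i$ partition the inner edges because the $A_i$ partition $K_{v\times g}$.

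\emph{Cross edges.} Let $\{A'_l:l\in[g(u-1)]\}$ be the given $\mathrm{OF}(u,g)$. In a one-factor without parallel edges there is at most one edge between any two parts: two such edges cannot share a vertex (degree one), so they would be parallel. For each $l$ and each $k\in\mathbb{Z}_v$ define
$$C_{l,k}=\big\{\{((x,y),a),((x',y+k),b)\}: \{(x,a),(x',b)\}\in A'_l,\ x<x',\ y\in\mathbb{Z}_v\big\}.$$
The orientation $x<x'$ fixes which endpoint receives the shift.

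We then check three things. First, each $C_{l,k}$ is a one-factor. A vertex $((x,y),a)$ lies on a unique edge $\{(x,a),(x',b)\}$ of $A'_l$, and the map $y\mapsto y\pm k$ is a bijection of $\mathbb{Z}_v$, so the vertex lies on exactly one edge of $C_{l,k}$. Second, $C_{l,k}$ has no parallel edges. Parts $(x,y)$ and $(x',y')$ with $x<x'$ determine the pair $\{x,x'\}$, hence at most one edge of $A'_l$, and that edge together with $y$ determines at most one edge of $C_{l,k}$. Third, the $C_{l,k}$ cover the cross edges exactly once. Take a cross edge $\{((x,y),a),((x',y'),b)\}$ with $x<x'$. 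Then $l$ is the unique index with $\{(x,a),(x',b)\}\in A'_l$, and $k=y'-y$.

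Altogether $\{B_i\}\cup\{C_{l,k}\}$ consists of $g(uv-1)$ edge-disjoint one-factors without parallel edges. Since $uv>g$, Lemma~\ref{d3} shows this is an $\mathrm{OF}(uv,g)$. The only delicate point is the no-parallel-edge check for $C_{l,k}$. It rests on the observation that each one-factor of the $\mathrm{OF}(u,g)$ has at most one edge between any two parts, together with the fixed orientation $x<x'$ of the shift. I expect that step to need the most care; the rest is routine counting.
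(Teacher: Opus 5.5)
Your proof is correct and follows the paper's product construction. The inner edges are covered by $u$ disjoint copies of each one-factor of the $\mathrm{OF}(v,g)$, and the cross edges by lifting each one-factor of the $\mathrm{OF}(u,g)$ along perfect matchings between the fibres, where the paper uses the rows of an arbitrary Latin square $L$ of order $v$ (edges $\{(x,j,a),(y,l_{s,j},b)\}$) and your shift $y\mapsto y+k$ is the cyclic special case.
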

\proof The desired $\mathrm{OF}(uv,g)$ will be constructed for the multipartite graph $K_{uv\times g}$ with vertex set $[u]\times[v]\times [g]$ and  $uv$  parts $\{x\}\times \{\alpha\}\times [g]$ with $x\in [u]$ and $\alpha\in[v]$.
According to the assumption, we have an $\mathrm{OF}(v,g)$ on vertex set $[v]\times [g]$, say $\{{A}_{i}:i\in[g(v-1)]\}$, where each ${A}_{i}$ is a one-factor of $K_{v\times g}$ without parallel edges.
%On $\mathcal{A}_{i}$, add a new component to every vertex in the original graph.
We construct a new edge set ${A}_{x,i}$ with $x\in [u]$ by adjoining a new component $x$ to each vertex of ${A}_{i}$, that is
$${A}_{x,i}=\big\{\{(x,\alpha,a),(x,\beta,b)\}:
\{(\alpha,a),(\beta,b)\}\in {A}_{i}\big\}.$$
It is immediate that, for every $i\in[g(v-1)]$,
${C}_{i}:=\bigcup_{x \in [u]}{A}_{x,i}$
is a one-factor of $K_{uv\times g}$ without parallel edges.
Now, we run out of all edges of the form $\{(x,\alpha,a),(x,\beta,b)\}$, where $x\in[u]$, $\alpha \neq \beta \in[v]$ and $a,b\in[g]$, as ${A}_{i}$, $i\in g(v-1)$ form a one-factorization of $K_{v\times g}$ over $[v]\times[g]$.
%on vertex set $[u]\times [v]\times[g]$. So we obtain $g(v-1)$ disjoint one-factors of $K_{uv\times g}$. Obviously, $\mathcal{E}_{i}$ has $\Sigma_{x\in[u]}|\mathcal{A}_{x,i}|=\frac{uv(q-1)}{2}$ codewords. So, we have $g(v-1)$ optimal $(uv,3,2)_{q}$-codes.

Suppose that $\{{B}_{i}:i\in[g(u-1)]\}$ is the given $\mathrm{OF}(u,g)$ on vertex set $[u]\times [g]$, where each~${B}_{i}$ is a one-factor of $K_{u\times g}$ without parallel edges.
Let $L=(l_{i,j})_{v\times v}$ be a Latin square over $[v]$.
Hence, for any $x,y\in [u]$ with $x\neq y$ and any $s\in [v]$, $\big\{\{(x,j),(y,l_{s,j})\}:j\in [v]\big\}$ is a one-factor of the bipartite graph with parts $\{x\}\times [v]$ and $\{y\}\times [v]$.
Further, for any $i\in [g(u-1)]$ and any $e=\{(x,a),(y,b)\}\in{B}_{i}$ with $x< y$, define
$${E}_{e,i}^{s}=\big\{\{(x,j,a),(y,l_{s,j},b)\}:j \in [v]\big\}.$$
Then, for any $i\in [g(u-1)]$ and $s\in [v]$, define
$${E}_{i}^{s}=\bigcup_{e\in{B}_{i}}{E}_{e,i}^{s}.$$
The number of edges in ${E}_{i}^{s}$ is $\frac{uvg}{2}$ clearly.
Then we prove that each  ${E}_{i}^{s}$ is a one-factor of $K_{uv\times g}$  without parallel edges.
We use proof by contradiction.
Assume that there exists a pair of parallel edges $e_{1}=\{(x,\alpha,a_{1}),(y,\beta,b_{1})\}$ and $e_{2}=\{(x,\alpha,a_{2}),(y,\beta,b_{2})\}$ in  ${E}_{i}^{s}$. Then $\{(x,a_{1}),(y,b_{1})\},\{(x,a_{2}),(y,b_{2})\}$ are a pair of parallel edges in  ${B}_{i}$, a contradiction.
For any vertex $(x,\alpha,a)\in [u]\times [v]\times [g]$, there exists a unique edge $e\in{B}_{i}$ containing $(x,a)$, since ${B}_{i}$ is a one-factor of $K_{u\times g}$;
and there is an edge $e'\in {E}_{e,i}^{s}$ such that $(x,\alpha,a)\in e'$ as $L$ is a Latin square over $[v]$.
Hence each  ${E}_{i}^{s}$ forms a one-factor of $K_{uv\times g}$ without parallel edges.

Now, we prove that the collection $$\big\{{C}_{i}:i\in [g(v-1)]\big\}\bigcup \big\{{E}_{i}^{s}:i\in [g(u-1)],s\in [v]\big\}$$
is a one-factorization of $K_{uv\times g}$.
Note that we have obtained $uv(g-1)$ one-factors of $K_{uv\times g}$. By Lemma \ref{d3}, we only need to prove that every edge $\{(x,\alpha,a),(y,\beta,b)\}$ in
$K_{uv\times g}$ occurs at least once.\vspace{-0.2cm}
%Let $\{(x,\alpha,a),(y,\beta,b)\}$ be an edge in $K_{uv\times g}$.\vspace{-0.2cm}
\begin{itemize}
 \item [$(1)$] If $x=y$, then $\alpha \neq \beta $ and there exists an $i\in [g(v-1)]$ such that $\{(x,\alpha,a),(x,\beta,b)\}\in {A}_{x,i}\subseteq{C}_{i}$ since $\{{A}_{i}:i\in [g(v-1)]\}$ forms an $\mathrm{OF}(v,g)$. \vspace{-0.2cm}
 \item [$(2)$] If $x< y$, there exists an $i\in [g(u-1)]$ such that there exists an edge $e = \{(x,a),(y,b)\}\in {B}_{i}$ since  $\{B_{i}:i\in [g(u-1)]\}$ is a one-factorization of $K_{u\times g}$ on vertex set $[u]\times [g]$. Next there exists an $s\in[v]$ such that $l_{s,\alpha} = \beta$ as $L$ is a Latin square over $[v]$. Then the edge $\{(x,\alpha,a),(x,\beta,b)\}\in {E}_{e,i}^{s}\subseteq{E}_{i}^{s}$.
\end{itemize}\vspace{-0.2cm}
Hence, the desired $\mathrm{OF}(uv,g)$ has been produced.
\qed

\subsection{Construction for OF$(n,g)$ with odd $n$}

%In this subsection we will construct $\mathrm{OF}(n,g)$s for relatively large $n$ in four propositions.
%In this subsection we will construct $\mathrm{OF}(n,g)$s for relatively large $n$.

\begin{Proposition}\label{OF_g_even_1}
There exists an $\mathrm{OF}(n,g)$ for any even $g$ and odd $n$ with $n\geq \mathop{\max}\{2g-1,g+3\}$.
\end{Proposition}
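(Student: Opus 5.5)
We decompose $K_{n\times g}$ into three families of one-factors: $n$ one-factors of Strategy~A type that absorb all edges $\{(x,a),(y,a)\}$, and the remaining $g(n-1)-n$ one-factors of Strategy~B type with $t=\frac g2$, as in (\ref{t}). Throughout, $n$ is odd, so every $D_i$ ($i\in[n-1]$) has size $n$ and the forbidden pairs are $\{d,n-d\}$. An edge $\{(x,a),(y,b)\}$ with $a<b$ lies in exactly one class $D_i(a,b)$. We record the class by $\pm d$ with $d\in[\frac{n-1}{2}]$, where $+d$ means $i=d$ and $-d$ means $i=n-d$. Property (B) then says that the $g/2$ classes in one factor have pairwise distinct values of $d$, with arbitrary signs.

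\textbf{Step 1 (same-symbol edges).} Let $F_j=\{\{j+w,j-w\}:w\in[\frac{n-1}{2}]\}$, $j\in[n]$, be the cyclic near one-factorization of $K_n$ from Lemma~\ref{OF_n_odd}; vertex $j$ is isolated in $F_j$. Fix $r\in[n]$ and, exactly as for $E_r$ in Lemma~\ref{OF_n_odd}, take $\bigcup_{a\in[g]}F_{r+a}(a,a)$. Since $n>g$, the near one-factors $F_{r+1},\dots,F_{r+g}$ are distinct, so property (A) holds. This leaves uncovered exactly the vertices $(r+a,a)$, $a\in[g]$, in $g$ distinct parts. Match them by the cross edges $\{(r+a,a),(r+g+1-a,g+1-a)\}$ for $a\in[\frac g2]$. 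The ordered pair $(r+a,r+g+1-a)$ has difference $d_a:=g+1-2a\in\{1,3,\dots,g-1\}$, oriented from the smaller symbol to the larger.

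We must check that no cross edge is parallel to a same-symbol edge of this factor. The cross edge's part-pair lies in $F_m$ with $m\equiv r+\frac{g+1+n}{2}\pmod n$, and $m\notin\{r+1,\dots,r+g\}$ because $\frac{g+1+n}{2}>g$. The cross edges also use distinct part-pairs among themselves, since their positions are distinct. Call the resulting one-factor $E_r$. Over $r\in[n]$, the $E_r$'s cover every same-symbol edge exactly once, and in addition exactly the classes $+d_a$ for the symbol pair $\{a,g+1-a\}$, $a\in[\frac g2]$. For these classes to be among the labels $+d$ with $d\in[\frac{n-1}{2}]$, we need $g-1\le\frac{n-1}{2}$, i.e. $n\ge 2g-1$. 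This is where the hypothesis enters.

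\textbf{Step 2 (generic symbol pairs).} Take a one-factorization $G_1,\dots,G_{g-1}$ of $K_g$ with $G_1=\{\{a,g+1-a\}:a\in[\frac g2]\}$, labelled as in (\ref{GGG}). For $i\ge2$ and $k\in[\frac{n-1}{2}]$, let pair $w$ of $G_i$ use class $+(k+w)$ in one factor and $-(k+w)$ in another, with $k+w$ reduced mod $\frac{n-1}{2}$. This is the $C^1_{i,k}/C^2_{i,k}$ pattern of Lemma~\ref{OF_n_odd}. The values $k+w$ are distinct for distinct $w$ because $\frac g2\le\frac{n-1}{2}$, so Theorem~\ref{SB} applies. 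This gives $(g-2)(n-1)$ factors covering all classes of all pairs in $G_2,\dots,G_{g-1}$.

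\textbf{Step 3 (the pair $G_1$, the main obstacle).} For $G_1$, pair $a$ still needs every $-d$ and every $+d$ except $+d_a$. The ``$-$'' half is handled as in Step~2 with $\frac{n-1}{2}$ factors. For the ``$+$'' half we need $\frac{n-3}{2}$ factors. In row $\rho\in[\frac{n-3}{2}]$, give pair $a$ the class $+(d_a+\rho)$, reduced mod $\frac{n-1}{2}$ into $[\frac{n-1}{2}]$. Each column then omits exactly $d_a$, and within a row the values are distinct because the $d_a$ are distinct. The count matches the requirement:
\[
\tfrac{n-1}{2}+\tfrac{n-3}{2}=n-2=g(n-1)-n-(g-2)(n-1).
\]
Since the total number of factors is $g(n-1)$, it remains to check edge coverage, as in the earlier proofs. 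The step I expect to be most delicate is confirming that the cross edges of Step~1 create no parallel pair and that the representatives $d_a$ really stay in $[\frac{n-1}{2}]$. The latter is exactly where $n\ge 2g-1$ is used. The bound $n\ge g+3$ matters only for $g=2$, where it forces $n\ge5$ so that the cyclic shifts in Steps~1 and~3 are nondegenerate.
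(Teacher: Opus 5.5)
Your construction is correct and follows the paper's scheme. Both use $n$ Strategy~A factors $\bigcup_{a\in[g]}F_{r+a}(a,a)$, patched by cross edges whose symbol pairs form a distinguished one-factor $G_1$ of $K_g$. Both then use Strategy~B with $t=\frac g2$ for $G_2,\dots,G_{g-1}$, and a separate cyclic arrangement for the classes of $G_1$ that the patch did not use.

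The only difference is the patch itself. The paper joins $(i+a-1,a)$ to $(i+a,a+1)$ for odd $a$, so all patch edges lie in $D_1$ and $G_1=\{\{a,a+1\}\}$. It therefore needs $n\ge 2g-1$ to keep these edges off the near one-factors already used, and $n\ge g+3$ to cycle $G_1$ through $D_2,\dots,D_{\frac{n-1}{2}}$. Your reflected patch lies in a single unused near one-factor for every $n>g$, and it moves the use of $n\ge 2g-1$ to Step~3. Consequently $n\ge g+3$ is never needed. Contrary to your closing remark, your construction also works for $g=2$, $n=3$, where the ``$+$'' half is simply empty.
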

\proof Let $n$ be odd, $g$ be even, and $n\geq \mathop{\max}\{2g-1,g+3\}$.
For $r\in [n]$, define
$${F}_{r}=\left\{\{r+i,r-i\}:i\in \left[\frac{n-1}{2}\right]\right\},$$
where $r+i,r-i\in[n]\pmod{n}$.
%$r+i$ and $r-i$ are reduced modulo $n$ to lie in $[n]$.
Then  ${\cal F}=\{{F}_{1},{F}_{2},\ldots,{F}_{n}\}$ forms a near one-factorization of~$K_{n}$, such that in ${F}_{r}$, vertex $r$ is isolated.
Our construction proceeds in two steps.

\textbf{Step 1:} %Applying Strategy A with $s=g$, we have that $\mathcal{C}_{i}=\bigcup_{1\leq a \leq g}\mathcal{F}_{i+a-1}(a,a)$ ($1\leq i \leq n$), has $\frac{g(n-1)}{2}$ edges and it is an almost $1$-regular subgraph of $K_{2n\times g}$ without parallel edges.In order to construct a one-factor of $K_{2n\times g}$, we need look for $\frac{g}{2}$ edges again.
For any $i \in [n]$, define
$${A}_{i} = \left(\bigcup_{a\in [g]}{F}_{i+a-1}(a,a)\right)\bigcup\big\{\{(i+a-1,a),(i+a,a+1)\}:a\text{ is odd, } a<g\big\},$$
where $i+a-1,i+a\in[n]\pmod{n}$ and $a+1\in[g]\pmod{g}$. %$i+a-1,i+a\in[n]$ by modulo $n$ and $a+1\in[g]$ by modulo $g$.
Note that $g\leq\frac{n+1}{2}$ and $\{x,x+1\}\in {F}_{x+\frac{n+1}{2}}$, where $x+1,x+\frac{n+1}{2}\in [n]\pmod{n}$. % by modulo $n$.
Then  $F_{i+a-1}$, $a\in [g]$, and $\{\{i+a-1,i+a\}:~a\text{ is odd, } a<g\}$ are mutually disjoint. So each  ${A}_{i}$ does not contain parallel edges. For any vertex $(x,a)$ in $K_{n\times g}$, there exists a unique edge $e$ in  $F_{i+a-1}(a,a)$ that contains $(x,a)$ with $x\not\equiv i+a-1\pmod n$; and there exists a unique edge in  $\{\{(i+a-1,a),(i+a,a+1)\}:~a\text{ is odd, } a<g\big\}$ that contains the vertex $(i+a-1,a)$.
Therefore, each  ${A}_{i}$ forms a one-factor of $K_{n\times g}$ without parallel edges and we obtain $n$ one-factors of $K_{n\times g}$ without parallel edges.

\textbf{Step 2:} Since $g$ is even,
there exists a one-factorization of $K_{g}$, say
${\cal G}=\{{G}_{1},{G}_{2},\ldots,{G}_{g-1}\}$, where
$${G}_{i}=\left\{\{a_{i,w},b_{i,w}\}:w\in\left[\frac{g}{2}\right],a_{i,w}<b_{i,w} \right\},$$
and particularly $${G}_{1}=\big\{\{a,a+1\}:a\text{ is odd, } a <g\big\}.$$
%Assume w.l.o.g. that $a_{i,w}<b_{i,w}$ for $i\in[2,g-1]$ and $w\in[\frac{g}{2}]$.
%Assume that$a_{i,w}<b_{i,w}$ for $i \in [g-1]$, $w\in [\frac{g}{2}]$.
Fix $j\in [2,n-1]$ and
apply Strategy B with $t=\frac{g}{2}$ by (\ref{t}).
Since $n\geq g+3$, we have $\frac{n-3}{2} \geq \frac{g}{2}$ and
we choose ${g\over 2}$ distinct sets ${D}_{j+w}$, $w\in[\frac{g}{2}]$, satisfying property (B), where $j+w\in[2,\frac{n-1}{2}]\pmod{\frac{n-3}{2}}$ if $j\in [2,\frac{n-1}{2}]$, and $j+w\in[\frac{n+1}{2},n-1]\pmod{\frac{n-1}{2}}$ otherwise.
%Since $G_{1}=\left\{\{a_{1,w},b_{1,w}\}:w\in\left[\frac{g}{2}\right] \right\}$ is a one-factor of $K_{g}$,
Then we obtain a one-factor of $K_{n\times g}$ without parallel edges
$${B}_{1,j}=\bigcup_{w\in[\frac{g}{2}]}{D}_{j+w}(a_{1,w},b_{1,w}).$$
%where $j+w\in[2,\frac{n-1}{2}]\pmod{\frac{n-3}{2}}$ if $j\in [2,\frac{n-1}{2}]$, and $j+w\in[\frac{n+1}{2},n-1]\pmod{\frac{n-1}{2}}$ otherwise.
%Because the set $\{a_{1,w},b_{1,w}:w\in [\frac{g}{2}]\}$ contains all elements in $[g]$, %Property (B1) is satisfied.  Since $n\geq g+3$, we have $\frac{n-3}{2} \geq \frac{g}{2}$,  Property (B2) is satisfied.
%It follows that each ${B}_{1,j}$ is a one-factor of $K_{n\times g}$ without parallel edges.
When $j$ runs over $[2,n-1]$, we obtain $n-2$  one-factors of $K_{n\times g}$ without parallel edges.

Apply Strategy B with $t=\frac{g}{2}$ again.
%Since $G_{i}$ is a one-factor of $K_{g}$,
For any $i\in [2,g-1]$ and $j\in [n-1]$, define
$${B}_{i,j}=\bigcup_{w\in[\frac{g}{2}]}{D}_{j+w}(a_{i,w},b_{i,w}),$$
where $j+w\in[\frac{n-1}{2}]\pmod{\frac{n-1}{2}}$ if $j\in[\frac{n-1}{2}]$, and $j+w\in[\frac{n+1}{2},n-1]\pmod{\frac{n-1}{2}}$ otherwise.
Note that property (B) is satisfied and $G_{i}$ is a one-factor of $K_{g}$. %the set $\{a_{i,w},b_{i,w}:w\in [\frac{g}{2}]\}$ contains all elements in $[g]$.
So, each  ${B}_{i,j}$ is a one-factor of~$K_{n\times g}$ without parallel edges.

Now, we prove that $$\big\{{A}_{i}:i\in[n]\big\}\bigcup \big\{{B}_{i,j}:i\in[g-1],j\in [n-1],(i,j)\neq (1,1)\big\}$$
forms a one-factorization of $K_{n\times g}$.
By Lemma \ref{d3},
we only need to prove that
any edge $\{(x,a),(y,b)\}$ of $K_{n\times g}$ occurs at least once.\vspace{-0.2cm}
\begin{itemize}
 \item [$(1)$] If $a=b$, there exists an $r\in[n]$ such that $\{x,y\}\in {F}_{r}$ since  ${\cal F}$ is a near one-factorization of $K_{n}$. Then the edge $\{(x,a),(y,a)\}\in {F}_{r}(a,a)\subseteq{A}_{i}$, where $i\in [n]$ and $i+a-1\equiv r\pmod{n}$.\vspace{-0.2cm}
 \item [$(2)$] If $a< b$, there exists an $i\in [g-1]$ such that $\{a,b\}=\{a_{i,w},b_{i,w}\}\in {G}_{i}$ as \ ${\cal G}$ is a one-factorization of $K_{g}$.
     We can find $r\in [n-1]$ such that $(x,y)\in{D}_{r}$ since the sets $D_{1},D_{2},\ldots,D_{n-1}$ form a partition of all ordered pairs of $[n]$ by (\ref{E1}). We divide the discussion as follows.
\begin{itemize}
 \item [$(a)$] If $i=r=1$, then $\{(x,a),(y,b)\}\in {A}_{k}$, where $k\in[n]$ and $k+a-1\equiv x\pmod{n}$.%\vspace{-0.2cm}
 \item [$(b)$] If $i=1$ and $r\in [2,{n-1}]$, then $\{(x,a),(y,b)\}\in {D}_{j+w}(a_{1,w},b_{1,w})\subseteq{B}_{1,j}$, where $j\in[2,\frac{n-1}{2}]$,
        $j+w \equiv r\pmod {\frac{n-3}{2}}$ if  $r\in [2,\frac{n-1}{2}]$ and  $j\in[\frac{n+1}{2},n-1]$,
      $j+w \equiv r\pmod {\frac{n-1}{2}}$ otherwise.%\vspace{-0.2cm}
 \item [$(c)$] If $i\neq1$ and $r\in [{n-1}]$, then $\{(x,a),(y,b)\}\in {D}_{j+w}(a_{i,w},b_{i,w})\subseteq{B}_{i,j}$, where
        $j+w \equiv r\pmod {\frac{n-1}{2}}$ with $j \in[\frac{n-1}{2}]$ if  $r\in [\frac{n-1}{2}]$ and $j\in[\frac{n+1}{2}, n-1]$ otherwise.%\vspace{-0.2cm}
     \end{itemize}\vspace{-0.2cm}
\end{itemize}\vspace{-0.2cm}
In conclusion, an $\mathrm{OF}(n,g)$ is obtained.
\qed

\subsection{Constructions for OF$(2n,g)$}
In this subsection we will construct $\mathrm{OF}(2n,g)$s for relatively large $n$ in three propositions.

\begin{Proposition}\label{OF_g_odd_1}
There exists an $\mathrm{OF}(2n,g)$ for any odd $g\geq 3$ and odd $n\geq g$.
\end{Proposition}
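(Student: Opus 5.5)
The plan is to imitate the second half of the doubling construction (Lemma~\ref{OF_double}), since no $\mathrm{OF}(n,g)$ is available as input when $gn$ is odd. Split the $2n$ parts into two halves, indexed by $[n]$ and $[n+1,2n]$. Let $\mathcal F=\{F_1,\dots,F_n\}$ be the cyclic near one-factorization of $K_n$ from Proposition~\ref{OF_g_even_1}, with vertex $r$ isolated in $F_r$, used simultaneously on both halves (shift by $n$ for the second). For the cross edges, use the one-factors $H_r=\{\{x,x+n+r\}:x\in[n]\}$, $r\in[n]$, of $K_{2n}$ from Lemma~\ref{OF_double}. Write $H_n=H_0$ for the ``difference $0$'' factor $\{\{x,x+n\}\}$. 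We need $g(2n-1)$ one-factors, and we build them in three families.

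\textbf{Family 1 ($n$ factors).} For $i\in[n]$, take $\bigcup_{a\in[g]}F_{i+a-1}(a,a)$ in both halves. This is Strategy A with $s=g\le n$, valid because $n\ge g$. The vertices left uncovered are exactly $(i+a-1,a)$ and $(i+a-1+n,a)$ for $a\in[g]$. Join each such pair by the cross edge $\{(x,a),(x+n,a)\}$ with $x=i+a-1$, which lies in $H_0(a,a)$. There are no parallel edges, since within a half the classes $F_{i+a-1}$ are distinct and the cross edges connect distinct part pairs. Over all $i$, this family uses every same-colour edge inside the halves and every edge of $H_0(a,a)$.

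\textbf{Family 2 ($g(n-1)$ factors).} Let $\{G_1,\dots,G_g\}$ be a near one-factorization of $K_g$ with colour $i$ isolated in $G_i$. Apply Strategy B with $t=\frac{g-1}{2}$ in each half, exactly as in Proposition~\ref{OF_g_even_1}. That is, take $\bigcup_{w}D_{j+w}(a_{i,w},b_{i,w})$ with $j+w$ cycling inside $[\frac{n-1}{2}]$ or inside $[\frac{n+1}{2},n-1]$. This is possible because $\frac{n-1}{2}\ge\frac{g-1}{2}$, and property (B) holds since each index block contains at most one member of every forbidden pair $\{r,n-r\}$. Colour $i$ is then missing at every vertex of both halves. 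Complete the factor with the cross factor $H_{\rho(j)}(i,i)$, where $\rho$ is a bijection from $[n-1]$ onto the nonzero differences. As $j$ ranges over $[n-1]$ and $i$ over $[g]$, this family covers all distinct-colour edges inside the halves; the orientation $(a,b)$ versus $(b,a)$ is handled by $D_r$ versus $D_{n-r}$, as in the earlier proofs. It also covers all same-colour cross edges of nonzero difference.

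\textbf{Family 3 ($n(g-1)$ factors).} The remaining edges are the cross edges with distinct colours, in all $n$ differences. For $d\in[g-1]$ and $k\in[n]$, set $C_{d,k}=\bigcup_{j\in[g]}H_{k+j}(j,j+d)$, with $j+d$ reduced mod $g$ and $k+j$ mod $n$. The $g$ classes $H_{k+1},\dots,H_{k+g}$ are distinct because $n\ge g$. Both colour coordinates run over permutations of $[g]$, so property (A) holds and each $C_{d,k}$ is a one-factor without parallel edges. A given edge in $H_r(a,b)$ with $a\ne b$ lies in $C_{d,k}$ exactly when $d\equiv b-a\pmod g$ and $k\equiv r-a\pmod n$. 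So each such edge is covered exactly once.

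\textbf{Count and main risk.} The total is $n+g(n-1)+n(g-1)=g(2n-1)$, matching Lemma~\ref{d3}, so it suffices to check that every edge is covered at least once. The main point needing care is Family 2. There I must verify that the cyclic index choice of $D_{j+w}$ within each of the two blocks, combined with the near one-factors $G_i$, covers every ordered distinct-colour pair on every $D_r$, $r\in[n-1]$, exactly once. I also need to pair the index $j$ with a nonzero difference through $\rho$ so that each $H_r(i,i)$, $r\ne 0$, is used exactly once per colour. Both checks should follow the same modular bookkeeping as in the proof of Theorem~\ref{ODAR_odd_n}.
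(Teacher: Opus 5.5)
Your proposal is correct and is essentially the paper's own proof. In the paper's notation, where $I_r$ is your cross factor and $H_r$ is the doubled $F_r$ together with the edge $\{r,r+n\}$, your Families 1, 3 and 2 are exactly the paper's Steps 1, 2 and 3, namely $\bigcup_a H_{i+a}(a,a)$, $\bigcup_a I_{j+a}(a,a+i)$ and $\bigcup_w D'_{j+w}(a_{i,w},b_{i,w})\cup I_j(i,i)$; the paper takes your bijection $\rho$ to be the identity, and the coverage bookkeeping you flag goes through exactly as in its final case check.
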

\proof We take a near one-factorization $\{{G}_{1},{G}_{2},\ldots,{G}_{g}\}$ of $K_{g}$, such that in ${G}_{i}$, $i\in[g]$, vertex~$i$ is isolated, where
%List the pairs in $G_{i}$ by
$${G}_{i}=\left\{\{a_{i,w},b_{i,w}\}:w\in\left[\frac{g-1}{2}\right],a_{i,w}<b_{i,w}\right\}.$$
%Assume w.l.o.g. that $a_{i,w}<b_{i,w}$ for $i\in[g]$ and $w\in[\frac{g-1}{2}]$.
Furthermore, let $\{{F}_{1},{F}_{2},\ldots,{F}_{n}\}$ be a near one-factorization of $K_{n}$, such that in ${F}_{r}$, $r\in [n]$, vertex $r$ is isolated.
%We construct $n$ mutually disjoint one-factors of $K_{2n}$ based on ${F}_{r}$, $r\in [n]$.

For any $r\in [n]$, define
$${H}_{r} = \big\{\{x,y\},\{x+n,y+n\}:\{x,y\}\in {F}_{r}\big\}\bigcup\big\{\{r,r+n\}\big\}.$$
Moreover, for any $r\in[n]$, define
$${I}_{r}=\big\{\{x,x+r+n\}:x\in [n]\big\},$$
where $x+r+n\in[n+1,2n]\pmod{n}$. %$x+r+n$ is reduced modulo $n$ to lie in $[n+1,2n]$.
Note that $I_{n}=\big\{\{r,r+n\}:r\in [n]\big\}$. Then it is readily checked that  $\{{H}_{r}:r\in [n]\}\bigcup\{{I}_{r}:r\in[n-1]\}$ forms a one-factorization of $K_{2n}$.
Then our construction proceeds in three steps.

\textbf{Step 1:} For any $i\in [n]$, the set $\{i+a\in[n]\pmod n:a\in[g]\}$ contains $g$ distinct elements in $[n]$ since $n\geq g$.
Note that ${H}_{1},{H}_{2},\ldots,{H}_{n}$ are $n$ pairwise disjoint one-factors of $K_{2n}$.
Applying Strategy A with $s=g$, we have that
$${A}_{i} = \bigcup_{a\in [g]}{H}_{i+a}(a,a), i\in [n],$$
 are $n$ one-factors of $K_{2n\times g}$ without parallel edges,
where $i+a \in [n]\pmod n$. Clearly, every edge of the form $\{(x,a),(y,a)\}$, where $\{x,y\}\in H_{r}$, $r\in[n]$, $a\in[g]$, can be found in the set $\{A_{i}:i\in[n]\}$.

\textbf{Step 2:} For any $j\in [n]$, the set $\{j+a\in[n]\pmod n:a\in [g]\}$ contains $g$ distinct elements in $[n]$ since $n\geq g$. Moreover, ${I}_{1},{I}_{2},\ldots,{I}_{n}$ are $n$ pairwise disjoint one-factors of $K_{2n}$.
We apply Strategy A with $s=g$ again.
Note that (1) $e\cap[n]\neq\emptyset,e\cap[n+1,2n]\neq\emptyset$ for any $e\in I_r,r\in[n]$, and (2) $\{a:a\in [g]\}=\{a+i\in [g] \pmod g:a\in [g]\}=[g]$ for any $i \in [g-1]$.
For any $i \in [g-1]$ and $j\in[n]$, define
$${B}_{i,j}=\bigcup_{a\in [g]}{I}_{j+a}(a,a+i),$$
where $j+a\in[n]\pmod n$ and $a+i\in [g]\pmod g$.
It is immediate that property (A) is satisfied; hence
${B}_{i,j}$, $i \in [g-1]$, $j\in[n]$, are $(g-1)n$ one-factors of $K_{2n\times g}$ without parallel edges. In this step, we run out of all edges of the form
$\{(x,a),(y,b)\}$, where $\{x,y\}\in I_{r}$, $r\in[n]$, $a\neq b\in[g]$.

\textbf{Step 3:}
In order to obtain an $\mathrm{OF}(2n,g)$, we still need $g(n-1)$ one-factors of $K_{2n\times g}$ by Lemma \ref{d3}.
For any $r\in [n-1]$, define ${D}'_{r}=\{(x,y),(x+n,y+n):(x,y)\in {D}_{r}\}$, where~${D}_{r}$ is defined in (\ref{E1}).
Note that $(x,y)\in D'_{r_{1}}$ implies $(y,x)\in D'_{r_{2}}$, where $r_{1}+r_{2}=n$ with $r_{1},r_{2}\in [n-1]$. Then we partition the set $[n-1]$ into two parts $[\frac{n-1}{2}]$ and $[\frac{n+1}{2},n-1]$, from which we define
for any $i \in [g]$ and $j\in [n-1]$,
$${C}_{i,j}=\bigcup_{w \in [\frac{g-1}{2}]}{D}'_{j+w}(a_{i,w},b_{i,w})\bigcup{I}_{j}(i,i),$$
where $j+w\in [\frac{n-1}{2}]\pmod{\frac{n-1}{2}}$ if $ j \in [\frac{n-1}{2}]$ and $j+w\in [\frac{n+1}{2},n-1]\pmod{\frac{n-1}{2}}$  otherwise.
From such a partition $[n-1]=[\frac{n-1}{2}]\bigcup[\frac{n+1}{2},n-1]$, it follows that each ${C}_{i,j}$ dose not contain parallel edges.
Since $\{a_{i,w},b_{i,w}:w \in [\frac{g-1}{2}]\}=[g]\setminus \{i\}$,  each vertex $(x,a)$ with $x\in[2n]$ and $a\in[g]\setminus\{i\}$ is contained in exactly one edge in   $\{{D}'_{j+w}(a_{i,w},b_{i,w}): w \in [\frac{g-1}{2}]\}$;
and the vertex $(x,i)$ with $x\in[2n]$ is contained exactly once in ${I}_{j}(i,i)$.
%In this step, we run out of all edges of the form$\{(x,a),(y,b)\}$, $\{x,y\}\in I_{r}$ for some $r\in[n]$, $a\neq b\in[g]$

Now, we prove that $$\{A_{i}:i\in[n]\}\bigcup\{{B}_{i,j}:i\in[g-1],j\in[n]\}\bigcup\{{C}_{i,j}:i\in[g],j\in[n-1]\}$$ is a one-factorization of $K_{2n\times g}$.
By Lemma \ref{d3},
we only need to prove that
any edge $\{(x,a),(y,b)\}$ with $x<y$ of $K_{2n\times g}$ occurs at least once.
%Let $\{(x,a),(y,b)\}$ be an edge of $K_{2n\times g}$ with $x<y$.
We consider two cases.
\vspace{-0.2cm}
\begin{itemize}
 \item [$(1)$] $x\in [n]$ and $y\in [n+1,2n]$.
 Let $r\in [n]$ and $y-n-x\equiv r\pmod{n}$.
 If $a=b$ and $r=n$, then there exists a $\{x,y\}\in {H}_{x}$, so $\{(x,a),(y,a)\}\in{H}_{x}(a,a)\subseteq{A}_{i}$ with $i\in[n]$ and $i+a\equiv x\pmod{n}$.
  If $a=b$ and $r\neq n$, then there exist $\{x,y\}\in {I}_{r}$ and $\{(x,a),(y,a)\}\in{I}_{r}(a,a)\subseteq{C}_{a, r}$.
     If $a\neq b$, let $i\in[g-1]$ with $b- a\equiv i \pmod {g}$. Then $\{(x,a),(y,b)\}\in{I}_{r}(a,b)\subseteq {B}_{i,j}$, where $j\in[n]$ and $j+a\equiv r\pmod{n}$.\vspace{-0.2cm}
 \item [$(2)$] $\{x,y\}\subseteq [n]$ or $\{x,y\}\subseteq [n+1,2n]$. If $a=b$, there exists an $r\in[n]$ such that $\{x,y\}\in {H}_{r}$. Then $\{(x,a),(y,b)\}\in{H}_{r}(a,a)\subseteq{A}_{i}$ with $i\in[n]$ and $i+a\equiv r\pmod{n}$. If $a\neq b$ and $\{a,b\}=\{a_{i,w},b_{i,w}\}\in G_{i}$, let $r\in[n-1]$ and $x-y\equiv r \pmod {n}$.
     If $r\in [\frac{n-1}{2}]$, then $\{(x,a),(y,b)\}\in {D}_{r}(a_{i,w},b_{i,w})\subseteq{C}_{i,j}$ with $j\in [\frac{n-1}{2}]$ and $j+w\equiv r\pmod{\frac{n-1}{2}}$.
     If $r\in [\frac{n+1}{2},n-1]$, then $\{(x,a),(y,b)\}\in {D}_{r}(a_{i,w},b_{i,w})\subseteq{C}_{i,j}$ with $j\in [\frac{n+1}{2},n-1]$ and $j+w\equiv r\pmod{\frac{n-1}{2}}$.\vspace{-0.2cm}
\end{itemize}
So, the desired $\mathrm{OF}(2n,g)$ is obtained.
\qed

%\newpage
\begin{Proposition}\label{OF_g_even_2}
There exists an $\mathrm{OF}(2n,g)$ for any even $g$ and even  $n\geq2g$.
\end{Proposition}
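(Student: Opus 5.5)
The plan is to imitate the proof of Proposition~\ref{OF_g_odd_1}: build a one-factorization of $K_{2n}$ that separates the edges inside the two halves $[n]$ and $[n+1,2n]$ from the crossing edges, then apply Strategies A and B to its pieces.

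\textbf{Setting up $K_{2n}$.} Since $n$ is even, take the sets $D_1,\ldots,D_n$ of (\ref{E1})--(\ref{E3}) on $[n]$. The pairs $\{x,x+\frac n2\}$ coming from $D_{\frac n2}$ (equivalently $D_n$) form a one-factor $M$ of $K_n$. Extend $M$ to a one-factorization $\{F_1,\ldots,F_{n-1}\}$ of $K_n$ with $F_{n-1}=M$, and double each factor: $H_r=\{\{x,y\},\{x+n,y+n\}:\{x,y\}\in F_r\}$. For the crossing edges put $I_r=\{\{x,x+r+n\}:x\in[n]\}$, $r\in[n]$. Then $\{H_r:r\in[n-1]\}\cup\{I_r:r\in[n]\}$ is a one-factorization of $K_{2n}$. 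Let $D'_r$ be the doubled copy of $D_r$ on both halves.

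\textbf{Step 1: non-constant symbol pairs on $D_r$, $r\neq\frac n2,n$.} Take a one-factorization $\{G_1,\ldots,G_{g-1}\}$ of $K_g$ with $G_i=\{\{a_{i,w},b_{i,w}\}\}$ and $a_{i,w}<b_{i,w}$. Split $[n-1]\setminus\{\frac n2\}$ into $[\frac n2-1]$ and $[\frac n2+1,n-1]$; each part contains at most one member of every forbidden pair $\{r,n-r\}$. For $i\in[g-1]$ and $j\in[n-1]\setminus\{\frac n2\}$, with $j+w$ reduced cyclically inside the part containing $j$, set
$$C_{i,j}=\bigcup_{w\in[\frac g2]}D'_{j+w}(a_{i,w},b_{i,w}).$$
This needs $\frac n2-1\ge\frac g2$, which holds because $n\ge 2g$. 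By Strategy B (Theorem~\ref{SB}, applied to both halves simultaneously), each $C_{i,j}$ is a one-factor without parallel edges. Together the $C_{i,j}$ cover exactly the within-half edges with $a\neq b$ whose coordinate pair lies in some $D_r$, $r\ne\frac n2,n$. This gives $(g-1)(n-2)$ factors.

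\textbf{Step 2: edges of $M$ and crossing edges.} Orient the doubled $M$ as $X_0=\{(x,x+\frac n2),(x+n,x+n+\frac n2):x\in[\frac n2]\}$, so every unordered edge of doubled $M$ appears exactly once as an ordered pair. Let $X_1,\ldots,X_n$ be the $I_r$ oriented from $[n]$ to $[n+1,2n]$. For $i\in[0,g-1]$ and $j\in[0,n]$, define
$$B_{i,j}=\bigcup_{a\in[g]}X_{j+a}(a,a+i),$$
with indices taken modulo $n+1$ and $a+i\in[g]\pmod g$. Because $n+1>g$, the $g$ factors used in one $B_{i,j}$ are distinct, and property (A) holds. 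Hence each $B_{i,j}$ is a one-factor without parallel edges by Theorem~\ref{SA}. The pair $(a,a+i)$ ranges over all ordered symbol pairs, so the $g(n+1)$ graphs $B_{i,j}$ cover every crossing edge and every edge of doubled $M$ exactly once. This is where the orientation $X_0$ matters: using $H_{n-1}(a,a+i)$ without orientation would double-count.

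\textbf{Step 3: remaining constant-symbol edges.} What is left are the edges $H_r(a,a)$ for $r\in[n-2]$. Set $A_j=\bigcup_{a\in[g]}H_{j+a}(a,a)$ for $j\in[n-2]$, with indices modulo $n-2$. This is valid by Strategy A since $n-2\ge g$.

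\textbf{Counting and where the difficulty lies.} The total is $(g-1)(n-2)+g(n+1)+(n-2)=g(2n-1)$, the number required by Lemma~\ref{d3}. It therefore suffices to check that every edge appears at least once, which is a case split on within-half versus crossing edges and on $a=b$ versus $a\ne b$, as in the earlier proofs. The main obstacle I expect is the bookkeeping of parallel edges in Step 2. The factor $X_0$ covers only half of each of its pairs in each half, so one must check that its edges never conflict with the crossing factors inside the same $B_{i,j}$. I believe this is fine because distinct one-factors of $K_{2n}$ share no pair of parts, and one vertex of each part per symbol is used exactly once. If this step fails, I would instead absorb doubled $M$ into a Latin-square mixing with two of the $I_r$.
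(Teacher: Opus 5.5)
Your Steps 1 and 3 and the final count are correct; Step 1 is the paper's Strategy B step. Step 2 fails, though, and the cause is not parallel edges. Property (A) is violated, so your claim that each vertex of a part receives each symbol exactly once is false.

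Fix $i\neq 0$ and a $j$ for which $X_0=X_{j+a_0}$ is one of the $g$ factors in $B_{i,j}$. A vertex $z\in[\frac n2+1,n]$ is the second coordinate of its pair in $X_0$, so it gets symbol $a_0+i$ there. In every crossing factor $X_{j+a}$, $a\neq a_0$, it is a first coordinate and gets $a$. Its symbols are therefore $\{a:a\neq a_0\}\cup\{a_0+i\}$, so $a_0+i$ occurs twice and $a_0$ never. The same happens for $z\in[n+1,\frac{3n}{2}]$, which is a first coordinate in $X_0$ but a second coordinate in the crossing factors.

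Concretely, for $g=2$, $n=4$ you get $B_{1,4}=X_0(1,2)\cup X_1(2,1)$. It contains both $\{(1,1),(3,2)\}$ and $\{(3,2),(8,1)\}$ and misses $(3,1)$. So all $g(g-1)$ graphs $B_{i,j}$ with $i\neq 0$ that involve $X_0$ fail to be one-factors, and the construction does not go through.

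What is missing is a way to mix the $a\neq b$ edges on the pairs $\{x,x+\frac n2\}$ with crossing edges while each part still sees a permutation of $[g]$. The paper does this by choosing the crossing one-factorization non-cyclically, in two families:
\begin{itemize}
\item $I_r$, $r\in[\frac n2]$, joins $[\frac n2]$ to $[\frac{3n}{2}+1,2n]$ and $[\frac n2+1,n]$ to $[n+1,\frac{3n}{2}]$. This family is handled with $(a,a+i)$, exactly as you handle crossing edges.
\item $I_r$, $r\in[\frac n2+1,n]$, joins $[\frac n2]$ to $[n+1,\frac{3n}{2}]$ and $[\frac n2+1,n]$ to $[\frac{3n}{2}+1,2n]$. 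This family is mixed with $I_{n+1}$ (your doubled $M$, oriented from smaller to larger) using the arrays $M_i$: the four quarters of $[2n]$ receive symbols $a$, $a+i$, $a+i$, $a+2i$.
\end{itemize}
Every edge of these $\frac n2+1$ factors then joins two quarters whose offsets differ by exactly $i$. Hence each edge gets symbol difference $i$, and each vertex receives all of $[g]$ as $a$ ranges over $[g]$.

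With your cyclic $I_r=\{\{x,x+r+n\}:x\in[n]\}$, for $r\notin\{\frac n2,n\}$ a single factor sends part of $[\frac n2]$ into $[n+1,\frac{3n}{2}]$ and part into $[\frac{3n}{2}+1,2n]$. So this quarter-offset assignment is unavailable. You would have to rebuild the crossing factors in the paper's quarter-compatible form; your Latin-square fallback does not say how to achieve this.
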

\proof
 There exists a one-factorization of $K_{g}$, say $\{{G}_{1},{G}_{2},\ldots,{G}_{g-1}\}$. %where ${G}_{i}=\big\{\{a_{i,1},b_{i,1}\},$ $\{a_{i,2},b_{i,2}\},\ldots,\{a_{i,\frac{g}{2}},b_{i,\frac{g}{2}}\}\big\}$ with $a_{i,w}<b_{i,w}$ $(i\in[g-1],w\in[\frac{g}{2}])$.
Let $${G}_{i}=\left\{\{a_{i,w},b_{i,w}\}:w\in\left[\frac{g}{2}\right],a_{i,w}<b_{i,w}\right\}.$$
%Assume w.l.o.g. that $a_{i,w}<b_{i,w}$ for $i\in[g-1]$ and $w\in[\frac{g}{2}]$.
Furthermore, we take a one-factorization of $K_{n}$, say $\{{F}_{1},{F}_{2},\ldots,{F}_{n-1}\}$.
For any $r\in [n-1]$, define a one-factor of $K_{2n}$, say
$${H}_{r} = \big\{\{x,y\},\{x+n,y+n\}:\{x,y\}\in {F}_{r}\big\}.$$
For any $r\in[\frac{n}{2}]$, define
$${I}_{r}=\big\{\{x,x+r\}:x\in[n]\big\},$$
where $x+r\in[\frac{3n}{2}+1,2n]\pmod{\frac{n}{2}}$  if $x\in[\frac{n}{2}]$ and $x+r \in [n+1,\frac{3n}{2}]\pmod{\frac{n}{2}}$ if $x\in[\frac{n}{2}+1, n]$.
For $r\in[\frac{n}{2}+1, n]$, define
$${I}_{r}=\big\{\{x,x+r\}:x\in [n]\big\},$$
where $x+r \in [n+1,\frac{3n}{2}]\pmod{\frac{n}{2}}$ if $x\in[\frac{n}{2}]$ and $x+r\in [\frac{3n}{2}+1,2n]\pmod{\frac{n}{2}}$ if $x\in[\frac{n}{2}+1,n]$.
For $r\in[n,2n-1]$, define ${H}_{r} ={I}_{r-n+1}$.
It can be checked that %$\{\mathcal{H}_{r}:1\leq r \leq n-1\}\cup\{\mathcal{I}_{r}:1\leq r \leq n\}$
 $\{{H}_{r}:r\in [2n-1]\}$ forms a one-factorization of $K_{2n}$.
Then this construction proceeds in three steps.

\textbf{Step 1:} %Note that  ${H}_{r}$, $r\in [2n-1]$ are mutually disjoint one-factors of $K_{2n}$.
Since $g<2n$, for any $i\in [2n-1]$, the set $\{i+a\in [2n-1]\pmod{2n-1}:a\in [g]\}$ contains $g$ distinct elements in $[2n-1]$.
Applying Strategy A with $s=g$ gives that
$${A}_{i} = \bigcup_{a\in[g]}{H}_{i+a}(a,a), i\in[2n-1],$$
are $2n-1$ one-factors of $K_{2n\times g}$ without parallel edges,
where $i+a\in [2n-1]\pmod{2n-1}$.
Now, we run out of all edges of the form
$\{(x,a),(y,a)\}$, $x\neq y \in[2n]$, $a\in[g]$ as $\{{H}_{r}:r\in [2n-1]\}$ is a one-factorization of $K_{2n}$.

\textbf{Step 2:}
For any $r\in [n-1]\setminus\{\frac{n}{2}\}$, define ${D}'_{r}=\{(x,y),(x+n,y+n):(x,y)\in {D}_{r}\}$, where~${D}_{r}$ is defined in (\ref{E1}).
Note that for $r_{1},r_{2}\in [n-1]\setminus\{\frac{n}{2}\}$, $(x,y)\in D'_{r_{1}}$ implies $(y,x)\in D'_{r_{2}}$, where $r_{1}+r_{2}=n$.
For $i\in [g-1]$, $j\in [n-1]\setminus\{\frac{n}{2}\}$, define
$${B}_{i,j}=\bigcup_{w\in[\frac{g}{2}]}{D}'_{j+w}(a_{i,w},b_{i,w}),$$
where $j+w\in [\frac{n}{2}-1]\pmod{\frac{n}{2}-1}$  if $j\in[\frac{n}{2}-1]$ and $j+w\in [\frac{n}{2}+1,n-1]\pmod{\frac{n}{2}-1}$ otherwise.
Since $\frac{g}{2}\le \frac{n}{2}-1$, each ${B}_{i,j}$ does not contain parallel edges.  Let $i,j$ be fixed. For any vertex $(x,a)$ in $K_{2n\times g}$, we can find $w\in [\frac{g}{2}]$ such that $a_{i,w}=a$ or $b_{i,w}=a$ since each ${G}_{i}$ is a one-factor of $K_{g}$.
Then there exists a unique edge in ${D}'_{j+w}(a_{i,w},b_{i,w})\subseteq {B}_{i,j}$ that contains~$(x,a)$. So each~${B}_{i,j}$ is a one-factor of $K_{2n\times g}$ without parallel edges.
In this step, we run out of all edges of the form
$\{(x,a),(y,b)\}$, $\{x,y\}\subseteq [n]$ or $\{x,y\}\in[n+1,2n]$ except $y-x\equiv\frac{n}{2}\pmod{n}$, $a\neq b\in[g]$.
%$(x,y)\in {D}'_{r}$ for some $r\in[n-1]\setminus\{\frac{n}{2}\}$, $a\neq b\in[g]$.

\textbf{Step 3:}
For any $i\in [g-1]$, $j\in [\frac{n}{2}]$, define
$${C}_{i,j}=\bigcup_{a \in [g]}{I}_{j+a}(a,a+i),$$
where the subscript $j+a \in [\frac{n}{2}]\pmod{\frac{n}{2}}$ and the second component $a+i\in[g]\pmod g$. Let~$i,j$ be fixed.
Since $g\leq \frac{n}{2}$, the one-factors ${I}_{j+a},a\in [g]$ used in $C_{i,j}$ are pairwise disjoint. Note that each edge of $I_{r}$, $r\in[\frac{n}{2}]$ intersects both the sets $[n]$ and $[n+1,2n]$.
Further note $\{a:a\in [g]\}=\{a+i\in[g]\pmod g:a\in [g]\}=[g]$. Thus, property (A) is satisfied. Then each~$C_{i,j}$ is a one-factor of $K_{2n\times g}$ without parallel edges by Strategy A.

Denote ${I}_{n+1}=\left\{\{x,x+\frac{n}{2}\}:x \in[\frac{n}{2}]\bigcup[n+1, \frac{3n}{2}]\right\}$.
Note that each edge of ${I}_{r}(a,b)$, $r\in [\frac{n}{2}+1,n+1],a\neq b \in [g]$, has not been used. Next, we distribute all missing edges of  $K_{2n\times g}$.

For any $i\in [g-1]$, define a $g\times 2n$ array over $[g]$ by
$$
M_{i}=\big(m_{a,x}^{(i)}\big)_{g\times 2n}=\left(\underbrace{\begin{matrix}
1 & \cdots & 1  \\
2 & \cdots& 2  \\
\vdots & &\vdots\\
g & \ldots& g \\
\end{matrix}}_\frac{n}{2}
\text{ }
\underbrace{\begin{matrix}
1+i & \cdots & 1+i  \\
2+i & \cdots & 2+i  \\
\vdots & &\vdots \\
g+i & \cdots & g+i \\
\end{matrix}}_n
\text{ }
\underbrace{\begin{matrix}
1+2i & \cdots & 1+2i \\
2+2i & \cdots & 2+2i \\
 \vdots&&\vdots\\
g+2i & \cdots & g+2i \\
\end{matrix}}_\frac{n}{2}
\right), $$
where the addition is taken by modulo $g$.
Note that all columns of $M_{i}$ form permutations of~$1,2,\ldots,g$.

For any $r\in[\frac{n}{2}+1,n+1]$, $a\in[g]$, $i \in [g-1]$, define %construct an almost $1$-regular subgraph of $K_{2n\times g}$ with edge set
$${K}_{a,r}^{i}=\left\{\big\{(x,m_{a,x}^{(i)}),(y,m_{a,y}^{(i)})\big\}:\{x,y\}\in {I}_{r},x<y\right\}.$$
Based on this, for $i \in [g-1]$ and $j\in[\frac{n}{2}+1,n+1]$, define
$${C}_{i,j}=\bigcup_{a\in[g]}{K}_{a,j+a}^{i},$$
where $j+a\in [\frac{n}{2}+1,n+1]\pmod{\frac{n}{2}+1}$.
 It is not difficult to have that Strategy A is applicable and thus each  ${C}_{i,j}$, $i \in [g-1]$, $j\in [\frac{n}{2}+1,n+1]$, is a one-factor of $K_{2n\times g}$ without parallel edges.

Now, we prove that  $$\big\{{A}_{i}:i\in [2n-1]\big\}\bigcup \bigg\{{B}_{i,j}:i\in[g-1],j\in [n-1]\setminus\left\{\frac{n}{2}\right\}\bigg\}\bigcup \big\{{C}_{i,j}:i\in[g-1],j\in[n+1]\big\}$$ forms a one-factorization of $K_{2n\times g}$. %Let $\{(x,a),(y,b)\}$ be an edge in $K_{2n\times g}$.
By Lemma \ref{d3},
we only need to prove that
any edge $\{(x,a),(y,b)\}$ with $x<y$ of $K_{2n\times g}$ occurs at least once.
We have two cases to be considered.
\vspace{-0.2cm}
\begin{itemize}
 \item [$(1)$] $a=b$. We have $\{x,y\}\in {H}_{r}$ for some $r\in[2n-1]$, since $\{{H}_{r}:r\in[2n-1]\}$ is a one-factorization of $K_{2n}$. Then $\{(x,a),(y,b)\}\in{H}_{r}(a,a)\subseteq{A}_{i}$, where $i\in [2n-1]$ and $i+a\equiv r\pmod{2n-1}$.\vspace{-0.2cm}
 \item [$(2)$] $a\neq b$. We consider two possibilities.
 \begin{itemize}
 \item[(i)]
 $(x,y)\in {D}'_{r}$ for some $r\in[n-1]\setminus\{\frac{n}{2}\}$. Let $\{a,b\}=\{a_{i,w},b_{i,w}\}$ for some $i\in[g-1]$ and $w\in [\frac{g}{2}]$.
     If $r\in[\frac{n}{2}-1]$, then $\{(x,a),(y,b)\}\in{D}'_{r}(a_{i,w},b_{i,w})\subseteq{B}_{i,j}$, where $j\in[\frac{n}{2}-1]$ and $j+w\equiv r\pmod{\frac{n}{2}-1}$.
     If $r\in[\frac{n}{2}+1,n-1]$,
     then $\{(x,a),(y,b)\}\in{D}'_{r}(a_{i,w},b_{i,w})\subseteq{B}_{i,j}$, where $j\in[\frac{n}{2}+1,n-1]$ and $j+w\equiv r\pmod{\frac{n}{2}-1}$.
   \item[(ii)]  $\{x,y\}\in {I}_{r}$ for some $r\in [n+1]$. Let $x<y$ and let $b-a \equiv i \pmod{g}$. When $r\in [\frac{n}{2}]$,
     we have $\{(x,a),(y,b)\}\in{I}_{r}(a,a+i)\subseteq {C}_{i,j}$, where $j\in[\frac{n}{2}]$ and $j+a\equiv r\pmod{\frac{n}{2}}$.
     When $r\in[\frac{n}{2}+1, n+1]$, we have $\{(x,a),(y,b)\}\in{K}_{a',r}^{i}\subseteq {C}_{i,j}$, where $j\in[\frac{n}{2}+1,n+1]$, $j+a'\equiv r\pmod{\frac{n}{2}+1}$
      with
     $a'=a$ if $x\in [{n\over 2}]$ and $a'+i\equiv a\pmod g$ otherwise.% if $x\in[{n\over 2}+1,{3n\over 2}]$, and  $a'+2i\equiv a\pmod g$ if $x\in[{3n\over 2}+1,2n]$.
     \vspace{-0.2cm}
\end{itemize}
\end{itemize}
So, we obtain an $\mathrm{OF}(2n,g)$.
\qed

\begin{Proposition}\label{OF_g_odd_2}
There exists an $\mathrm{OF}(2n,g)$ for any odd $g\geq 3$ and even  $n\geq 2g$.
\end{Proposition}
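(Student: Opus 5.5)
I will construct the $\mathrm{OF}(2n,g)$ for odd $g\ge3$ and even $n\ge 2g$ by combining the one-factorization of $K_{2n}$ from the proof of Proposition~\ref{OF_g_even_2} with the handling of odd $g$ from Proposition~\ref{OF_g_odd_1}. I reuse $H_r$ $(r\in[n-1])$, built by doubling a one-factorization $\{F_1,\ldots,F_{n-1}\}$ of $K_n$, and the cross one-factors $I_r$ $(r\in[n])$. Together with $H_r=I_{r-n+1}$ for $r\in[n,2n-1]$ these give a one-factorization of $K_{2n}$. The target is $g(2n-1)$ one-factors. Since $g$ is odd, I take a near one-factorization $\{G_1,\ldots,G_g\}$ of $K_g$ with $i$ isolated in $G_i$, written as in (\ref{GGG}) with $\frac{g-1}{2}$ pairs $\{a_{i,w},b_{i,w}\}$.

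\textbf{Step 1 (pure edges inside the halves).} All edges $\{(x,a),(y,a)\}$ with $\{x,y\}$ in some $H_r$, $r\in[n-1]$, are absorbed by Strategy A: for $i\in[n-1]$ put $A_i=\bigcup_{a\in[g]}H_{i+a}(a,a)$ with subscripts mod $n-1$. This is valid since $g<n-1$. It gives $n-1$ one-factors.

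\textbf{Step 2 (mixed edges inside the halves, completed by cross edges).} For $j\in[n-1]\setminus\{\frac n2\}$ and $i\in[g]$, I mimic Step 3 of Proposition~\ref{OF_g_odd_1}. Put
\[
C_{i,j}=\bigcup_{w\in[\frac{g-1}{2}]}D'_{j+w}(a_{i,w},b_{i,w})\cup I_{\sigma(j)}(i,i),
\]
where $j+w$ runs cyclically inside $[\frac n2-1]$ or inside $[\frac n2+1,n-1]$, so property (B) holds because $\frac{g-1}{2}\le\frac n2-1$. Here $\sigma$ is an injection from $[n-1]\setminus\{\frac n2\}$ into the indices of cross factors, and $I_{\sigma(j)}(i,i)$ covers the layer $i$ left uncovered by $G_i$. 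The edges $D'_{n/2}\cup D'_n$, i.e.\ the pairs at difference $\frac n2$ inside each half, are not in any $D'_r$ with $r\neq\frac n2$. I therefore treat them like Proposition~\ref{OF_g_even_2}: they form the extra factor $I_{n+1}$, which I reserve as a further "cross-type" factor. This step yields $g(n-2)$ one-factors and uses $n-2$ of the $I$'s on their diagonal layers.

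\textbf{Step 3 (remaining cross-type edges).} Still missing are the pure edges on the $n+1$ factors $I_1,\ldots,I_n,I_{n+1}$ outside the $n-2$ diagonal layers already consumed, and every mixed edge on those factors. I split these as in Step 2 of Proposition~\ref{OF_g_odd_1}. For $i\in[g-1]$, use $\bigcup_{a\in[g]}I_{j+a}(a,a+i)$ on the factors $I_1,\ldots,I_{n/2}$, which go between the half-blocks in the bipartite way needed for property (A). On $I_{\frac n2+1},\ldots,I_{n+1}$, use the shifted arrays $M_i$ of Proposition~\ref{OF_g_even_2}, whose columns are permutations, to handle orientation. Pure edges on the remaining $I$'s get Strategy A factors $\bigcup_a I_{k+a}(a,a)$.

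\textbf{Main obstacle.} The delicate part is the bookkeeping for the pure edges $I_r(a,a)$. Each $C_{i,j}$ consumes exactly one pair $(r,a)=(\sigma(j),i)$. The leftover pure cross edges must then be packed into full one-factors with $g$ distinct symbols per cross factor, and the total count must come out to $g(2n-1)$. I would first try to choose $\sigma$ so that the consumed pairs $(\sigma(j),i)$ form whole "diagonals" $\{(k+a,a):a\in[g]\}$ for $n-2$ values of $k$, in cyclic order over the $n+1$ factors. The remaining diagonals are then exactly Strategy A factors. If this count fails because $n-2\not\equiv 0$ in the needed sense, I would fold the deficit into the $M_i$-factors by letting one $M$-array use $i=0$, as was done with $E_r$ in Lemma~\ref{OF_n_odd}. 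Finally, verify that every edge is covered at least once and invoke Lemma~\ref{d3}.
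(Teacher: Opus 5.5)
\paragraph{A gap in the pure cross edges.} Your Steps 1 and 2 and the mixed-edge part of Step 3 are sound and agree with the paper. The argument breaks at the point you flag as the main obstacle, and neither remedy you suggest works as written.

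Since $C_{i,j}$ uses $I_{\sigma(j)}(i,i)$ with $i$ running over all of $[g]$, the factors $C_{1,j},\dots,C_{g,j}$ consume every pure layer of the single cross factor $I_{\sigma(j)}$. So the consumed slots $(\sigma(j),i)$ are whole rows, never diagonals $\{(k+a,a):a\in[g]\}$, whatever injection $\sigma$ you pick.

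After your $(g-1)(n+1)$ mixed factors from Step 3 of Proposition~\ref{OF_g_even_2}, what remains is all $g$ pure layers of exactly two cross factors $I_{r_1},I_{r_2}$. That is $2ng$ edges supported on only $2n$ pairs of $K_{2n}$, and they would have to form two one-factors. But two edges of one one-factor lying on the same pair of parts are parallel, so each such one-factor needs $ng>2n$ distinct pairs. The packing is therefore impossible for $g\ge 3$. In particular $\bigcup_a I_{k+a}(a,a)$ is unavailable, because it needs free layers on $g$ distinct cross factors and only two have any.

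The ``$M_0$'' fallback fails for the same reason. The factor $\bigcup_a K^0_{a,j+a}=\bigcup_a I_{j+a}(a,a)$ needs free pure layers on $g$ distinct factors among $I_{n/2+1},\dots,I_{n+1}$, so it re-covers used edges. Note also that the pure edges on $I_{n+1}$ are not missing. Its pairs lie inside the two halves, hence in the $H_r$, and were already covered in your Step 1.

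\paragraph{How to close it.} The paper does not reserve Step 1 for the $H_r$ alone. It takes $\sigma=\mathrm{id}$ on $[n-1]\setminus\{\frac n2\}$, so the two cross factors left with free pure layers are $I_{n/2}$ and $I_n$. It then appends them to the Step 1 cycle as $H_n=I_{n/2}$ and $H_{n+1}=I_n$, with $A_i=\bigcup_{a\in[g]}H_{i+a}(a,a)$, subscripts mod $n+1$, $i\in[n+1]$. The count is $(n+1)+g(n-2)+(g-1)(n+1)=g(2n-1)$.

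Your diagonal idea can also be made to work, but only if the cross index depends on $i$:
\begin{itemize}
\item Choose distinct $k_j\in\mathbb{Z}_n$ for $j\in[n-1]\setminus\{\frac n2\}$, and put $I_{k_j+i}(i,i)$ into $C_{i,j}$. This is still a parallel-free one-factor, since the $D'$-part covers the layers $[g]\setminus\{i\}$ inside the halves and the $I$-part consists of cross pairs.
\item The free slots are then exactly the two diagonals indexed by the unused values $k',k''$.
\item These are covered by the Strategy A factors $\bigcup_{a\in[g]}I_{k'+a}(a,a)$ and $\bigcup_{a\in[g]}I_{k''+a}(a,a)$, which are valid because $g\le n$.
\end{itemize}
The count is $(n-1)+g(n-2)+2+(g-1)(n+1)=g(2n-1)$, as required.
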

\proof This is analogous to the proof of Proposition \ref{OF_g_even_2}.
Take a near one-factorization $\{{G}_{1},{G}_{2},\ldots,{G}_{g}\}$ of $K_{g}$, such that in ${G}_{i}$, $i\in[g]$, vertex $i$ is isolated. %$${G}_{i}=\big\{\{a_{i,1},b_{i,1}\},\{a_{i,2},b_{i,2}\},\ldots,\{a_{i,\frac{g-1}{2}},b_{i,\frac{g-1}{2}}\}\big\},\text{ } \text{where } a_{i,w}<b_{i,w}\ (i\in[g],w\in\left[\frac{g-1}{2}\right]).$$
Let
$${G}_{i}=\left\{\{a_{i,w},b_{i,w}\}:w\in\left[\frac{g-1}{2}\right],a_{i,w}<b_{i,w} \right\}.$$
%Assume w.l.o.g. that $a_{i,w}<b_{i,w}$ for $i\in[g]$ and $w\in[\frac{g-1}{2}]$.
Moreover, let $\{{F}_{1},{F}_{2},\ldots,{F}_{n-1}\}$ be a one-factorization of $K_{n}$. As in the proof of Proposition \ref{OF_g_even_2}, we obtain a one-factorization of $K_{2n}$, namely, $\{{H}_{r}:r \in [n-1]\}\bigcup\{{I}_{r}:r\in [n]\}$.
Then proceed in three steps.

\textbf{Step 1:} Let ${H}_{n}={I}_{\frac{n}{2}}$ and ${H}_{n+1}={I}_{n}$. Applying Strategy A with $s=g$ gives that
$${A}_{i} = \bigcup_{a \in[g]}{H}_{i+a}(a,a),i\in[n+1],$$
are $n+1$ one-factors of $K_{2n\times g}$ without parallel edges,
where $i+a \in [n+1]\pmod{n+1}$.
Now, we run out of all edges of the form
$\{(x,a),(y,a)\}$, where $\{x,y\}\in H_{r}$, $r\in[n+1]$, $a\in[g]$.

\textbf{Step 2:}
For any $r\in [n-1]\setminus\{\frac{n}{2}\}$, define ${D}'_{r}=\{(x,y),(x+n,y+n):(x,y)\in {D}_{r}\}$, where~${D}_{r}$ is defined in (\ref{E1}).
%Note that $(x,y)\in D'_{r_{1}}$ implies $(y,x)\in D'_{r_{2}}$, where $r_{1}+r_{2}=n$ with $r_{1},r_{2}\in [\frac{n}{2}-1] \bigcup[\frac{n}{2}+1,n-1]$. Then we partition the set $[\frac{n}{2}-1]\bigcup[\frac{n}{2}+1,n-1]$ into two parts $[\frac{n}{2}-1]$ and $[\frac{n}{2}+1,n-1]$.
For any $i\in [g]$, $j\in [n-1]\setminus\{\frac{n}{2}\}$, define

$${B}_{i,j}=\bigcup_{w \in [\frac{g-1}{2}]}{D}'_{j+w}(a_{i,w},b_{i,w})\bigcup{I}_{j}(i,i),$$
where $j+w\in [\frac{n}{2}-1]\pmod{\frac{n}{2}-1}$ if $j\in[\frac{n}{2}-1]$, and $j+w\in [\frac{n}{2}+1,n-1]\pmod{\frac{n}{2}-1}$ otherwise.
It can be checked that $\{{B}_{i,j}:i\in [g],j\in [n-1]\setminus\{\frac{n}{2}\}\}$ contains $g(n-2)$ one-factors of $K_{2n\times g}$ without parallel edges.
In this step, we run out of all edges of the form
$\{(x,a),(y,a)\}$ with $\{x,y\}\in I_{r}$, $r\in[n-1]\setminus\{\frac{n}{2}\}$, $a\in[g]$ and of the form $\{(x,a),(y,b)\}$ with $\{x,y\}\subseteq [n]$ or $\{x,y\}\in[n+1,2n]$ except $y-x\equiv\frac{n}{2}\pmod{n}$, $a\neq b\in[g]$.

\textbf{Step 3:} This step is completely the same with Step 3 in the proof of Proposition \ref{OF_g_even_2}. So we obtain $(g-1)(n+1)$ one-factors of $K_{2n\times g}$
 without parallel edges, namely, ${C}_{i,j},i\in[g-1],j\in[n+1]$,
%For any $i\in [g-1]$, $j\in [\frac{n}{2}]$, define
%$${C}_{i,j}=\bigcup_{a \in [g]}{I}_{j+a}(a,a+i), \text{ where } j+a \in [\frac{n}{2}]\pmod{\frac{n}{2}}.$$
%Denote ${I}_{n+1}=\left\{\{x,x+\frac{n}{2}\}:x \in[\frac{n}{2}]\bigcup[n+1, n+\frac{n}{2}]\right\}$.
%For any $r\in[\frac{n}{2}+1,n+1]$, $a\in[g]$, $i \in [g-1]$, let
%$${K}_{a,r}^{i}=\{\{(x,m_{a,x}^{(i)}),(y,m_{a,y}^{(i)})\}:\{x,y\}\in {I}_{r}\},$$ where $M_i=\big(m_{a,x}^{(i)}\big)_{g\times 2n}$ is defined in Step 3 of the proof of Proposition \ref{OF_g_even_2}.
%Then, for $i \in [g-1]$ and $j\in[\frac{n}{2}+1,n+1]$, define
%$${C}_{i,j}=\bigcup_{a\in[g]}{K}_{a,j+a}^{i},$$
%where $j+a\in[\frac{n}{2}+1,n+1]\pmod{\frac{n}{2}+1}$.
%Then  each $C_{i,j}$ is a one-factor of $K_{2n\times g}$ without parallel edges by Strategy A.

Finally, it is readily checked that  $$\big\{{A}_{i}:i\in [n+1]\big\}\bigcup \bigg\{{B}_{i,j}:i\in[g],j\in [n-1]\setminus\left\{\frac{n}{2}\right\}\bigg\}\bigcup \big\{{C}_{i,j}:i\in[g-1],j\in[n+1]\big\}$$
forms  an $\mathrm{OF}(2n,g)$. (See the final part in the proof of  Proposition \ref{OF_g_even_2} for similar details.)\qed

\subsection{Main theorem}

The constructions in Subsections 4.1-4.5 give rise to one-factorizations of $K_{n\times g}$ with distance three  for all even $gn$ with $n>g$, only leaving a small gap of $n$ to be resolved.
We state this main result in detail in a theorem.

\begin{Theorem}\label{results_OF} For one-factorizations of $K_{n\times g}$ with distance $d=3$, we have the following:
\begin{itemize}
 \item [$(1)$] For $g=2,3$, there exists an $\mathrm{OF}(n,g)$ if and only if $n> g$ and $gn$ is even.
\item [$(2)$] Let $g\ge 4$ be even. There exists an $\mathrm{OF}(n,g)$ if and only if $n> g$, possibly except odd $n$ with $g+3\leq n\leq 2g-3$ and even $n$ with
 $g+2\leq n\leq 4g-4$.
\item [$(3)$]
  Let $g\geq 5$ be odd. There exists an $\mathrm{OF}(n,g)$ if and only if $n>g$ is even, possibly except that $g+1\leq n \leq 2g-4$ if $n\equiv 2\pmod 4$  and that $g+1\leq n\leq 4g-4$ if  $n\equiv 0\pmod 4$.
\item [$(4)$]
If $g$ is an odd prime power, then an $\mathrm{OF}(n,g)$ exists if and only if $n>g$ is even, possibly except that $g+3\leq n \leq 2g-4$ if $n\equiv 2\pmod 4$ and that $g+3\leq n\leq 4g-4$ if $n\equiv 0\pmod 4$.
  %In particular, if $g$ is an odd prime power, then an $\mathrm{OF}(n,g)$ exists if and only if $n>g$ is even, possibly except that $g+3\leq n \leq 2g-4$ if $n\equiv 2\pmod 4$ and that $g+3\leq n\leq 4g-4$ if $n\equiv 0\pmod 4$.
\end{itemize}
\end{Theorem}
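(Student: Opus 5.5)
Necessity in every part is already given by Lemma~\ref{d3}: an $\mathrm{OF}(n,g)$ requires $n>g$ and $gn$ even, so for odd $g$ only even $n$ can occur. It remains to show sufficiency outside the stated exceptional ranges. We combine the direct constructions (Lemmas~\ref{OF_n_odd}, \ref{OF_n_even}, Propositions~\ref{OF_g_even_1}, \ref{OF_g_odd_1}, \ref{OF_g_even_2}, \ref{OF_g_odd_2}) with the doubling construction of Lemma~\ref{OF_double}. The product Lemma~\ref{OF-mul-construction} is only needed for small values.

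\emph{Part (2), $g\ge4$ even.} For odd $n$, Lemma~\ref{OF_n_odd} gives $n=g+1$, and Proposition~\ref{OF_g_even_1} gives every odd $n\ge\max\{2g-1,g+3\}=2g-1$. The only odd values left are $g+3\le n\le 2g-3$, which is exactly the stated exception. For even $n=2m$ we split according to the parity of $m$.
\begin{itemize}
\item If $m$ is even with $m\ge 2g$, Proposition~\ref{OF_g_even_2} applies directly, so $n\ge 4g$ is covered.
\item If $m$ is odd with $m>g$, then $\mathrm{OF}(m,g)$ exists by the odd case just settled, as long as $m=g+1$ or $m\ge 2g-1$. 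Lemma~\ref{OF_double} then gives $n=2g+2$ and all $n\equiv2\pmod4$ with $n\ge 4g-2$.
\item More generally, doubling any $\mathrm{OF}(m,g)$ with $m>g$ covers further even $n$.
\end{itemize}
Together these leave the even exceptions inside $g+2\le n\le 4g-4$, matching the statement. The bookkeeping needs care: $2g+2$ is covered, but the statement lists the whole interval as "possibly", so any covered values inside it are simply harmless.

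\emph{Parts (3) and (4), $g$ odd.} Here $n=2m$ is even. If $m$ is odd with $m\ge g$, Proposition~\ref{OF_g_odd_1} gives all $n\equiv2\pmod4$ with $n\ge 2g$. If $m$ is even with $m\ge 2g$, Proposition~\ref{OF_g_odd_2} gives all $n\equiv0\pmod4$ with $n\ge4g$. The lower ends then match the stated gaps: for $n\equiv2\pmod 4$ the exception is $g+1\le n\le 2g-4$ (since $2g-2\equiv 0\pmod 4$ when $g$ is odd), and for $n\equiv0\pmod4$ it is $g+1\le n\le 4g-4$. For part (4), Lemma~\ref{OF_n_even} additionally supplies $n=g+1$ when $g$ is an odd prime power, and this shifts both lower endpoints to $g+3$.

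\emph{Part (1), $g=2,3$.} For $g=2$, $n=3$ comes from Lemma~\ref{OF_n_odd}, all odd $n\ge5$ from Proposition~\ref{OF_g_even_1}, and even $n\ge 8$ from Proposition~\ref{OF_g_even_2}. The even values $n=4,6$ come from Example~\ref{OF_3_n_2} and from doubling $\mathrm{OF}(3,2)$. For $g=3$, $n=4$ comes from Lemma~\ref{OF_n_even}, $n\equiv2\pmod4$ with $n\ge6$ from Proposition~\ref{OF_g_odd_1}, $n\ge12$ with $4\mid n$ from Proposition~\ref{OF_g_odd_2}, and $n=8$ from doubling $\mathrm{OF}(4,3)$.

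The main obstacle is making sure the small cases in part (1) are all covered and that the interval endpoints in parts (2)--(4) are exactly right. Where Lemma~\ref{OF_double} leaves a residual small case, the first thing to try is the product Lemma~\ref{OF-mul-construction}.
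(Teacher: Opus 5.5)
Your plan is the same as the paper's. Necessity comes from Lemma~\ref{d3}. Sufficiency comes from Lemmas~\ref{OF_n_odd} and~\ref{OF_n_even}, Propositions~\ref{OF_g_even_1}--\ref{OF_g_odd_2}, and the doubling Lemma~\ref{OF_double}. The product construction is never needed, so you can drop the hedging about it.

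There is one concrete slip, in part~(1) for $g=2$. Proposition~\ref{OF_g_even_2} gives an $\mathrm{OF}(2m,2)$ only for \emph{even} $m\ge 4$, i.e.\ only for $n\equiv 0\pmod 4$ with $n\ge 8$. It does not give all even $n\ge 8$. Doubling $\mathrm{OF}(3,2)$ gives only $n=6$, so $n=10,14,18,\ldots$ are covered by nothing you cite. The repair is the device you already use in part~(2). Proposition~\ref{OF_g_even_1} gives an $\mathrm{OF}(m,2)$ for every odd $m\ge 5$. Lemma~\ref{OF_double}, whose Case~2 covers $g=2$, doubles these to every $n\equiv 2\pmod 4$ with $n\ge 10$.

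With that fixed, your bookkeeping is correct, and in two places it is more careful than the paper's written argument.
\begin{itemize}
\item In part~(2), the paper credits Proposition~\ref{OF_g_even_2} with all even $n\ge 4g$ and explicitly doubles only $\mathrm{OF}(2g-1,g)$. The values $n\equiv 2\pmod 4$ with $n\ge 4g+2$ genuinely require your doubling of every odd $m\ge 2g-1$.
\item For $g=2$, the paper's argument lists only odd $n$ and $n=4$. Your treatment of $n=6$ and of $n\equiv 0\pmod 4$, $n\ge 8$, supplies cases it leaves unstated.
\end{itemize}
Your endpoint checks in parts~(3) and~(4) agree with the paper: $2g-2\equiv 0\pmod 4$ for odd $g$, and the $\mathrm{OF}(g+1,g)$ from Lemma~\ref{OF_n_even} pushes both lower ends to $g+3$.
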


\proof By Lemma \ref{d3}, an $\mathrm{OF}(n,g)$ exists only if $ng$ is even and $n> g$. %Then we only need to show the sufficiency in the following proof.

By Proposition \ref{OF_g_even_1}, there exists an $\mathrm{OF}(n,2)$ for any odd $n$ with $n\ge 5$.
Combine with the $\mathrm{OF}(n,2)$ for $n=3,4$ in  Example \ref{OF_3_n_2} and Lemma \ref{OF_n_odd} to have that (1) holds for $g=2$.
An $\mathrm{OF}(n,3)$ exists for any $n\equiv 2\pmod 4$ with $n\geq 6$ by Proposition \ref{OF_g_odd_1}; an $\mathrm{OF}(n,3)$ exists for any  $n\equiv 0\pmod 4$ with $n\geq 12$ by Proposition~\ref{OF_g_odd_2}. Lemma \ref{OF_n_even} gives the existence of an $\mathrm{OF}(4,3)$. Thus we also have an $\mathrm{OF}(8,3)$ by Lemma \ref{OF_double}. It then follows that (1) also holds for $g=3$.

Let $g\geq 4$ be even. If $n$ is odd, then by Proposition \ref{OF_g_even_1}, there exists an $\mathrm{OF}(n,g)$ for any odd $n$ with $n\geq 2g-1$. Moreover, by Lemma \ref{OF_n_odd}, there exists an $\mathrm{OF}(g+1,g)$.
So there exists an $\mathrm{OF}(n,g)$ for any even $g\ge 4$ and odd $n>g$, possibly except for $g+3\leq n\leq 2g-3$. If $n$ is even, then by Proposition \ref{OF_g_even_2}, there exists an $\mathrm{OF}(n,g)$ for any  $n\geq4g$. By Proposition \ref{OF_g_even_1}, there exists an $\mathrm{OF}(2g-1,g)$. Applying the doubling construction in Lemma \ref{OF_double}, we obtain an $\mathrm{OF}(4g-2,g)$.
This concludes the proof of (2).
%To sum up, we arrive at the conclusion in part (2).

%Let $g$ be even. Firstly, let $n$ be odd. Then by Proposition \ref{OF_g_even_1}, there exists an $\mathrm{OF}(n,g)$ for any odd $n$ with $n\geq 2g-1$ if $g\geq 4$ or $n\ge 5$ if $g=2$. Moreover, there exists an $\mathrm{OF}(g+1,g)$ by Lemma \ref{OF_n_odd}. So there exists an $\mathrm{OF}(n,g)$ for any even $g\ge 4$ and odd $n>g$, possibly except those $n$ with $g+3\leq n\leq 2g-3$. Secondly,  let $n\equiv 2\pmod 4$. Apply the doubling construction in Lemma \ref{OF_double} with the known $\mathrm{OF}(n/2,g)$. Then we obtain an $\mathrm{OF}(n,g)$ for any $n\geq 4g-2$ if $g\ge 4$ and $n\ge 10$ if  $g=2$. Thirdly, let $n\equiv 0\pmod 4$. Then there exists an $\mathrm{OF}(n,g)$ for any  $n\geq4g$ by Proposition \ref{OF_g_even_2}. To sup up, we arrive at the conclusion in part (2).

If $g\geq 5$ is odd, then by Proposition \ref{OF_g_odd_1}, there exists an $\mathrm{OF}(n,g)$ for any $n\equiv 2\pmod 4$ with $n\geq 2g$.
By Proposition~\ref{OF_g_odd_2}, there exists an $\mathrm{OF}(n,g)$ for any  $n\equiv 0\pmod 4$ with $n\geq 4g$.
So there exists an $\mathrm{OF}(n,g)$ if and only if $n>g$ is even, possibly except when $g+1\leq n \leq 2g-4$ if $n\equiv 2\pmod 4$ and when $g+1\leq n\leq 4g-4$ if $n\equiv 0\pmod 4$. This completes the proof of (3).

If $g$ is an odd prime power, then by Lemma \ref{OF_n_even}, there exists an $\mathrm{OF}(g+1,g)$. Together with (3),  we complete the proof of (4).
\qed

%The following lists an $\mathrm{LGS}(1,2,4,3)$, each part is a $\mathrm{GS}(1,2,4,3)$ with the alphabet $\mathbb{Z}_{4}$.
%\begin{center}
%\begin{tabular}{l l l l l l l l }
%\{(1, 1), (2, 1)\}& \{(1, 2), (3, 1)\}& \{(1, 3), (4, 1)\}& \{(2, 2), (4, 2)\}& \{(2, 3), (3, 2)\}& \{(3, 3), (4, 3)\}\\
%\{(1, 1), (2, 2)\}& \{(1, 3), (3, 1)\}& \{(1, 2), (4, 1)\}& \{(2, 1), (4, 2)\}& \{(3, 2), (4, 3)\}& \{(2, 3), (3, 3)\}\\
%\{(1, 1), (2, 3)\}& \{(2, 1), (3, 1)\}& \{(2, 2), (4, 1)\}& \{(3, 2), (4, 2)\}& \{(1, 2), (3, 3)\}& \{(1, 3), (4, 3)\}\\
%\{(1, 1), (3, 1)\}& \{(2, 1), (3, 3)\}& \{(3, 2), (4, 1)\}& \{(1, 3), (4, 2)\}& \{(1, 2), (2, 2)\}& \{(2, 3), (4, 3)\}\\
%\{(1, 1), (3, 3)\}& \{(2, 1), (3, 2)\}& \{(3, 1), (4, 1)\}& \{(1, 2), (4, 2)\}& \{(2, 2), (4, 3)\}& \{(1, 3), (2, 3)\}\\
%\{(1, 1), (4, 3)\}& \{(1, 2), (2, 1)\}& \{(2, 2), (3, 1)\}& \{(1, 3), (3, 2)\}& \{(3, 3), (4, 1)\}& \{(2, 3), (4, 2)\}\\
%\{(1, 1), (4, 1)\}& \{(2, 1), (4, 3)\}& \{(3, 1), (4, 2)\}& \{(2, 2), (3, 2)\}& \{(1, 2), (2, 3)\}& \{(1, 3), (3, 3)\}\\
%\{(1, 1), (4, 2)\}& \{(1, 3), (2, 1)\}& \{(3, 1), (4, 3)\}& \{(1, 2), (3, 2)\}& \{(2, 2), (3, 3)\}& \{(2, 3), (4, 1)\}\\
%\{(1, 1), (3, 2)\}& \{(2, 1), (4, 1)\}& \{(3, 3), (4, 2)\}& \{(1, 2), (4, 3)\}& \{(1, 3), (2, 2)\}& \{(2, 3), (3, 1)\}\\
%\end{tabular}
%\end{center}
%\qed

\section{Concluding remarks}

Considering multipartite graphs through coding-theoretic lens,
we study one-factorizations of complete multipartite graphs subject to distance constraints in this paper. More generally, we investigate the problem of optimal decompositions of  $K_{n\times g}$ into the largest subgraphs with a fixed minimum distance. With respect to distances two and four, optimal decompositions  are explored with complete solutions provided in Theorems \ref{d2} and \ref{d4}. With respect to distance three,  Theorems \ref{ODAR_even_n} and~\ref{ODAR_odd_n} show that the complete multipartite graph $K_{n\times g}$ can be decomposed into~$g^2$ copies of the largest subgraphs for all $n\le g$. Theorem \ref{results_OF} shows that there are one-factorizations of $K_{n\times g}$  with distance three for all even $gn$ with $n>g$, only leaving a small gap of $n$ (in terms of~$g$) to be resolved.

%\begin{table}[h!]
%\setlength{\abovecaptionskip}{0cm}
%\setlength{\belowcaptionskip}{+0.2cm} % µ÷Õû±êÌâÏ·½µÄ¼ä¾à
% \begin{center}
 %  \caption{Summary of unresolved OF$(n,g)$ }
 %  \begin{tabular}{c|c|c|c|c}
  % \toprule[2pt]
    % \textbf{g} & \textbf{$n\equiv 0\pmod{4}$} & \textbf{$n\equiv 2\pmod{4}$}& \textbf{$n\equiv 1\pmod{4}$}& \textbf{$n\equiv 3\pmod{4}$}\\
     %\hline
     %4& \multicolumn{4}{c}{6,8,12}\\
     %\hline
     %5&\multicolumn{4}{c}{8,16}\\
     %\hline
     %6&\multicolumn{4}{c}{8,9,10,12,16,18,20}\\
     %\hline
     %7&\multicolumn{4}{c}{10,12,20,24}\\
     %\hline
     %$g\geq 6$ is even&\multicolumn{2}{c|}{$g+2\leq n\leq 4g-4$}&\multicolumn{2}{c}{$g+3\leq n\leq 2g-3$}\\
     %\hline
     %$g\geq 7$ is odd&\multicolumn{4}{c}{10,12,20,24}\\
     %\toprule[2pt]
   %\end{tabular}
 %\end{center}
%\end{table}

To fill in the gaps that we leave in Theorem \ref{results_OF}, for a fixed $g$,  $\mathrm{OF}(n,g)$s with small primes~$n$ may be constructed directly. Then it is promising to get all missing $\mathrm{OF}(n,g)$s by applying the doubling construction and product construction in Lemmas \ref{OF_double} and \ref{OF-mul-construction}, especially for relatively small $g$. However, systematic approaches are expected to be developed to handle all remaining cases. We observed  a large literature concerning factorizations of various hypergraphs since the seminal work of Baranyai \cite{Baranyai}, see for instance \cite{Bahmanian2, Bahmanian,  Amalgamations}. It is worthwhile to consider how this sort of approach works again if a distance constraint is imposed.
 To extend the work of this paper, we put forward the problem of one-factorizations of complete multipartite  hypergraphs with distance constraints and the problem of decomposing multipartite  hypergraphs into the largest sub-hypergraphs with a fixed minimum distance.
%Our method owes as much to design theory as it does to graph theory and coding theory.

%\clearpage

%\appendix
%\section{An  in Lemma \ref{LGS12n2}}
%{\noindent }

\clearpage

\end{document}